\documentclass{amsart}
\usepackage{mathtools}
\usepackage{enumitem}
\usepackage{lscape}

\usepackage[backend=biber,citestyle=alphabetic,bibstyle=alphabetic,maxbibnames=99]{biblatex}
\addbibresource{BakMit2018Refs.arXiv.bib}


\makeatletter
\renewcommand\section{\@startsection {section}{1}{\z@}%
                                   {3.5ex \@plus 1ex \@minus .2ex}%
                                   {-1em}%
                                   {\normalfont\large\bfseries}}
\makeatother

\numberwithin{equation}{section}

\renewbibmacro*{doi+eprint+url}{%
  \iftoggle{bbx:doi}
    {\iffieldundef{url}{\printfield{doi}}{}}
    {}%
  \newunit\newblock
  \iftoggle{bbx:eprint}
    {\usebibmacro{eprint}}
    {}%
  \newunit\newblock
  \iftoggle{bbx:url}
    {\usebibmacro{url+urldate}}
    {}}
    
\AtEveryBibitem{%
  \ifentrytype{book}{%
  \clearfield{pages}%
  \clearfield{url}%
  }{%
  }%
}

\newtheorem{thm}{Theorem}[section]
\newtheorem{cor}[thm]{Corollary}
\newtheorem{lem}[thm]{Lemma}
\newtheorem{prop}[thm]{Proposition}
\newtheorem{fact}[thm]{Fact}
{\theoremstyle{definition} \newtheorem{defn}[thm]{Definition} }
{}
{\theoremstyle{remark} \newtheorem{smallrem}[thm]{Remark}}
{\theoremstyle{remark} }

\newtheorem{bigclm}[thm]{Claim}
{\theoremstyle{remark} }

\newtheorem{keythm}{Theorem}



\DeclarePairedDelimiter{\bracepair}{\lbrace}{\rbrace}

\DeclarePairedDelimiter{\parenpair}{(}{)}
\DeclarePairedDelimiter{\vertpair}{\vert}{\vert}
\DeclarePairedDelimiter{\Vertpair}{\Vert}{\Vert}
\newcommand{\abs}[1]{\vertpair*{#1}}

\makeatletter
\newcommand{\raisemath}[1]{\mathpalette{\raisem@th{#1}}}
\newcommand{\raisem@th}[3]{\raisebox{#1}{$#2#3$}}
\makeatother

\newcommand{\declare}{\equiv}

\newcommand{\bigo}[1]{O \left( #1 \right)}

\newcommand{\bigoinv}[1]{O \left( \frac{1}{#1} \right)}

\newcommand{\card}[1]{\# \left( #1 \right)}


\renewcommand{\Re}{\operatorname{Re}}
\renewcommand{\Im}{\operatorname{Im}}

 \newcommand{\RR}{\mathbb{R}}
\newcommand{\CC}{\mathbb{C}} 
 
\newcommand{\NN}{\mathbb{N}} 
 
\newcommand{\Nz}{\mathbb{N}_{0}}

\newcommand{\gotP}{\mathcal{P}}
\newcommand{\gotQ}{\mathcal{Q}}
\newcommand{\gotR}{\mathcal{R}}
\newcommand{\gotS}{\mathcal{S}}
\newcommand{\gotT}{\mathcal{T}}
\newcommand{\gotU}{\mathcal{U}}

%


\newcommand{\leibder}[3]{\frac{d^{#2}}{{d#1}^{#2}} \left( #3 \right)}
\newcommand{\leibshort}[3]{\frac{d^{#2}{#3}}{{d#1}^{#2}}}

\newcommand{\leiboneat}[3]{\left. \frac{d {#2}}{{d#1}} \right\vert_{#1 = #3}}
\newcommand{\deloneat}[3]{\left. \frac{\partial}{\partial #1} \left( #2 \right) \right\vert_{#1 = #3}}




\DeclareMathOperator{\unbdddom}{\mathfrak{D}}
\newcommand{\opdom}[1]{\unbdddom \left( #1 \right)}
\DeclareMathOperator{\spectry}{Sp}
\newcommand{\spec}{\spectry}

\DeclareMathOperator{\wron}{Wr}
\newcommand{\wronsk}[3]{\wron \left[ #1, #2 \right] \left( #3 \right)}

\newcommand{\sobh}[2]{\mathcal{H}^{#2}\parenpair{#1}}
\newcommand{\sobw}[3]{\mathcal{W}^{#2, #3}\parenpair{#1}}

\newcommand{\ltwopair}[3]{\parenpair*{#1, #2}_{L^2(#3)}}
\newcommand{\ltwonorm}[2]{\Vertpair*{#1}_{L^2(#2)}}




\DeclareFontFamily{U}{mathx}{\hyphenchar\font45}
\DeclareFontShape{U}{mathx}{m}{n}{
      <5> <6> <7> <8> <9> <10>
      <10.95> <12> <14.4> <17.28> <20.74> <24.88>
      mathx10
      }{}
\DeclareSymbolFont{mathx}{U}{mathx}{m}{n}
\DeclareFontSubstitution{U}{mathx}{m}{n}
\DeclareMathAccent{\widecheck}{0}{mathx}{"71}
\DeclareMathAccent{\wideparen}{0}{mathx}{"75}



\newcommand{\pointmass}[1]{\delta \left( #1 \right)}



\newcommand{\honat}{L_{\text{HO}}^{0}}

\newcommand{\hopert}[2]{L_{\text{HO}}\parenpair*{#1, #2}}

\newcommand{\pcnpert}[2]{L_{\text{PC}}\parenpair*{#1, #2}}

\newcommand{\ourform}[2]{\mathfrak{t}_{#1, #2}}

\newcommand{\ourformlong}[4]{\mathfrak{t}_{#1, #2} \left( #3, #4 \right) }

\newcommand{\evensol}[2]{y_{\textnormal{even}}(#1; #2)}
\newcommand{\oddsol}[2]{y_{\textnormal{odd}}(#1; #2)}

\newcommand{\nuD}[1]{\mathsf{D}(#1)}
\newcommand{\nuP}[1]{\Phi(#1)}
\newcommand{\nuM}[1]{\mathsf{M}(#1)}

\makeatletter
\def\nuaseq#1{\expandafter\@nuaseq\csname c@#1\endcsname}
\def\@nuaseq#1{%
    \ifcase#1\or $\nu$\or $a$\else\@ctrerr\fi}
\def\anuseq#1{\expandafter\@anuseq\csname c@#1\endcsname}
\def\@anuseq#1{%
    \ifcase#1\or $a$\or $\nu$\else\@ctrerr\fi}
\def\eoseq#1{\expandafter\@nuaseq\csname c@#1\endcsname}
\def\@eoseq#1{%
    \ifcase#1\or e\or o\else\@ctrerr\fi}
\makeatother
%
{%
   \begin{subequations}
   
}
{%
    \end{subequations}\ignorespacesafterend
}
{%
   \begin{subequations}
   
}
{%
    \end{subequations}\ignorespacesafterend
}
{%
   \begin{subequations}
   
}
{%
    \end{subequations}\ignorespacesafterend
}
\newenvironment{romansub}%
{%
   \begin{subequations}
   
}
{%
    \end{subequations}\ignorespacesafterend
}

\title[Harmonic Oscillator Perturbation: Nonreal Eigenvalues]{Non-real eigenvalues of the Harmonic Oscillator perturbed by an odd, two-point \texorpdfstring{$\delta$}{delta}-potential}
\author{Charles Baker and Boris Mityagin}
\date{\today}

\begin{document}

\begin{abstract}
In this paper, we consider the perturbations of the Harmonic Oscillator Operator by an odd pair of point interactions: $z (\pointmass{x - b} -  \pointmass{x + b})$.  We study the spectrum by analyzing a convenient formula for the eigenvalue.  We conclude that if $z = ir$, $r$ real, as $r \to \infty$, the number of non-real eigenvalues tends to infinity.
\end{abstract}

\maketitle
\everymath{\displaystyle}

\section{Introduction}  
\subsection{Perturbations of the Harmonic Oscillator} We consider the \emph{harmonic oscillator operator},

\begin{subequations} \label{eq:honatdef}
\begin{align}
\honat y(x) &= - y^{\prime \prime}(x) + x^2 y(x), \label{eq:honatEQN}\\
\opdom{\honat} & \declare \bracepair{y(x) \in  \sobh{\RR}{2}: x^2 y(x) \in L^2(\RR)}.
\end{align}
\end{subequations} 
($\sobh{\RR}{k} = \sobw{\RR}{k}{2}$ is a Sobolev space, and we allow $y^{\prime \prime}$ to be a distributional derivative.)  The operator is of compact resolvent, and the spectrum is the positive odd integers:
\[
\spec(\honat) = 2 \Nz + 1, \quad \Nz  \declare \NN \cup \bracepair{0}.
\]
(For an operator $T$,  $\spec(T)$ denotes its spectrum.)  The eigenfunction with eigenvalue $2n + 1$, $n \in \Nz$, is the $n$th \emph{Hermite function},
\begin{equation} \label{eq:hermitefcndef}
h_n(x) \declare \frac{1}{\sqrt{2^n n! \sqrt{\pi}}} e^{-x^2/2} H_n(x),
\end{equation}
where 
\begin{equation} \label{eq:hermitepolydef}
H_n(x) \declare (-1)^n e^{x^2} \leibder{x}{n}{e^{-x^2}}.
\end{equation}
is the $n$th \textit{Hermite Polynomial}.  We now consider a perturbation of the harmonic oscillator operator,
\begin{equation}
\hopert{z}{b} y(x) = \honat y(x) + z (\pointmass{x - b} - \pointmass{x + b}) y(x),  \quad b  > 0, z \in \CC.\label{eq:hopertEQN}
\end{equation}
The operator's construction is discussed in \cite{MiSiArt} (following \cite[Chapter VI]{Kato}), where it is noted that the operator has compact resolvent \cite[p. 5]{MiSiArt}.  In particular, the operator is defined by the quadratic-form method, for the quadratic form $\mathfrak{t}^{\textnormal{HO}}_{z, b}$ with domain $\opdom{\mathfrak{t}^{\textnormal{HO}}_{z, b}}  = \bracepair{f(x) \in \sobh{\RR}{1}: x f(x) \in L^2(\RR)}$ by
\[
\begin{split}
\mathfrak{t}^{\textnormal{HO}}_{z, b}(f(x), g(x)) & = \ltwopair{f^{\prime}(x)}{g^{\prime}(x)}{\RR} + \ltwopair{x f(x)}{x g(x)}{\RR} \\
& \quad + z \left( f(b) \overline{g(b)} -  f(-b) \overline{g(-b)} \right).
\end{split}
\]
In particular, for $f \in \opdom{\hopert{z}{b}} \subset \opdom{\mathfrak{t}^{\textnormal{HO}}_{z, b}}$ and $g \in \opdom{\mathfrak{t}^{\textnormal{HO}}_{z, b}}$,
\[
\ltwopair{\hopert{z}{b} f(x)}{g(x)}{\RR} = \mathfrak{t}^{\textnormal{HO}}_{z, b} (f(x), g(x)).
\]
This perturbation was studied in \cite{MiSiArt},  \cite{HCWArt},  \cite{MitPub2015}, and \cite{MitPub2016}, and a similar operator was studied in \cite{Demi05b}.  In particular, \cite{HCWArt} and \cite{Demi05b} gave numerical evidence that when $z = ir$, non-real eigenvalues exist for large enough $r$, and \cite{MitPub2015} proved that for $z = ir$, the number of non-real eigenvalues was bounded above by $M ( \abs{r} \log (e \abs{r}))^2$.  But how does the number of non-real eigenvalues change as $r$ increases?  We show --- and this is a main result of our paper --- that $\mathcal{N}_{\textnormal{HO}}(r) = \card{ \spec(\hopert{ir}{b}) \setminus \RR }$ increases to $\infty$.  
\begin{keythm} \label{thm:infinityHO}
For any fixed $b > 0$, 
\begin{equation} \label{eq:TheTheorem} 
\lim_{r \to \infty} \mathcal{N}_\textnormal{HO}(r) = \infty.
\end{equation}
\end{keythm}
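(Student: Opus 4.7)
The plan is to reduce the eigenvalue problem for $\hopert{ir}{b}$ to a single scalar equation whose roots are the eigenvalues, then to bound the count of real roots from above while bounding the total count from below by an amount that grows with $r$. Since $\hopert{ir}{b}$ differs from $\honat$ by a (form-sense) rank-two perturbation supported at $\pm b$, any eigenfunction with $\lambda \notin \spec(\honat)$ is a linear combination $f(x) = \alpha\, G_{0}(x, b;\lambda) - \beta\, G_{0}(x, -b; \lambda)$ of translates of the resolvent kernel $G_{0}$ of $\honat$. Imposing the consistency conditions $\alpha = -ir\, f(b)$, $\beta = -ir\, f(-b)$ at $x = \pm b$ and diagonalizing the resulting $2\times 2$ system by parity (using $G_{0}(x,y;\lambda) = G_{0}(-x,-y;\lambda)$) produces the scalar equation
\[
r^{2}\, D_{e}(\lambda)\, D_{o}(\lambda) \;=\; -1,
\]
with
\[
D_{e}(\lambda) = 2\sum_{k=0}^{\infty}\frac{h_{2k}(b)^{2}}{(4k+1)-\lambda}, \qquad D_{o}(\lambda) = 2\sum_{k=0}^{\infty}\frac{h_{2k+1}(b)^{2}}{(4k+3)-\lambda}.
\]
Thus $\phi(\lambda) := D_{e}(\lambda) D_{o}(\lambda)$ is meromorphic with poles at the odd integers $\geq 1$ (generically, assuming no $h_{n}(b)$ vanishes), and $\mathcal{N}_{\textnormal{HO}}(r)$ is the count of non-real zeros of $F_{r}(\lambda) := r^{2}\phi(\lambda) + 1$.

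For the upper bound on real zeros, I would exploit that each of $D_{e}, D_{o}$ is a sum of strictly $\lambda$-increasing terms and so is monotone increasing on every open interval between its own consecutive poles. A short analysis of the sign pattern of $\phi$ on each interval $(2n-1, 2n+1)$ should cap the number of real solutions of $\phi(\lambda) = -r^{-2}$ in $[-R, R]$ by $R + O(1)$, uniformly in $r$ --- intuitively, the real eigenvalues cannot be denser than the unperturbed spectrum.

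For the lower bound on the total count, I would apply the argument principle to $F_{r}$ on a thin rectangle $\{|\Re\lambda| \le R(r),\, |\Im\lambda| \le \gamma(r)\}$, using the parabolic-cylinder (WKB) representation of the half-line Neumann/Dirichlet Green's functions at $x = b$ that give $D_{e}, D_{o}$ in closed form. For scales $R(r)\to\infty$ and $\gamma(r)\to 0$ chosen in a compatible way, the winding count should strictly exceed the real-axis upper bound by a quantity $\mathcal{N}_{*}(r)\to\infty$. Since $F_{r}$ takes real values on $\RR$, any excess zeros must appear in complex-conjugate pairs off the real axis, which yields $\mathcal{N}_{\textnormal{HO}}(r)\to\infty$.

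The main obstacle is this last step: producing WKB asymptotics for $\phi(\lambda)$ that are \emph{uniform} on the chosen contour and precise enough to separate the total count from the real-axis count. The turning point $\Re\lambda \approx b^{2}$, where the WKB expansion degenerates, and the $O(\sqrt{R})$ increasing density of poles of $\phi$ on $\RR$ both constrain the admissible scaling of $R(r)$ and $\gamma(r)$; striking a balance that makes the argument count computable and strictly larger than the real-axis count is the core technical challenge.
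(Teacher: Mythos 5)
Your reduction to a scalar equation is sound and is essentially a spectral-expansion version of the paper's own eigenvalue condition: the paper works with $M(\nu;b)=\sqrt{2/\pi}\,\Gamma(-\nu)D_{\nu}^{2}(b)\,\evensol{\nu}{b}\,\oddsol{\nu}{b}=1/z^{2}$, and your $\phi(\lambda)=D_{e}(\lambda)D_{o}(\lambda)$ is (up to the change of variables $\lambda=2\nu+1$ and constants) the same meromorphic function written through the Green's kernel of $\honat$. The problem is what comes after. Your entire lower bound rests on the assertion that an argument-principle count of zeros of $F_{r}(\lambda)=r^{2}\phi(\lambda)+1$ on a thin rectangle, with uniform WKB asymptotics, ``should strictly exceed the real-axis upper bound by a quantity $\mathcal{N}_{*}(r)\to\infty$.'' That is not a proof step; it is a restatement of the theorem, and you yourself flag it as the unresolved core. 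Nothing in the proposal identifies a mechanism that forces eigenvalues off the real axis, and without one the bookkeeping cannot close: for fixed $r$ all but finitely many eigenvalues are real, so the total count and the real count in your rectangle differ only by the finite quantity you are trying to estimate, and a crude ``$R+O(1)$ versus winding number'' comparison (itself delicate, since $\phi$ is a product of two monotone functions, not monotone, between poles) gives no growth in $r$.

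The missing idea, which is the heart of the paper's argument, is to localize at the zeros \emph{in the parameter} of the factor $D_{\nu}(b)$ (equivalently, of your $\phi$). The paper first proves there are infinitely many such zeros (an entire-function argument: $D_{\nu}(b)$ has order at most $1$ in $\nu$ but decays like $\exp(-\tfrac{\xi}{2}\log\xi+\ldots)$ as $\nu=-\xi\to-\infty$, which is incompatible with having only finitely many zeros). Then, crucially, it exploits the self-adjoint case $z=r\in\RR$: since all eigenvalues are then real, each zero $\lambda$ of $\mathsf{D}$ must be real, simple, and the local expansion $M(\nu)=c_{2}(\nu-\lambda)^{2}+O((\nu-\lambda)^{3})$ must have $c_{2}>0$ (otherwise real coupling would already produce non-real eigenvalues). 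Only then does the switch to $z=ir$ pay off: the local equation $c_{2}(\nu-\lambda)^{2}(1+g)= -1/r^{2}$ with $c_{2}>0$ forces two \emph{non-real} solutions within any prescribed $\rho$-neighborhood of $\lambda$ once $r$ is large, and these are genuine eigenvalues by the eigenvalue condition (with a separate treatment when $\lambda$ is a nonnegative integer, where $M$ must be replaced by its analytic continuation). Summing over the first $p$ zeros gives $\mathcal{N}_{\textnormal{PC}}(r)\ge 2p$ for $r$ large, hence the theorem. Your framework could in principle be completed along the same lines (study the zeros of $\phi$ on $\RR$, show they are double zeros with positive leading coefficient via the self-adjoint comparison, then perturb), but as written the proposal leaves precisely that step--the only genuinely hard one--unproved.
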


Our approach is based on finding a function $M(\nu, b)$ (meromorphic in $\nu$, holomorphic in $b$) such that $\nu \in \CC \setminus \Nz$ is an eigenvalue of (the transformation of) $\hopert{z}{b}$~\ref{sec:eigenCond} for the derivations),
\begin{equation} \label{eq:maineqnIntro}
\begin{split}
M(\nu; b) &= \frac{1}{z^2}, \quad \text{ where }\\
M(\nu; b) &\declare \sqrt{\frac{2}{\pi}}  \Gamma(-\nu) D_{\nu}^2(b) \evensol{\nu}{b} \oddsol{\nu}{b}.
\end{split}
\end{equation}  
The zeros of $M(\nu, b)$ \emph{in the parameter} $\nu$, in particular of its factor $D_{\nu}(b)$, become important to find and analyze the asymptotics of the eigenvalues for $\abs{z}$ large.  We observe that there are infinitely many zeros of $\nuD{\nu} = D_{\nu}(b)$ (see Section~\ref{sec:zeroCount}), where we streamline some arguments with the work of F. W. Olver (\cite{Olver59b}, \cite{Olver61}), with tighter results on the growth rates of $\nuD{\nu}$ than necessary.  Around each zero $\lambda$ of $\nuD{\nu}$, for large $\abs{z}$, we find solutions of \eqref{eq:maineqnIntro} in a small neighborhood of $\lambda$, non-real for $z$ imaginary (see Sections~\ref{sec:nuNotInt} and \ref{sec:nuInt}); this is an important step in the completion of the proof (see Section~\ref{sec:Unify}).  
\subsection{Change of Variables} \label{subsect:changevar} We make a change of variables, since the \emph{Weber differential equation}, written in the form
\begin{equation} \label{eq:WeberEqnIntro}
- \leibshort{x}{2}{y} + \left( \frac{x^2}{4} - \frac{1}{2} \right) y(x) = \nu y(x), \, \, x \in \CC, \, \, \nu \in \CC.
\end{equation}
has its general solution far more studied than the the notation above,
\[
- \leibshort{x}{2}{y} + x^2 y(x) = \lambda y(x).
\] 
We define (with $z \in \CC$, $b > 0$)
\begin{equation} \label{eq:pcnpertdef}
\pcnpert{z}{b} y(x) = - y^{\prime \prime}(x) + \left( \frac{x^2}{4} - \frac{1}{2} \right) y(x) + z \left[ \pointmass{x - b} - \pointmass{x + b} \right] y(x).
\end{equation} 
The corresponding quadratic form, $\ourform{z}{b}$, has the same domain, $\opdom{\ourform{z}{b}} \declare \bracepair{f(x) \in \sobh{\RR}{1}: x f(x) \in L^2(\RR)}$, and is defined by
\begin{equation} \label{eq:ourformdefOne}
\begin{split}
\ourformlong{z}{b}{f}{g} &\declare \ltwopair{f^{\prime}(x)}{g^{\prime}(x)}{\RR} + \frac{1}{4}\ltwopair{xf(x)}{xg(x)}{\RR} - \frac{1}{2}\ltwopair{f(x)}{g(x)}{\RR} \\
& \quad + z \left( f(b) \overline{g(b)} - z f(-b) \overline{g(-b)} \right)\\
\end{split} 
\end{equation}
and again, for $f \in \opdom{\pcnpert{z}{b}} \subset \opdom{\ourform{z}{b}}$ and $g \in \opdom{\ourform{z}{b}}$,
\[
\ltwopair{\pcnpert{z}{b} f(x)}{g(x)}{\RR} =  \ourformlong{z}{b}{f}{g}.
\]

One may check that if $S x = x \sqrt{2}$ is the dilation on the real line, and $Tf = f \circ S$ is the corresponding operator on $L^2(\RR)$, we have that 
\begin{equation} \label{eq:finalconvert}
\pcnpert{z}{b} = \frac{1}{2} T^{-1} \circ \hopert{z \sqrt{2}}{\frac{b}{\sqrt{2}}} \circ T - \frac{1}{2} I,
\end{equation}
and
\begin{equation} \label{eq:finalspec}
\spec (\pcnpert{z}{b}) = \frac{\spec \hopert{z \sqrt{2}}{\frac{b}{\sqrt{2}}} - 1}{2}.
\end{equation}
Hence, defining $\mathcal{N}_{\textnormal{PC}}(r) =\card{ \spec(\pcnpert{ir}{b}) \setminus \RR }$, \eqref{eq:TheTheorem} is equivalent to the claim that $\lim_{r \to \infty} \mathcal{N}_{\textnormal{PC}}(r) = \infty$.  

\section{Reciprocal Gamma Function} \label{sec:prelimGam}
Following complex-analysis convention (e.g., \cite[p. 27]{LevinMainOriginal}), we define the \emph{entire} function
\[
\begin{split}
\frac{1}{\Gamma(\zeta)} &= e^{\gamma\zeta} \prod_{n = 1}^{\infty} \left( 1 + \frac{\zeta}{n} \right) e^{\zeta/n}\\
\gamma & = \lim_{n \to\infty} \left[ \left( 1 + \frac{1}{2} + \dotsb + \frac{1}{n} \right) - \log(n + 1) \right].
\end{split}
\]
and its multiplicative inverse is the usual Gamma function, a meromorphic function with poles at the nonpositive integers.  In particular, for $n$ a nonnegative integer, $\frac{1}{\Gamma(-n)} = 0$.  The \emph{Stirling approximation} for the gamma function yields the estimate \cite[Chap.1, Sec. 11, p. 27]{LevinMainOriginal}, with $\abs{z} = r$, and outside of circles of fixed width about the points in $- \Nz$,
\begin{equation} \label{eq:Stir}
\ln \Gamma(z) = \left( z - \frac{1}{2}\right) \ln z  - z + \frac{1}{2} \ln (2 \pi) + \bigoinv{r},
\end{equation}
therefore
\[
\ln \left( M_{1/\Gamma}(r)\right)  : = \ln \left( \sup_{\abs{z} = r} \abs{ \frac{1}{\Gamma(z)}} \right)\sim r \log r, \quad r \to \infty.
\]
(In this text, $f(x) \sim g(x)$ as $x \to +\infty$ means $\lim_{x \to \infty} \frac{f(x)}{g(x)} = 1$.)
Some properties of the Gamma function on the positive real line are as follows.  
\begin{fact} \label{fact:GamIncr}
On $(0, \infty)$, $\log \Gamma(x)$ is positive and convex, and $\Gamma(1) = \Gamma(2) = 1$; hence, for $x$ real and positive, $\Gamma(x)$ has a unique minimum in $[1, 2]$.  In particular, $\Gamma(x)$ is increasing on $[2, \infty)$.
\end{fact}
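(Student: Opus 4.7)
The plan is to verify each piece of the fact using the integral representation $\Gamma(x) = \int_0^\infty t^{x-1} e^{-t}\,dt$, valid for $x > 0$. Positivity of $\Gamma(x)$ is immediate from the integrand. The value $\Gamma(1) = \int_0^\infty e^{-t}\,dt = 1$ is a direct computation, and $\Gamma(2) = 1$ follows from the functional equation $\Gamma(x+1) = x\Gamma(x)$ obtained by integration by parts.

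For the convexity of $\log\Gamma$, I would apply H\"older's inequality with conjugate exponents $1/\lambda$ and $1/(1-\lambda)$: for $\lambda \in (0,1)$ and $x, y > 0$,
\[
\Gamma(\lambda x + (1-\lambda)y) = \int_0^\infty \bigl(t^{x-1}e^{-t}\bigr)^{\lambda} \bigl(t^{y-1}e^{-t}\bigr)^{1-\lambda}\, dt \leq \Gamma(x)^{\lambda} \Gamma(y)^{1-\lambda},
\]
so that $\log\Gamma$ is convex; strict convexity when $x \neq y$ comes from the equality condition in H\"older (the integrands $t^{x-1}e^{-t}$ and $t^{y-1}e^{-t}$ are not proportional). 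As an alternative, one could differentiate the Weierstrass product to get $(\log\Gamma)''(x) = \sum_{n=0}^\infty (x+n)^{-2} > 0$.

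The remaining conclusions follow from elementary properties of strictly convex functions combined with the boundary values $\log\Gamma(1) = \log\Gamma(2) = 0$. Strict convexity forces $\log\Gamma$ to dip strictly below $0$ somewhere in $(1,2)$ and to attain a unique minimum there. Outside this interval, the convexity inequality runs the other way: for $x > 2$, writing $2$ as a convex combination of $1$ and $x$ gives $0 = \log\Gamma(2) < \frac{x-2}{x-1}\log\Gamma(1) + \frac{1}{x-1}\log\Gamma(x) = \frac{1}{x-1}\log\Gamma(x)$, so $\log\Gamma(x) > 0$; symmetrically for $x < 1$. Hence the global minimum of $\Gamma$ on $(0,\infty)$ lies in $[1,2]$ and is unique. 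For monotonicity on $[2,\infty)$, strict convexity makes $(\log\Gamma)'$ strictly increasing, and since $\log\Gamma$ has already risen from its minimum in $(1,2)$ back to $0$ at $x = 2$, one has $(\log\Gamma)'(2) > 0$, so $\log\Gamma$ and thus $\Gamma$ is strictly increasing on $[2,\infty)$.

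I do not anticipate serious obstacles; the argument is a textbook application of H\"older plus convex-function manipulations. The only point worth flagging is that the phrase "$\log\Gamma(x)$ is positive" in the statement is mildly ambiguous, since $\log\Gamma$ is negative on $(1,2)$ where $\Gamma < 1$; I would read it as asserting that $\Gamma(x)$ itself is positive on $(0,\infty)$, which is what the subsequent conclusions actually rely on.
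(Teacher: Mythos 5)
Your proof is correct. The paper offers no argument of its own for this fact --- it simply cites Conway and Olver --- and your H\"older-based log-convexity argument (equivalently, $(\log\Gamma)''(x)=\sum_{n\ge 0}(x+n)^{-2}>0$ from the Weierstrass product), combined with the endpoint values $\log\Gamma(1)=\log\Gamma(2)=0$ and the standard convexity manipulations, is exactly the textbook proof those references contain; your reading of ``positive'' as referring to $\Gamma(x)>0$ rather than $\log\Gamma(x)>0$ is also the intended one.
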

See , e.g., \cite[Section 7.7, p. 179]{Conway} and \cite[Section 2.2, Exercise 2.4, p. 39]{OlverMain}. \vspace{2 ex} 

\begin{fact} \label{fact:doubler} For $2w \not\in -\Nz$,
\begin{equation} \label{eq:doubler}
\Gamma(2 w) = \frac{2^{2w - 1}}{\sqrt{\pi}} \Gamma(w) \Gamma\left(w + \frac{1}{2} \right)
\end{equation}
\end{fact}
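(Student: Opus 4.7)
My plan is to prove the duplication formula first on a half-plane via a Beta-function computation, and then extend it to the claimed domain by analytic continuation.

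First, I would restrict attention to $\Re w > 0$, where $\Gamma(w)$, $\Gamma(w + 1/2)$, and $\Gamma(2w)$ all admit their Euler integral representations. Starting from the Beta integral
\[
B(w, w) = \int_0^1 t^{w-1}(1-t)^{w-1}\,dt = \frac{\Gamma(w)^2}{\Gamma(2w)},
\]
I substitute $t = (1+s)/2$ to exploit the symmetry of the integrand about $t = 1/2$, obtaining
\[
B(w, w) = 2^{1-2w}\int_{-1}^{1}(1-s^2)^{w-1}\,ds = 2^{2-2w}\int_0^1 (1-s^2)^{w-1}\,ds.
\]
Then the change of variable $u = s^2$ converts the right side into another Beta integral:
\[
2^{2-2w}\cdot \tfrac{1}{2}\int_0^1 u^{-1/2}(1-u)^{w-1}\,du = 2^{1-2w} B\!\left(\tfrac{1}{2}, w\right) = 2^{1-2w}\,\frac{\sqrt{\pi}\,\Gamma(w)}{\Gamma(w + 1/2)},
\]
using $\Gamma(1/2) = \sqrt{\pi}$. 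Equating the two expressions for $B(w,w)$ and solving for $\Gamma(2w)$ yields the duplication formula on $\Re w > 0$.

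Second, I would upgrade this to the full domain $2w \notin -\NN_0$. Both sides of \eqref{eq:doubler} are meromorphic functions of $w \in \CC$: the left side has simple poles exactly at $2w \in -\NN_0$, while $\Gamma(w)\Gamma(w + 1/2)$ has simple poles exactly at $w \in -\NN_0$ and $w \in -\NN_0 - 1/2$, which together again give $2w \in -\NN_0$. Since the two meromorphic functions agree on the non-empty open set $\{\Re w > 0\}$, by the identity theorem they coincide on their common domain of holomorphy, namely $\{w \in \CC : 2w \notin -\NN_0\}$.

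No step here is really an obstacle; the only subtlety worth double-checking is the substitution $u = s^2$, where $du = 2s\,ds$ so $ds = \tfrac{1}{2}u^{-1/2}\,du$, which produces the factor of $1/2$ and the exponent $-1/2$ on $u$. Everything else is bookkeeping, and the analytic-continuation step is standard.
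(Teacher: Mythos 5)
Your proof is correct. A point of comparison worth noting: the paper does not prove Fact~3.2 at all --- it simply cites Olver's book for the Legendre duplication formula --- so your argument is strictly more self-contained than what appears in the text. Your computation checks out: with $t=(1+s)/2$ one has $t(1-t)=(1-s^2)/4$ and $dt=ds/2$, giving $B(w,w)=2^{1-2w}\int_{-1}^{1}(1-s^2)^{w-1}\,ds$, and the substitution $u=s^2$ correctly produces $\tfrac12 u^{-1/2}\,du$, so equating the two Beta evaluations and cancelling the nonvanishing factor $\Gamma(w)$ yields \eqref{eq:doubler} for $\Re w>0$. The continuation step is also sound: the set $\{w:2w\notin-\Nz\}$ is the plane minus a discrete set, hence open and connected, both sides are holomorphic there, and agreement on the right half-plane forces agreement everywhere by the identity theorem (and the poles of the two sides match on the excluded set, as you observe). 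Nothing is missing.
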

See, e.g., \cite[Chapter 2, Section 1, (1.08), p. 35]{OlverMain}. \vspace{2 ex}

\begin{fact}  \label{fact:GamInt} If $\Re \zeta > 0$, $\mu > 0$, and $\Re z > 0$, 
\begin{equation} \label{eq:keybound}
\int_0^{\infty} \exp (- z t^{\mu}) t^{\zeta - 1} \, dt = \frac{1}{\mu} \Gamma \left( \frac{\zeta}{\mu} \right) \frac{1}{z^{\zeta/\mu}},
\end{equation}
where the implicit logarithms use the principal branch of the logarithm.
\end{fact}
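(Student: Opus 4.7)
\medskip

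The plan is to reduce the stated identity to the classical Laplace/Gamma integral $\int_0^\infty e^{-zs} s^{\eta - 1} \, ds = \Gamma(\eta)/z^{\eta}$ (valid for $\Re \eta > 0$, $\Re z > 0$), and then handle the complex parameter $z$ by analytic continuation in $z$ from the positive real axis.

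First, I would perform the real substitution $s = t^{\mu}$, which gives $dt = \frac{1}{\mu} s^{1/\mu - 1} \, ds$ and $t^{\zeta - 1} = s^{(\zeta - 1)/\mu}$. Since the substitution is on the positive real axis and $\mu > 0$, no branch issues arise, and the integral is transformed to
\[
\int_0^\infty \exp(-z t^{\mu}) \, t^{\zeta - 1} \, dt \; = \; \frac{1}{\mu}\int_0^{\infty} e^{-zs} s^{\eta - 1} \, ds, \qquad \eta \declare \zeta/\mu.
\]
Note $\Re \eta > 0$ follows from $\Re \zeta > 0$ and $\mu > 0$. So it suffices to establish
\[
F(z) \declare \int_0^\infty e^{-zs} s^{\eta - 1} \, ds \; = \; \frac{\Gamma(\eta)}{z^{\eta}} \qquad (\Re z > 0).
\]

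Next, I would verify that $F(z)$ is well-defined and holomorphic on the right half-plane. Absolute convergence is immediate since $|e^{-zs}s^{\eta - 1}| = e^{-(\Re z) s} s^{\Re\eta - 1}$, which is integrable near $0$ because $\Re \eta > 0$ and integrable near $\infty$ because $\Re z > 0$; uniform absolute convergence on compact subsets of $\{\Re z > 0\}$, plus holomorphy in $z$ of the integrand, yields that $F$ is holomorphic on $\{\Re z > 0\}$ by the standard Morera-plus-Fubini argument (or, equivalently, by differentiation under the integral sign). Likewise $z \mapsto \Gamma(\eta)/z^\eta$, with $z^\eta = \exp(\eta \log z)$ using the principal branch, is holomorphic on $\{\Re z > 0\}$.

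Then, for $z = x > 0$ real, the elementary substitution $u = xs$ (all quantities real and positive, so no branch issues) gives
\[
F(x) = \int_0^\infty e^{-u}\left(\frac{u}{x}\right)^{\eta - 1} \frac{du}{x} = \frac{1}{x^\eta}\int_0^\infty e^{-u} u^{\eta - 1} \, du = \frac{\Gamma(\eta)}{x^\eta},
\]
which agrees with $\Gamma(\eta)/z^\eta$ on the positive real axis. Since both sides of the desired identity are holomorphic on the connected open set $\{\Re z > 0\}$ and agree on the accumulation set $(0, \infty)$, the identity theorem forces agreement throughout.

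The only genuinely nontrivial step is the one we have sidestepped, namely the complex change of variable $u = zs$ (which would formally give the answer in one line, but requires a contour rotation onto the ray $\arg s = -\arg z$ with a Cauchy-theorem estimate on a quarter-arc at infinity). By instead appealing to analytic continuation, this obstacle is avoided entirely; the real-variable substitution suffices, and the branch statement in the hypothesis is automatically respected because we have defined $z^\eta$ via the principal logarithm, which is the unique holomorphic extension from $(0,\infty)$ to $\{\Re z > 0\}$.
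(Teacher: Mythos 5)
Your argument is correct. Note that the paper does not prove this fact at all --- it simply cites it as classical (Olver's book, Chapter 2, Section 1, Exercise 1.1), so there is no ``paper proof'' to compare against; you have supplied the standard argument that the reference leaves to the reader. Your route --- reduce to $\int_0^\infty e^{-zs}s^{\eta-1}\,ds$ by the real substitution $s=t^\mu$, check absolute and locally uniform convergence so that both sides are holomorphic in $z$ on the right half-plane, evaluate on the positive real axis where the integral is the Gamma function itself, and conclude by the identity theorem --- is complete, and it correctly handles the one delicate point, namely the branch of $z^{\zeta/\mu}$, since the principal power is the unique holomorphic extension from $(0,\infty)$. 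The alternative you mention (the complex substitution $u=zs$ justified by rotating the contour onto the ray $\arg s=-\arg z$ with a Cauchy-theorem estimate on the arcs) is the other standard route and gives the same conclusion; your analytic-continuation version trades that contour estimate for a routine Morera--Fubini holomorphy check, which is a perfectly sound exchange.
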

See, e.g., \cite[Chapter 2, Section 1, Exer. 1.1., p. 38]{OlverMain}.

\section{The Weber Differential Equation and its Solutions} \label{sec:prelimWeber} \subsection{Notation}  The Weber differential equation can be written in \emph{either} of the forms 
\begin{romansub}
\begin{align} 
- \leibshort{x}{2}{y} + \left( \frac{x^2}{4} - \frac{1}{2} \right) y(x) &= \nu y(x), \quad x \in \CC, \quad \nu \in \CC, \label{eq:ourWeber}\\
- \leibshort{x}{2}{y} + \left( \frac{1}{4} x^2 + a \right) y(x) &= 0, \quad x \in \CC, \quad a \in \CC. \label{eq:theirWeber}
\end{align}\label{eq:introWeber}
\end{romansub}
These notations are equivalent under the rule
\begin{equation}
a = - \nu - \frac{1}{2} \label{eq:anuequiv}
\end{equation}
and \eqref{eq:anuequiv} is assumed throughout the rest of the paper.  We will use \eqref{eq:ourWeber}, as it is the choice of coordinates used by the relevant references \cite{Demi05b} and \cite{MOS}, and because of a clearer connection to the harmonic oscillator.  We mention \eqref{eq:theirWeber} because of the frequent use of this form in the literature (e.g., \cite[Section 6.6]{OlverMain},  \cite{DLMFParab}, and \cite{Dean66}).  

The solutions of \eqref{eq:introWeber} are called \emph{parabolic cylinder functions}.  We now discuss some particular solutions.
\subsection{Solutions decaying as \texorpdfstring{$x \to \infty$}{x to infinity}: \texorpdfstring{$D_{\nu}(x)$ or $U(a, x)$}{D sub nu (x) or U(a, x)}}  \label{subsect:ourSoln} One solution of \eqref{eq:ourWeber}, denoted $D_{\nu}(x)$, is a solution of \eqref{eq:ourWeber} that decays as $x \to + \infty$; more precisely (e.g., \cite[Section 12.9(i), (12.9.1)]{DLMFParab})
\[
D_{\nu}(x) \sim x^{\nu} e^{-x^2/4}, \quad x \to + \infty.
\]
$D_{\nu}(x)$ may also be characterized by the values,
\begin{equation} \label{eq:dnvalues}
D_{\nu}(0) = \frac{2^{ \nu/2} \sqrt{\pi}}{\Gamma \left(- \frac{\nu}{2} + \frac{1}{2}\right)}, \quad \deloneat{x}{D_{\nu}(x)}{0} = - \frac{2^{(\nu + 1)/2 }\sqrt{\pi}}{\Gamma \left( - \frac{\nu}{2} \right)} ,
\end{equation}
the connection being derivable from the integral formulas (e.g., \cite[Section 8.1.4, p. 328]{MOS}),
\begin{equation} \label{eq:dnuintegrals}
D_{\nu}(x) = \begin{cases} \frac{ e^{ - x^2/4}}{\Gamma(-\nu)} \int_0^{\infty} t^{-\nu - 1} e^{-t^2/2  - xt} \, dt, & \Re \nu < 0,\\
\sqrt{\frac{2}{\pi}} e^{x^2/4} \int_0^{\infty} e^{-t^2/2} \cos \left( \frac{\pi \nu}{2} - xt \right) t^{\nu} \, dt, & \Re \nu > -1. \end{cases}
\end{equation}
As the coefficients of the differential equation \eqref{eq:ourWeber} are jointly continuous in $x$ and $\nu$, and holomorphic in each variable separately, and since the initial conditions are homomorphic in $\nu$, a standard continuation-of-parameters result (e.g., \cite[Section 5.3, Thm. 3.2, p. 146]{OlverMain}) ensures that for each $x$, $D_{\nu}(x)$ is holomorphic in $\nu$.

If the Weber parabolic cylinder equation is written in $a$-notation, i.e. \eqref{eq:theirWeber}, then the function named $D_{\nu}(x)$ in $\nu$-notation is denoted $U(a, x)$. 

In the sequel, we will abuse language and call $D_{\nu}(x) = U(a, x)$ ``the'' parabolic cylinder function.  

\subsection{Transformations of the parabolic cylinder function}  Certain transformations of the parabolic cylinder function still satisfy the Weber differential equation \eqref{eq:ourWeber} (see, e.g.,\cite[Section 8.1.1, p. 324, and Section 8.1.3, p. 327]{MOS}).  
\begin{fact} \label{fact:sols} For the differential equation
\begin{equation} \label{eq:WeberLoc}
- \leibshort{x}{2}{y} + \left( \frac{x^2}{4} - \frac{1}{2} \right) y(x) = \nu y(x), \, \, x \in \CC, \, \, \nu \in \CC,
\end{equation}
solutions include $D_{\nu}(x)$, $D_{\nu}(-x)$, $D_{- \nu - 1}(ix)$, and $D_{- \nu - 1}(-ix)$.  Some Wronskians include:
\begin{subequations} \label{eq:wrons}
\begin{align}
\wron [D_{\nu}(x), D_{\nu}(-x)] &= \frac{\sqrt{\pi}}{\Gamma(- \nu)}, \label{eq:pmwron} \\
\wron [D_{\nu}(x), D_{- \nu - 1}(\pm ix)] & = \exp \left( \mp i \frac{\pi}{2} (\nu + 1) \right) \label{eq:rotatewron}.
\end{align}
\end{subequations}
\end{fact}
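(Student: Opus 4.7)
The plan is to proceed in two stages: first verify that each of the four listed functions solves \eqref{eq:WeberLoc}, then compute the two Wronskians by exploiting that, since \eqref{eq:WeberLoc} has no first-derivative term, Abel's identity forces $\wron[u, v](x)$ to be constant in $x$ whenever $u, v$ are solutions.

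For the solution claim, $D_{\nu}(x)$ is a solution by its defining construction (Subsection \ref{subsect:ourSoln}). Since the differential operator and potential in \eqref{eq:WeberLoc} are even in $x$, the equation is invariant under $x \mapsto -x$, which gives $D_{\nu}(-x)$. For $y(x) = D_{-\nu-1}(\pm ix)$, I would substitute $w = \pm ix$ into \eqref{eq:WeberLoc} and use the chain rule: $y''(x) = -f''(w)$ and $x^2/4 = -w^2/4$ force $f = D_{-\nu-1}$ to satisfy the Weber equation with parameter $-\nu - 1$ in the variable $w$, and indeed this is precisely the equation $D_{-\nu - 1}$ solves.

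For the Wronskians I would evaluate the (constant) function $\wron[u, v]$ at the convenient point $x = 0$ using the explicit values \eqref{eq:dnvalues}. For \eqref{eq:pmwron}, taking $g(x) = D_{\nu}(-x)$ gives $g'(0) = -D_{\nu}'(0)$, hence $\wron[D_{\nu}(x), D_{\nu}(-x)](0) = -2 D_{\nu}(0) D_{\nu}'(0)$; after substituting \eqref{eq:dnvalues}, the product $\Gamma(-\nu/2) \Gamma(\tfrac{1 - \nu}{2})$ collapses to a multiple of $\Gamma(-\nu)$ via the duplication identity of Fact \ref{fact:doubler}. For \eqref{eq:rotatewron}, the analogous evaluation gives
\[
\wron [D_{\nu}(x), D_{- \nu - 1}(\pm ix)](0) = \pm i\, D_{\nu}(0)\, D_{-\nu - 1}'(0) - D_{\nu}'(0)\, D_{-\nu - 1}(0);
\]
plugging in \eqref{eq:dnvalues} and applying the reflection identity $\Gamma(z)\Gamma(1 - z) = \pi/\sin(\pi z)$ to each of the two resulting gamma products turns the expression into a linear combination of $\cos(\pi \nu/2)$ and $i \sin(\pi \nu/2)$, which one recognizes as $e^{\mp i \pi (\nu + 1)/2}$.

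The main obstacle is bookkeeping: selecting the correct $\Gamma$-identity for each Wronskian (duplication for the first, reflection for the second), and then in the second case combining the two terms into a single complex exponential while tracking the $\pm$ sign consistently. As a consistency check one can repeat the second computation asymptotically as $x \to +\infty$, where $D_{\nu}(x) \sim x^{\nu} e^{-x^2/4}$ and $D_{-\nu - 1}(\pm ix) \sim (\pm i x)^{-\nu - 1} e^{x^2/4}$ yield the same $(\pm i)^{-\nu - 1}$ after the Gaussian factors cancel, with no gamma functions involved.
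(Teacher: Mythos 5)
The paper offers no proof of this Fact at all (it is quoted from \cite{MOS}), so your task is really to supply a self-contained verification, and your architecture is the right one: constancy of the Wronskian (no first-derivative term, as in Remark~\ref{rem:constWronsk}), evaluation at $x=0$ via \eqref{eq:dnvalues}, duplication (Fact~\ref{fact:doubler}) for the first Wronskian and reflection for the second. The solution claim is fine (for $y(x)=D_{-\nu-1}(\pm ix)$ the point is that the sign flips of $y''$ and of $x^2/4$ under $w=\pm ix$ convert the parameter $-\nu-1$ back into $\nu$, i.e.\ the implication runs from the known equation for $D_{-\nu-1}$ to the claimed one for $y$, not ``forcing'' anything about $D_{-\nu-1}$), and your computation of \eqref{eq:rotatewron} does close: $\pm i\,D_\nu(0)D_{-\nu-1}'(0)-D_\nu'(0)D_{-\nu-1}(0)=\mp i\cos(\pi\nu/2)-\sin(\pi\nu/2)=\exp\left(\mp i\frac{\pi}{2}(\nu+1)\right)$, and your large-$x$ cross-check giving $(\pm i)^{-\nu-1}$ is consistent.

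The one genuine gap is that for \eqref{eq:pmwron} you stop at ``collapses to a multiple of $\Gamma(-\nu)$'' and never pin down the constant --- and if you do finish it, you will not get the constant stated in the paper. Explicitly, $-2D_\nu(0)D_\nu'(0)=\frac{2^{\nu+3/2}\pi}{\Gamma(-\nu/2)\,\Gamma\left(\frac{1-\nu}{2}\right)}$, and Fact~\ref{fact:doubler} with $w=-\nu/2$ gives $\Gamma(-\nu/2)\Gamma\left(\frac{1-\nu}{2}\right)=2^{\nu+1}\sqrt{\pi}\,\Gamma(-\nu)$, so the Wronskian equals $\frac{\sqrt{2\pi}}{\Gamma(-\nu)}$ rather than $\frac{\sqrt{\pi}}{\Gamma(-\nu)}$. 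A spot check at $\nu=-1$, where $D_{-1}(0)=\sqrt{\pi/2}$ and $D_{-1}'(0)=-1$ by \eqref{eq:dnvalues}, gives the value $\sqrt{2\pi}$ while $\Gamma(-\nu)=\Gamma(1)=1$, confirming that $\sqrt{2\pi}$ is correct and that \eqref{eq:pmwron} as printed is off by a factor $\sqrt{2}$ (harmless for the paper, since only \eqref{eq:rotatewron} and the even/odd Wronskian are used downstream, but your proof should state the corrected constant rather than silently ``matching'' the displayed one).
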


\begin{smallrem} \label{rem:constWronsk}
Since \eqref{eq:WeberLoc} has no $\frac{dy}{dx}$ term, the Wronskians of any two of its solutions are constant functions.  
\end{smallrem}

\subsection{The even and odd solutions}  \label{subsect:evenOdd} We now present standard \emph{even} and \emph{odd} solutions to \eqref{eq:ourWeber}, given by (e.g., \cite[Chap. 5, Exercise 3.5, pp. 147--148]{OlverMain})
\begin{romansub} 
\begin{align} 
\evensol{\nu}{x} & \declare e^{-x^2/4} \left[ 1 + \left( -\nu \right) \frac{x^2}{2!} + \left( - \nu \right)\left( - \nu + 2\right) \frac{x^4}{4!} + \dotsb \right] \label{eq:yEvenDirect} \\
\intertext{and}
\oddsol{\nu}{x} & \declare e^{-x^2/4} \left[ x + \left( - \nu  + 1 \right) \frac{x^3}{3!} + \left( - \nu  + 1\right)\left( - \nu + 3\right) \frac{x^5}{5!} + \dotsb \right]. \label{eq:yOddDirect}
\end{align} \label{eq:yParityDirect}
\end{romansub}
For future reference, we note that from \eqref{eq:yEvenDirect} and \eqref{eq:yOddDirect}, one sees that
\begin{romansub} \label{eq:ZeroIC}
\begin{align}
\evensol{\nu}{0} & = 1, && \deloneat{x}{\evensol{\nu}{x}}{0} = 0, \label{eq:yEvenZeroIC}\\
\oddsol{\nu}{0} & = 0, && \deloneat{x}{\oddsol{\nu}{x}}{0} = 1, \label{eq:yOddZeroIC}
\end{align}
\end{romansub}
so these new solutions are also holomorphic in $x$ and $\nu$.  

By \eqref{eq:ZeroIC} and \eqref{eq:dnvalues}, we may write $D_{\nu}(x)$ in terms of the even and odd solutions (e.g., \cite[Section 12.4]{DLMFParab}):
\begin{equation} \label{eq:dnudecomp}
\begin{split}
D_{\nu}(x) & =  D_{\nu}(0) \evensol{\nu}{x} +  \deloneat{x}{D_{\nu}(x)}{0} \oddsol{\nu}{x}\\
& = \frac{2^{ \nu/2} \sqrt{\pi}}{\Gamma \left(- \frac{\nu}{2} + \frac{1}{2}\right)} \evensol{\nu}{x} - \frac{2^{(\nu + 1)/2 }\sqrt{\pi}}{\Gamma \left( - \frac{\nu}{2} \right)} \oddsol{\nu}{x}.
\end{split}
\end{equation} 

\subsection{(Non)interference of the zeros of different solutions}

In the sequel, we  fix $x$ and discuss the zeroes of $D_{\nu}(x)$ \emph{in the parameter} $\nu$, and argue to what extent the zeroes \emph{in the parameter} of $\evensol{\nu}{x}$ and $\oddsol{\nu}{x}$ do or do not interfere.

\begin{lem} \label{lem:zerointerference} Fix $x_0 \in \CC$.\\
\begin{enumerate}[label = (\alph*).]  
\item If $\nu \not\in \Nz$, and $D_{\nu}(x_0) = 0$, then $\evensol{\nu}{x_0} \neq 0$ and $\oddsol{\nu}{x_0} \neq 0$.\\
\item if $n \in \Nz$, and $D_{n}(x_0) = 0$, then exactly one of $\evensol{n}{x_0}$, $\oddsol{n}{x_0}$ is $0$.
\end{enumerate}
\end{lem}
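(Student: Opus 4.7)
The plan is to leverage the decomposition \eqref{eq:dnudecomp}, the parity of $\evensol{\nu}{\cdot}$ and $\oddsol{\nu}{\cdot}$, and the Wronskian identity \eqref{eq:pmwron}.  First I would observe that since $\evensol{\nu}{x}$ is even and $\oddsol{\nu}{x}$ is odd in $x$, substituting $-x$ for $x$ in \eqref{eq:dnudecomp} gives
\[
D_{\nu}(-x) = D_{\nu}(0)\,\evensol{\nu}{x} - \deloneat{x}{D_{\nu}(x)}{0}\,\oddsol{\nu}{x},
\]
so that combining with \eqref{eq:dnudecomp} produces the inversion formulas
\[
2\,D_{\nu}(0)\,\evensol{\nu}{x} = D_{\nu}(x) + D_{\nu}(-x), \qquad 2\,\deloneat{x}{D_{\nu}(x)}{0}\,\oddsol{\nu}{x} = D_{\nu}(x) - D_{\nu}(-x).
\]
Thus, modulo knowing the coefficients $D_{\nu}(0)$ and $D_{\nu}^{\prime}(0)$ are nonzero, the vanishing of $\evensol{\nu}{x_0}$ or $\oddsol{\nu}{x_0}$ is controlled by the values of $D_{\nu}(\pm x_0)$.

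For part (a), I would first check via \eqref{eq:dnvalues} that when $\nu \notin \Nz$, both $D_{\nu}(0) \neq 0$ and $\deloneat{x}{D_{\nu}(x)}{0} \neq 0$: the reciprocal Gamma factors vanish only when their arguments lie in $-\Nz$, which forces $\nu$ to be a positive odd integer or a nonnegative even integer respectively, in either case a value excluded by hypothesis.  Next, I would evaluate the Wronskian identity \eqref{eq:pmwron} at $x_0$; since $D_{\nu}(x_0) = 0$, it collapses to $- D_{\nu}^{\prime}(x_0)\,D_{\nu}(-x_0) = \sqrt{\pi}/\Gamma(-\nu)$, and the right-hand side is nonzero because $-\nu \notin -\Nz$.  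Hence $D_{\nu}(-x_0) \neq 0$, and the inversion formulas above immediately yield $\evensol{\nu}{x_0} \neq 0$ and $\oddsol{\nu}{x_0} \neq 0$.

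For part (b), a dual inspection of \eqref{eq:dnvalues} shows that for $n \in \Nz$ exactly one of $D_n(0)$, $D_n^{\prime}(0)$ vanishes: $D_n(0) = 0$ when $n$ is odd and $D_n^{\prime}(0) = 0$ when $n$ is even.  Reading \eqref{eq:dnudecomp} back with one coefficient zero and the other nonzero, $D_n$ is a nonzero scalar multiple of $\evensol{n}{\cdot}$ (if $n$ is even) or of $\oddsol{n}{\cdot}$ (if $n$ is odd), so $D_n(x_0) = 0$ forces exactly one of $\evensol{n}{x_0}$, $\oddsol{n}{x_0}$ to equal $0$.  Finally, the other cannot vanish because $\wron[\evensol{n}{\cdot}, \oddsol{n}{\cdot}]$ is a constant function (Remark~\ref{rem:constWronsk}) whose value at $0$, computed from the initial data \eqref{eq:ZeroIC}, is $1$; hence $\evensol{n}$ and $\oddsol{n}$ never vanish simultaneously.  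The only conceptual step is recognizing the $x \mapsto -x$ symmetry of the Weber equation; everything else is bookkeeping with the Gamma-function formulas \eqref{eq:dnvalues} and the Wronskian \eqref{eq:pmwron}.
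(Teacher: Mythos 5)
Your proof is correct. For part (b) you do essentially what the paper does: read off from \eqref{eq:dnvalues} that for $n\in\Nz$ exactly one of $D_n(0)$, $\deloneat{x}{D_{n}(x)}{0}$ vanishes, conclude that $D_n$ is a nonzero scalar multiple of $\evensol{n}{\cdot}$ or of $\oddsol{n}{\cdot}$, and rule out simultaneous vanishing by the constant Wronskian $\wronsk{\evensol{n}{x}}{\oddsol{n}{x}}{0}=1$ from \eqref{eq:ZeroIC}. For part (a), however, your route genuinely differs from the paper's. The paper stays inside the basis $\bracepair{\evensol{\nu}{x},\oddsol{\nu}{x}}$ and argues by contradiction: if, say, $\evensol{\nu}{x_0}=0$, then solving \eqref{eq:dnudecomp} for the odd part (legitimate because the coefficient $-2^{(\nu+1)/2}\sqrt{\pi}/\Gamma(-\nu/2)$ is nonzero for $\nu\notin\Nz$) forces $\oddsol{\nu}{x_0}=0$ as well, contradicting that the even/odd Wronskian is identically $1$. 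You instead symmetrize: using the parity of the even and odd solutions you invert \eqref{eq:dnudecomp} into $2D_{\nu}(0)\,\evensol{\nu}{x}=D_{\nu}(x)+D_{\nu}(-x)$ and $2\deloneat{x}{D_{\nu}(x)}{0}\,\oddsol{\nu}{x}=D_{\nu}(x)-D_{\nu}(-x)$, verify from \eqref{eq:dnvalues} that both coefficients are nonzero off $\Nz$, and then use the Wronskian \eqref{eq:pmwron} of $D_{\nu}(x)$ and $D_{\nu}(-x)$, whose value $\sqrt{\pi}/\Gamma(-\nu)$ is nonzero precisely because $\nu\notin\Nz$, to conclude $D_{\nu}(-x_0)\neq 0$ and hence that both $\evensol{\nu}{x_0}$ and $\oddsol{\nu}{x_0}$ are nonzero. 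Both arguments are sound and short; the paper's is marginally more self-contained, since it needs only the trivially computed Wronskian of the even/odd pair and no identity involving $\Gamma(-\nu)$, whereas yours additionally delivers the facts $D_{\nu}(-x_0)\neq 0$ and $\deloneat{x}{D_{\nu}(x)}{x_0}\neq 0$, which are mildly more informative but not needed elsewhere in the paper.
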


\begin{proof}[Proof, Part (a)]  Fix $\nu, x_0 \in \CC$, $\nu \not\in \Nz$, such that $D_{\nu}(x_0) = 0$.  If $\evensol{\nu}{x_0} = 0$, then by \eqref{eq:dnudecomp},
\[
\begin{split}
\oddsol{\nu}{x_0} &= \frac{\Gamma \left( - \frac{\nu}{2} \right)}{2^{(\nu + 1)/2 }\sqrt{\pi}} \left(  \frac{2^{ \nu/2} \sqrt{\pi}}{\Gamma \left(- \frac{\nu}{2} + \frac{1}{2}\right)} \evensol{\nu}{x_0} - D_{\nu}(x_0) \right)\\
& = \frac{\Gamma \left( - \frac{\nu}{2} \right)}{2^{(\nu + 1)/2 }\sqrt{\pi}} \left(  \frac{2^{ \nu/2} \sqrt{\pi}}{\Gamma \left(- \frac{\nu}{2} + \frac{1}{2}\right)} \cdot  0 -0 \right) = 0. 
\end{split} 
\]
Hence, the Wronskian of $\evensol{\nu}{x}$ and $\oddsol{\nu}{x}$ would be $0$ at $x = x_0$.  Yet by \eqref{eq:ZeroIC},
\[
\wronsk{y_{\text{even}}}{y_{\text{odd}}}{0} = 1 \cdot 1 - 0 \cdot 0 = 1,
\]
and hence the Wronskian cannot vanish anywhere in $\CC$; in particular, $\evensol{\nu}{x_0}$ and $\oddsol{\nu}{x_0}$ can never be simultaneously 0.  Contradiction.  Similarly if $\oddsol{\nu}{x_0} = 0$.
\end{proof}
\begin{proof}[Proof, Part (b)]  Note that if $n = 2k$ is a nonnegative \emph{even} integer, then by \eqref{eq:dnudecomp},
\[
\begin{split}
D_{2k}(x) &= \frac{2^{k} \sqrt{\pi}}{\Gamma \left(- k + \frac{1}{2}\right)} \evensol{2k}{x} - \frac{2^{k + (1/2) }\sqrt{\pi}}{\Gamma \left( - k\right)} \oddsol{2k}{x} \\
& = \frac{2^{k} \sqrt{\pi}}{\Gamma \left(- k + \frac{1}{2}\right)} \evensol{2k}{x} - 0,
\end{split}
\]
so $D_{2k}(x)$ is a nonzero multiple of $\evensol{2k}{x}$, and these functions have the same zeroes.  The proof works similarly if $n = 2k + 1$ is a positive odd integer.  
\end{proof}

\section{Eigenvalue conditions}  \label{sec:eigenCond}
\subsection{\texorpdfstring{$L^2(\RR)$}{L2(R)} solutions}  We return to finding the eigenvalues of 
\begin{equation} \label{eq:pcnpertrepeat}
\pcnpert{z}{b} y(x) = - y^{\prime \prime}(x) + \left( \frac{x^2}{4} - \frac{1}{2} \right) y(x) + z \left[ \pointmass{x - b} - \pointmass{x + b} \right] y(x), \quad z \in \CC, b > 0
\end{equation}
\begin{fact}[Folklore] \label{fact:jumpconds} Fix $b > 0$, $z \in \CC$, and $y(x) \in \sobh{\RR}{1}$.  Then the conditions
\begin{enumerate}[label = (\roman*)]
\item $y(x) \in \opdom{\pcnpert{z}{b}}$, and 
\item $y(x)$ is an eigenfunction of \eqref{eq:pcnpertrepeat} with eigenvalue $\nu$, i.e.,
\[
\pcnpert{z}{b} y(x) = \nu y(x),
\]
\end{enumerate}
hold if and only if $y(x)$ is a $C^{\infty}$ solution of \eqref{eq:WeberLoc} on $(b,\infty)$, $(-b, b)$, and $(-\infty, -b)$, and we have the following jumps in the first derivative
\begin{equation} \label{eq:jumpconds}
\begin{split}
y^{\prime}(-b+) - y^{\prime}(-b-) & = - z y(-b),\\
y^{\prime}(b+) - y^{\prime}(b-) & = z y(b).
\end{split}
\end{equation}
\end{fact}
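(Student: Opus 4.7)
The plan is to prove both implications by unpacking the quadratic-form definition of $\pcnpert{z}{b}$ and performing integration by parts separately on each of the three open intervals $(-\infty, -b)$, $(-b, b)$, and $(b, \infty)$. The identity defining the operator action is $\ourformlong{z}{b}{y}{g} = \nu \ltwopair{y}{g}{\RR}$ for every $g$ in the form domain.

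For the forward implication, assume $y \in \opdom{\pcnpert{z}{b}}$ with $\pcnpert{z}{b} y = \nu y$. First I would test against $g \in \testcpct{\RR}$ whose support avoids $\pm b$; the $\delta$-contributions then vanish, so one obtains $-y'' + (x^{2}/4 - 1/2) y = \nu y$ as a distribution on each of the three intervals. Since $y \in \sobh{\RR}{1}$ and the coefficients are smooth, standard ODE regularity lifts $y$ to a $C^{\infty}$ solution of \eqref{eq:WeberLoc} on each interval. Next I would test against a general $g \in \testcpct{\RR}$ and split $\int_{\RR} y'\overline{g'}$ into the three pieces; piecewise integration by parts produces the interior integral $-\int y''\overline{g}$ plus the boundary terms $-[y'(b+) - y'(b-)]\overline{g(b)} - [y'(-b+) - y'(-b-)]\overline{g(-b)}$. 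Using the Weber equation to cancel the interior integrals against $\nu \ltwopair{y}{g}{\RR}$, and comparing with the $\delta$-terms in $\ourformlong{z}{b}{y}{g}$, the identity collapses to
\[
\bigl\{ zy(b) - [y'(b+) - y'(b-)] \bigr\}\overline{g(b)} + \bigl\{ -zy(-b) - [y'(-b+) - y'(-b-)] \bigr\}\overline{g(-b)} = 0,
\]
and since $\overline{g(b)}$ and $\overline{g(-b)}$ can be prescribed independently, this yields the jump conditions \eqref{eq:jumpconds}.

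For the reverse implication, I would assume $y \in \sobh{\RR}{1}$ is $C^{\infty}$ on each of the three intervals, satisfies \eqref{eq:WeberLoc} there, and obeys the jump conditions. The classical theory of the Weber equation (Section~\ref{sec:prelimWeber}) shows that any $L^{2}$-solution on an unbounded interval is a multiple of a decaying $D$-type solution $\sim e^{-x^{2}/4}$, so in particular $xy \in L^{2}(\RR)$ and $y$ lies in the form domain. For a general $g$ in the form domain, I would reverse the piecewise integration-by-parts computation above: the boundary contributions at $\pm b$ are exactly canceled by the $\delta$-terms in $\ourformlong{z}{b}{y}{g}$ (which is precisely the content of the jump conditions), and the boundary terms at $\pm \infty$ vanish because $y$ and $y'$ decay super-exponentially while $g \in \sobh{\RR}{1}$ with $xg \in L^{2}(\RR)$ is bounded and tends to $0$ at infinity. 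The identity $\ourformlong{z}{b}{y}{g} = \nu \ltwopair{y}{g}{\RR}$ then holds for all such $g$, so $y \in \opdom{\pcnpert{z}{b}}$ and $\pcnpert{z}{b} y = \nu y$.

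The only delicate step is the handling of the boundary terms at $\pm \infty$ in the reverse direction for non-compactly-supported test functions $g$; this is routine once the super-exponential decay of $L^{2}$-solutions of the Weber equation on the unbounded intervals is invoked, and everything else is a direct piecewise integration by parts matched against the definition of the quadratic form.
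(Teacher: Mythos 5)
Your proposal is correct, but there is nothing in the paper to compare it against: the paper states this result as ``Folklore'' and gives no proof, deferring implicitly to the form-method construction of the operator in \cite{MiSiArt} (following Kato). Your argument is exactly the standard one that fills this gap: characterize $\opdom{\pcnpert{z}{b}}$ through the first representation theorem for the sectorial form $\ourform{z}{b}$, test first against $g \in \testcpct{\RR \setminus \{\pm b\}}$ to get the Weber equation distributionally on the three intervals (with ODE regularity upgrading $y$ to $C^{\infty}$ there), then against general $g$ and integrate by parts piecewise so that the boundary terms at $\pm b$ are forced to match the $\delta$-terms of the form, giving \eqref{eq:jumpconds}; and conversely reverse the computation. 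Two points you treat lightly but correctly: (i) in the forward direction the piecewise integration by parts needs the one-sided limits $y^{\prime}(\pm b \pm)$ to exist, which follows because the Weber coefficients are smooth up to the closed subintervals so the interior solutions extend $C^1$ to the endpoints; (ii) in the reverse direction your appeal to the dichotomy of Weber solutions on the unbounded intervals (the $L^2$ one must be the $D_{\nu}$-type, super-exponentially decaying solution) is the same asymptotic fact from \cite[Section 8.1.6]{MOS} that the paper itself invokes in the proof of Lemma~\ref{lem:types}, and it is what guarantees both $xy \in L^2(\RR)$ (membership in the form domain) and the vanishing of the boundary terms at $\pm\infty$ for non-compactly-supported $g$. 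With those two remarks made explicit, your proof is complete and supplies a justification the paper omits.
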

(For $c$ real, $f(x)$ a function on $\RR$, $f(c \pm) = \lim_{x \to c^{\pm}} f(x)$.)  Combining Fact~\ref{fact:sols} and Fact~\ref{fact:jumpconds}, we have the following.
\begin{lem} \label{lem:types}
Fix $b > 0$ and $z \in \CC$.  If $y(x)$ is an eigenfunction of $\pcnpert{z}{b}$ with eigenvalue $\nu$, then 
\begin{equation} \label{eq:form}
y(x) = \begin{cases} \beta D_{\nu}(-x), & x \leq - b,\\
\sigma \evensol{\nu}{x} + \tau \oddsol{\nu}{x}, & -b < x < b,\\
\alpha D_{\nu}(x), & x \geq b,  \end{cases}
\end{equation}
for some complex $\alpha, \beta, \sigma, \tau$.  
\end{lem}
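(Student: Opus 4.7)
The proof plan is to apply Fact \ref{fact:jumpconds} and then analyze each of the three intervals separately, using the two-dimensionality of the solution space of \eqref{eq:WeberLoc} together with the $L^2$ decay requirement at $\pm \infty$ to pin down the form of $y(x)$.

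First, I would invoke Fact \ref{fact:jumpconds} to conclude that on each of the three open intervals $(-\infty, -b)$, $(-b, b)$, $(b, \infty)$, the function $y$ is a classical ($C^{\infty}$) solution of the Weber equation \eqref{eq:WeberLoc}. Since \eqref{eq:WeberLoc} is a second-order linear ODE with entire coefficients, its solution space on any such interval is two-dimensional. Therefore it is enough to pick, on each interval, a convenient basis of solutions and determine which coefficients are forced to vanish.

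For the middle interval $(-b, b)$, I would use the basis $\{\evensol{\nu}{x}, \oddsol{\nu}{x}\}$: by \eqref{eq:ZeroIC}, their Wronskian at $0$ equals $1$, so by Remark \ref{rem:constWronsk} they are linearly independent on all of $\CC$, and $y(x) = \sigma \evensol{\nu}{x} + \tau \oddsol{\nu}{x}$ on $(-b, b)$ for some $\sigma, \tau \in \CC$. For the right interval $(b, \infty)$, I would use the basis $\{D_{\nu}(x), D_{-\nu-1}(ix)\}$, which is a basis for \emph{every} $\nu \in \CC$ since by \eqref{eq:rotatewron} its Wronskian $\exp(-i\pi(\nu+1)/2)$ never vanishes. (This choice is robust, as opposed to $\{D_{\nu}(x), D_{\nu}(-x)\}$, whose Wronskian degenerates precisely when $\nu \in \Nz$.) Writing $y(x) = \alpha D_{\nu}(x) + \gamma D_{-\nu-1}(ix)$ on $(b, \infty)$, the asymptotic $D_{\nu}(x) \sim x^{\nu} e^{-x^2/4}$ shows $D_{\nu}(x) \in L^2(b,\infty)$, whereas the analogous asymptotic for $D_{-\nu-1}(ix)$ yields growth like $\lvert x \rvert^{-\nu-1} e^{+x^2/4}$ as $x \to +\infty$, so $D_{-\nu-1}(ix) \notin L^2(b, \infty)$. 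Since $y \in \opdom{\pcnpert{z}{b}} \subset L^2(\RR)$, we must have $\gamma = 0$, and so $y(x) = \alpha D_{\nu}(x)$ on $[b, \infty)$. The same argument with $x \mapsto -x$ (which preserves \eqref{eq:WeberLoc} by Fact \ref{fact:sols}) shows $y(x) = \beta D_{\nu}(-x)$ on $(-\infty, -b]$.

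The only delicate point, and hence the main obstacle, is justifying the claim that the second basis element in the outer intervals genuinely fails to be $L^2$. This is a growth-vs-decay statement that one must extract from the large-$x$ asymptotics of the parabolic cylinder functions in the relevant sectors of the complex plane; I would simply quote the standard asymptotic expansions from \cite{DLMFParab} or \cite{MOS} rather than derive them. Once this asymptotic fact is in hand, the rest of the proof is an immediate consequence of linear independence and the dimension count.
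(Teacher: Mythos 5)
Your proposal is correct and follows essentially the same route as the paper: invoke Fact~\ref{fact:jumpconds}, use the bases $\bracepair{\evensol{\nu}{x}, \oddsol{\nu}{x}}$ on $(-b,b)$ and $\bracepair{D_{\nu}(x), D_{-\nu-1}(ix)}$ on the outer intervals (with nonvanishing Wronskians), and kill the second coefficient there via the $e^{x^2/4}$ growth of $D_{-\nu-1}(\pm ix)$ against $y \in L^2(\RR)$. The only cosmetic difference is that you treat $(-\infty,-b]$ by the reflection $x \mapsto -x$, whereas the paper works with the basis $\bracepair{D_{\nu}(-x), D_{-\nu-1}(ix)}$ directly and computes its Wronskian by the chain rule; both are the same idea.
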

\begin{proof}
We choose a basis of solutions to \eqref{eq:WeberLoc} on each subinterval.
\begin{description} 
\item[{On  $[b, \infty)$}]  $\bracepair{D_{\nu}(x), D_{- \nu - 1}(ix)}$ is a basis because their Wronskian is nonzero (see \eqref{eq:rotatewron}).
\item[On $(-b, b)$]  $\bracepair{\evensol{\nu}{x}, \oddsol{\nu}{x}}$ is a basis here.  Note that by \eqref{eq:ZeroIC}, 
\[
\wronsk{\evensol{\nu}{x}}{\oddsol{\nu}{x}}{0} = 1 \cdot 1 - 0 \cdot 0 = 1,
\]
and since $\evensol{\nu}{x}$ and $\oddsol{\nu}{x}$ satisfy the same differential equation, their Wronskian is therefore never $0$, so we indeed have a basis.
\item[{On $(-\infty, -b]$}]  We choose $\bracepair{D_{\nu}(-x), D_{- \nu - 1}(ix)}$ is a basis.  The Wronskian is nonzero, because by the Chain Rule,
\[
\wronsk{f(cx)}{g(cx)}{t} = c \wronsk{f(x)}{g(x)}{ct},
\]
so that again by \eqref{eq:rotatewron},
\[
\wron [D_{\nu}(-x), D_{- \nu - 1}(ix)] = - \wron [D_{\nu}(x), D_{- \nu - 1}(-ix) ] = -  \exp \left( i \frac{\pi}{2} (\nu + 1) \right) \neq 0.
\]
\end{description}
Eigenfunctions are solutions to the unperturbed differential equation \eqref{eq:WeberLoc} on each subinterval, by Fact~\ref{fact:jumpconds}; therefore 
\begin{equation} \label{eq:semiform}
y(x) = \begin{cases} \beta D_{\nu}(-x) + t D_{- \nu - 1}(ix), & x \leq - b,\\
\sigma \evensol{\nu}{x} + \tau \oddsol{\nu}{x}, & -b < x < b,\\
\alpha D_{\nu}(x) + s D_{- \nu - 1}(ix), & x \geq b.  \end{cases}
\end{equation}
for some complex constants $\beta, t, \sigma, \tau, \alpha, s$.  Yet we have the known asymptotic as $\abs{x} \to \infty$, (\cite[Section 8.1.6, p. 331]{MOS}),
\[
D_{\nu}(x) = e^{-x^2/4} e^{\nu \log x} \left( 1 - \frac{\nu(\nu - 1)}{2x^2} + \bigo{\abs{x}^{-4}} \right), \quad \abs{\arg x} < \frac{3\pi}{4}.
\]
Applying this to $D_{- \nu - 1}(\pm ix)$, if $\abs{\arg x} < \frac{\pi}{4}$,
\[
D_{- \nu - 1}(\pm ix) = e^{x^2/4} e^{( - \nu - 1) [(\log x) \pm  i\pi/2]} \left( 1 + \frac{( - \nu - 1)(-\nu - 2)}{2x^2} + \bigo{\abs{x}^{-4}} \right).
\]
Hence, in \eqref{eq:semiform}, line 3, $s = 0$, otherwise the solutions would be growing in magnitude as $x \to +\infty$, which is incompatible with $y \in L^2(\RR)$.  In the same way, we can show in \eqref{eq:semiform}, line 1, $t = 0$, if we rewrite $D_{- \nu - 1}(ix) = D_{- \nu - 1}(- i \cdot (-x) )$, so that as $x \to -\infty$, $\arg (-x) = 0$ and we may apply the above asymptotic.    
\end{proof}
\subsection{Boundary conditions at \texorpdfstring{$\pm b$}{plus-or-minus b}}
Suppose that $y(x)$ is indeed an eigenfunction of $\pcnpert{z}{b}$ with eigenvalue $\nu$.  Then by Lemma~\ref{lem:types}, we have that 
\begin{equation} \label{eq:formrepeat}
y(x) = \begin{cases} \beta D_{\nu}(-x), & x \leq - b,\\
\sigma \evensol{\nu}{x} + \tau \oddsol{\nu}{x}, & -b < x <b,\\
\alpha D_{\nu}(x), & x \geq b.  \end{cases}
\end{equation}
Yet functions in the domain of $\pcnpert{z}{b}$ are continuous, so we must have
\begin{equation}
\begin{split}
y(b+) & = y(b-)\\
\alpha D_{\nu}(b) & = \sigma \evensol{\nu}{b} + \tau \oddsol{\nu}{b}
\end{split}
\end{equation}
and
\begin{equation}
\begin{split}
y(-b+) & = y(-b-)\\
\sigma \evensol{\nu}{-b} + \tau \oddsol{\nu}{-b} & = \beta D_{\nu}(- (-b))\\
\sigma \evensol{\nu}{b} - \tau \oddsol{\nu}{b} & = \beta D_{\nu}(b) .
\end{split}
\end{equation}
Similarly, the jump condition at $+b$ (i.e., \eqref{eq:jumpconds} becomes
\begin{equation}
\begin{split}
y^{\prime}(b+) - y^{\prime}(b-) & = z y(b)\\
\alpha \deloneat{x}{D_{\nu}(x)}{b} - (\sigma \deloneat{x}{\evensol{\nu}{x}}{b} + \tau \deloneat{x}{\oddsol{\nu}{x}}{b}) & = z \alpha D_{\nu}(b)\\
\end{split}
\end{equation}
The jump condition at $-b$ becomes (by function parity and the Chain Rule)
\begin{equation}
\begin{split}
y^{\prime}(-b+) - y^{\prime}(-b-) & = -z y(b)\\
\left(\sigma \deloneat{x}{\evensol{\nu}{x}}{-b} + \tau \deloneat{x}{\oddsol{\nu}{x}}{-b} \right) - \beta \deloneat{x}{D_{\nu}(-x)}{-b} & = -z \beta D_{\nu}(-(-b))\\
- \sigma \deloneat{x}{\evensol{\nu}{x}}{b} + \tau \deloneat{x}{\oddsol{\nu}{x}}{b} + \beta \deloneat{x}{D_{\nu}(x)}{b} & = -z \beta D_{\nu}(b)\\
\end{split}
\end{equation}
Putting this all together, and letting
\begin{subequations} \label{eq:gothdeclareone}
\begin{align}
\gotP &\declare D_{\nu}(b), & \gotQ &\declare \deloneat{x}{D_{\nu}(x)}{b}, \\
\gotR &\declare \evensol{\nu}{b},& \gotS &\declare \deloneat{x}{\evensol{\nu}{x}}{b},\\
\gotT & \declare \oddsol{\nu}{b}, &\gotU & \declare \deloneat{x}{\oddsol{\nu}{x}}{b},
\end{align}
\end{subequations}
we have that 
\begin{equation}
\begin{pmatrix}
-\gotP            & 0 & \gotR & \gotT\\
0                &- \gotP   & \gotR &  -\gotT \\
 z \gotP - \gotQ & 0 & \gotS &  \gotU \\
0 & z \gotP  + \gotQ & - \gotS &  \gotU
\end{pmatrix}
\cdot \begin{pmatrix}
\alpha \\ \beta \\ \sigma \\ \tau
\end{pmatrix} = \begin{pmatrix}
0 \\ 0 \\ 0 \\ 0
\end{pmatrix} \, .
\end{equation}
This has nontrivial solutions (i.e., the $\nu$-eigenspace is nontrivial) if and only if the determinant is nonzero, i.e., if and only if
\begin{equation} \label{eq:det}
2 (\gotR \gotQ - \gotS \gotP)(\gotT \gotQ - \gotU \gotP) -2 z^2 \gotP^2 \gotR \gotT= 0.
\end{equation}

Yet recalling the definitions of $\gotP$, etc. (i.e., \eqref{eq:gothdeclareone}), we see that 
\[
\gotR\gotQ - \gotS\gotP  = \wronsk{\evensol{\nu}{x}}{D_{\nu}(x)}{b}.
\]
By the decomposition of $D_{\nu}(x)$ into the even and odd terms, however, this becomes
\[
\begin{split}
& \wronsk{\evensol{\nu}{x}}{D_{\nu}(x)}{b} \\
= &\wronsk{\evensol{\nu}{x}}{\frac{2^{ \nu/2} \sqrt{\pi}}{\Gamma \left(- \frac{\nu}{2} + \frac{1}{2}\right)} \evensol{\nu}{x} - \frac{2^{(\nu + 1)/2 }\sqrt{\pi}}{\Gamma \left( - \frac{\nu}{2} \right)} \oddsol{\nu}{x}}{b}\\
= & - \frac{2^{(\nu + 1)/2 }\sqrt{\pi}}{\Gamma \left( - \frac{\nu}{2} \right)} \wronsk{\evensol{\nu}{x}}{\oddsol{\nu}{x}}{b}.
\end{split}
\]
As the Wronskians of solutions to \eqref{eq:ourWeber} are constant functions (see Remark~\ref{rem:constWronsk}), zero can be chosen as the evaluation point, and we get 
\[
\begin{split}
\wronsk{\evensol{\nu}{x}}{D_{\nu}(x)}{b} &=   - \frac{2^{(\nu + 1)/2 }\sqrt{\pi}}{\Gamma \left( - \frac{\nu}{2} \right)} \wronsk{\evensol{\nu}{x}}{\oddsol{\nu}{x}}{0}\\
& = - \frac{2^{(\nu + 1)/2 }\sqrt{\pi}}{\Gamma \left( - \frac{\nu}{2} \right)} .
\end{split}
\]
Similarly, one has
\[
\begin{split}
\gotT \gotQ - \gotU \gotP &= \wronsk{\oddsol{\nu}{x}}{D_{\nu}(x)}{b}\\
& = \frac{2^{ \nu/2} \sqrt{\pi}}{\Gamma \left(- \frac{\nu}{2} + \frac{1}{2}\right)}\wronsk{\oddsol{\nu}{x}}{\evensol{\nu}{x}}{0}\\
& = - \frac{2^{ \nu/2} \sqrt{\pi}}{\Gamma \left(- \frac{\nu}{2} + \frac{1}{2}\right)}.
\end{split}
\]
Altogether, then, \eqref{eq:det} becomes
\[
2 \left( \frac{2^{\nu + (1/2)} \pi}{\Gamma( - \frac{\nu}{2}) \Gamma \left(- \frac{\nu}{2} + \frac{1}{2}\right)}  - z^2 D_{\nu}^2(b) \evensol{\nu}{b} \oddsol{\nu}{b} \right) = 0
\]
By using the Gamma-function double-angle formula (e.g., Fact~\ref{fact:doubler}, \eqref{eq:doubler}),
\begin{equation} \label{eq:doublerRepeat}
\Gamma(2 w) = \frac{2^{2w - 1}}{\sqrt{\pi}} \Gamma(w) \Gamma\left(w + \frac{1}{2} \right), \quad 2w \not \in - \Nz,
\end{equation}
and applying with $w = - \frac{\nu}{2}$, $\nu \not\in \Nz$, we have
\begin{equation} \label{eq:flipdoubler}
\frac{2^{\nu + (1/2)} \pi}{\Gamma( - \frac{\nu}{2}) \Gamma \left(- \frac{\nu}{2} + \frac{1}{2}\right)} = \frac{\sqrt{\pi}}{\sqrt{2} \Gamma(-\nu)}.
\end{equation}
Yet  \eqref{eq:flipdoubler} holds for $\nu \in \Nz$ as well.  If $\nu = 2k$ is nonnegative and even, then both $\frac{1}{\Gamma(-\nu)} = \frac{1}{\Gamma(-2k)}$ and $\frac{1}{\Gamma( - \frac{\nu}{2})} = \frac{1}{\Gamma(-k)}$ are $0$, and if $\nu = 2k +1 $ is positive and odd, then both $\frac{1}{\Gamma(-\nu)} = \frac{1}{\Gamma(-2k - 1)}$ and $\frac{1}{\Gamma \left(- \frac{\nu}{2} + \frac{1}{2}\right)} = \frac{1}{-k}$ are $0$.  Therefore, we have:
\begin{prop} \label{prop:eigencondbasic} Fix $z \in \CC$ and $b \in (0, \infty)$.  Then $\nu \in \spec{\pcnpert{z}{b}}$ if and only if
\begin{equation} \label{eq:condbasic}
\frac{\sqrt{\pi}}{\sqrt{2}\Gamma(- \nu)}- z^2 D_{\nu}^2(b) \evensol{\nu}{b} \oddsol{\nu}{b}  = 0.
\end{equation}
\end{prop}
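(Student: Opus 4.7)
The plan is to translate the eigenvalue condition into the vanishing of a $4 \times 4$ determinant coming from the continuity and jump requirements at $\pm b$, and then simplify that determinant using the Wronskian identities and the gamma-function duplication formula. Since $\pcnpert{z}{b}$ has compact resolvent, $\nu \in \spec(\pcnpert{z}{b})$ if and only if $\ker(\pcnpert{z}{b} - \nu)$ is nontrivial, so the task reduces to deciding when a nonzero eigenfunction exists.

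First, I would invoke Lemma~\ref{lem:types} to write any putative eigenfunction in the three-piece form
\[
y(x) = \begin{cases} \beta D_{\nu}(-x), & x \leq -b,\\ \sigma \evensol{\nu}{x} + \tau \oddsol{\nu}{x}, & -b < x < b,\\ \alpha D_{\nu}(x), & x \geq b, \end{cases}
\]
with unknowns $\alpha,\beta,\sigma,\tau \in \CC$. Since $\opdom{\pcnpert{z}{b}} \subset \sobh{\RR}{1}$ consists of continuous functions, matching left and right values at $\pm b$ (exploiting the parity of $\evensol{\nu}{\cdot}$ and $\oddsol{\nu}{\cdot}$ on the left side) yields two linear equations in $(\alpha,\beta,\sigma,\tau)$, and the jump conditions from Fact~\ref{fact:jumpconds} yield two more, with coefficients built from the values of $D_{\nu}$, $\evensol{\nu}{\cdot}$, $\oddsol{\nu}{\cdot}$ and their derivatives at $b$ (abbreviated $\gotP,\gotQ,\gotR,\gotS,\gotT,\gotU$). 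Nontriviality of the kernel is then equivalent to the vanishing of the associated $4 \times 4$ determinant, which after a direct expansion reads
\[
2(\gotR\gotQ - \gotS\gotP)(\gotT\gotQ - \gotU\gotP) - 2 z^2 \gotP^2 \gotR\gotT = 0.
\]

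The two bracketed factors are precisely the Wronskians $\wronsk{\evensol{\nu}{x}}{D_{\nu}(x)}{b}$ and $\wronsk{\oddsol{\nu}{x}}{D_{\nu}(x)}{b}$. Since these Wronskians are constant on $\CC$ (Remark~\ref{rem:constWronsk}), I would evaluate them at $x = 0$ using the decomposition \eqref{eq:dnudecomp} together with the normalization $\wronsk{\evensol{\nu}{x}}{\oddsol{\nu}{x}}{0} = 1$ from \eqref{eq:ZeroIC}, which replaces them by explicit $\Gamma$-values. Substituting back and applying the duplication formula with $w = -\nu/2$ collapses the constant term to $\sqrt{\pi}/(\sqrt{2}\,\Gamma(-\nu))$, giving \eqref{eq:condbasic} for $\nu \notin \Nz$. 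The main obstacle I would anticipate is extending the identity to $\nu \in \Nz$, where the decomposition \eqref{eq:dnudecomp} has singular denominators; here I would exploit that for $\nu = 2k$ both $1/\Gamma(-\nu)$ and $1/\Gamma(-\nu/2)$ vanish, and for $\nu = 2k+1$ both $1/\Gamma(-\nu)$ and $1/\Gamma(-\nu/2 + 1/2)$ vanish, so both sides of the duplication identity remain zero. Combined with Lemma~\ref{lem:zerointerference}, which governs which of $\evensol{\nu}{b},\oddsol{\nu}{b}$ can vanish when $D_{\nu}(b)=0$, this shows that the algebraic determinant condition and the stated formula continue to agree on $\Nz$, completing the equivalence.
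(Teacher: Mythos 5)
Your proposal follows essentially the same route as the paper's own proof: reduce via Lemma~\ref{lem:types} and the continuity/jump conditions at $\pm b$ to the vanishing of the $4\times 4$ determinant $2(\gotR\gotQ-\gotS\gotP)(\gotT\gotQ-\gotU\gotP)-2z^2\gotP^2\gotR\gotT$, identify the factors as constant Wronskians evaluated at $0$ via \eqref{eq:dnudecomp}, and collapse the gamma factors with the duplication formula, checking separately that the identity \eqref{eq:flipdoubler} persists on $\Nz$ because the relevant reciprocal gamma values all vanish. The only cosmetic difference is your appeal to Lemma~\ref{lem:zerointerference} at the end, which the paper does not need (and you do not really need either); the $\nu\in\Nz$ case is already settled by the vanishing of $1/\Gamma(-\nu)$ and the matching reciprocal gamma factor.
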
  
\begin{smallrem}
This result is a slight generalization of \cite[Eq. 8, p. 1083]{Demi05b}.  To see the compatibility, take \eqref{eq:condbasic},  divide by $\frac{\sqrt{\pi}}{\sqrt{2}\Gamma(- \nu)}$, let $z = iV$, rename $b$ as $z_1$ and $\nu$ as $\xi$, and use Fact~\ref{fact:doubler} to rewrite the product of the odd and even solutions as a power of $2$, some Gamma functions, and $D_{\xi}^2(-z_1) - D_{\xi}^2(z_1)$.  
\end{smallrem}
\subsection{Separation of Variables}
We separate the variables, at the cost of making some functions in the equation meromorphic in $\nu$.  
\begin{cor}  \label{cor:Demiwithb}
Fix $z \in \CC$ and $b \in (0, \infty)$.  For $\nu \in \CC \setminus \Nz$,  $\nu \in \spec{\pcnpert{z}{b}}$ if and only if
\begin{equation} \label{eq:maineqnflipWithB}
M(\nu; b)= \frac{1}{z^2}, \end{equation}
where 
\begin{equation} \label{eq:maineqnelabWithB}
M(\nu; b) \declare \sqrt{\frac{2}{\pi}}  \Gamma(-\nu) D_{\nu}^2(b) \evensol{\nu}{b} \oddsol{\nu}{b}.
\end{equation}
The pole of $M(\nu; b)$ at $\nu = 0$ is not removable, so the function $M(\cdot; b)$ is not constant in $\nu$.  Also, $M(\nu; b)$ is real if $\nu$ is real.  
\end{cor}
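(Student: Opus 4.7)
The plan is to derive all three claims directly from Proposition~\ref{prop:eigencondbasic} and the explicit forms of the parabolic cylinder functions. The iff equivalence~\eqref{eq:maineqnflipWithB} is a simple algebraic rewriting of~\eqref{eq:condbasic}: for $\nu \notin \Nz$, $\Gamma(-\nu)$ is finite and nonzero, so multiplying~\eqref{eq:condbasic} through by $\sqrt{2}\,\Gamma(-\nu)/\sqrt{\pi}$ gives $1 = z^2 M(\nu; b)$, i.e.~\eqref{eq:maineqnflipWithB} for $z \neq 0$ (the case $z = 0$ is vacuous, since then the spectrum of the unperturbed operator is $\Nz$, disjoint from the parameter set $\CC \setminus \Nz$, and the right-hand side is undefined).

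To show that the pole of $M(\nu; b)$ at $\nu = 0$ is non-removable, I would argue as follows. Since $D_\nu(b)$, $\evensol{\nu}{b}$, and $\oddsol{\nu}{b}$ are all holomorphic in $\nu$ (Sections~\ref{subsect:ourSoln} and~\ref{subsect:evenOdd}), while $\Gamma(-\nu)$ has a simple pole at $\nu = 0$, it suffices to verify $D_0(b)\,\evensol{0}{b}\,\oddsol{0}{b} \neq 0$. Evaluating the series~\eqref{eq:yEvenDirect} at $\nu = 0$ collapses all terms past the first, giving $\evensol{0}{x} = e^{-x^2/4}$; from~\eqref{eq:dnudecomp} (using $\Gamma(1/2) = \sqrt{\pi}$ and $1/\Gamma(0) = 0$) one also reads off $D_0(x) = e^{-x^2/4}$, both nonzero at $x = b$. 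For $\oddsol{0}{b}$, I would sum the series~\eqref{eq:yOddDirect} at $\nu = 0$ to the closed form $\oddsol{0}{x} = e^{-x^2/4} \int_0^x e^{t^2/2}\,dt$ (either by matching Taylor coefficients term by term, or by checking that this expression satisfies~\eqref{eq:WeberLoc} at $\nu = 0$ with $y(0)=0$, $y'(0)=1$ and invoking ODE uniqueness). Since $b > 0$, the integral is strictly positive, so $\oddsol{0}{b} > 0$. Thus $M(\nu; b)$ has a genuine simple pole at $\nu = 0$, and in particular is not constant.

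Finally, realness on $\RR \setminus \Nz$ is immediate from the structure of the defining expressions. The series~\eqref{eq:yParityDirect} for $\evensol{\nu}{x}$ and $\oddsol{\nu}{x}$ have coefficients that are polynomials in $\nu$ with rational coefficients, so for real $\nu$ and real $b$ both values are real. The decomposition~\eqref{eq:dnudecomp} then realizes $D_\nu(b)$ as a real linear combination of real quantities (recall $\Gamma$ is real-valued on $\RR \setminus (-\Nz)$), hence real; and $\Gamma(-\nu)$ itself is real for real $\nu \notin \Nz$. Therefore $M(\nu; b) \in \RR$ for real $\nu \notin \Nz$. None of these three tasks presents a genuine obstacle; the only mildly technical step is recognizing the closed form of $\oddsol{0}{x}$ to conclude $\oddsol{0}{b} \neq 0$, which is a routine series or ODE-uniqueness computation.
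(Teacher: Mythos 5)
Your proposal is correct and follows essentially the same route as the paper: rewrite \eqref{eq:condbasic} algebraically, show the holomorphic factor $D_0(b)\,\evensol{0}{b}\,\oddsol{0}{b}$ is nonzero so the simple pole of $\Gamma(-\nu)$ at $\nu=0$ survives (hence $M$ is nonconstant), and deduce reality from the real power series \eqref{eq:yParityDirect} together with \eqref{eq:dnudecomp}. The only cosmetic differences are that you treat $z=0$ explicitly and obtain $\oddsol{0}{b}>0$ via the closed form $e^{-x^2/4}\int_0^x e^{t^2/2}\,dt$, where the paper simply observes that every term of the series \eqref{eq:yOddDirect} at $\nu=0$ is positive for $b>0$.
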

\begin{proof}
For $\nu \not\in \Nz$, transforming \eqref{eq:condbasic} to \eqref{eq:maineqnflipWithB} is elementary algebra.  To demonstrate that the pole of $M(\nu, b)$ at $\nu = 0$ is not removable, we need to prove that $D_0(b)$, $\evensol{0}{b}$, and $\oddsol{0}{b}$ are not $0$.  
\begin{description}
\item[$D_{0}(b) \neq 0$] It is known (e.g, \cite[Section 8.1.2, p. 326]{MOS}) that $D_0(b) = \exp \left( - \frac{b^2}{4} \right)$, and this is nonzero.
\item[$\evensol{0}{b} \neq 0$] By \eqref{eq:yEvenDirect}, 
\[
\evensol{0}{b} = e^{-b^2/4} \left[ 1 + 0 \frac{b^2}{2!} + (0)(2) \frac{b^4}{4!} + \dotsb \right] = e^{-b^2/4},
\]
and this is nonzero.
\item[$\oddsol{0}{b} \neq 0$] By $\eqref{eq:yOddDirect}$ and $b > 0$,
\[
\begin{split}
\oddsol{0}{b} &= e^{-b^2/4} \left[ b + 1 \frac{b^3}{3!} + 1(3) \frac{b^5}{5!} + \dotsb \right] > 0.
\end{split}
\]
\end{description}
Since the pole at $\nu = 0$ is not removable, we must have that $\lim_{\nu \to 0} \abs{M(\nu; b)} = \infty$, and hence $M(\nu; b)$ is non-constant in $\nu$.  

The reality of $\evensol{\nu}{b}$ and $\oddsol{\nu}{b}$ if $\nu$ is real comes from the power-series expansions \eqref{eq:yParityDirect}, as all summands are real.  Then the decomposition of $D_{\nu}(x)$ in terms of the even and odd solutions, i.e. \eqref{eq:dnudecomp}, and the reality of the gamma function for real inputs in its domain, ensures that $D_{\nu}(b)$ is real for real inputs.  
\end{proof}

\subsection{Notation}  So far, $x$ has been the key variable, as our functions were functions of $x$, and all derivatives in \eqref{eq:ourWeber} are in $x$.  Now that the eigenvalue equation has been formed, the emphasis shifts, and $\nu$ becomes the primary variable in the sequel.   We choose a fixed $b > 0$ and suppress explicit references to $b$:
\begin{equation} \label{eq:removeb}
\begin{split}
\nuD{\nu} & \declare D_{\nu}(b),\\
\nuP{\nu} & \declare \sqrt{\frac{2}{\pi}}  \evensol{\nu}{b} \oddsol{\nu}{b},\\
\nuM{\nu} & \declare M(\nu; b).
\end{split}
\end{equation}
Thus, we rewrite Proposition~\ref{prop:eigencondbasic} and Corollary~\ref{cor:Demiwithb} as follows.
\begin{cor} \label{cor:Deminob}
Fix $z \in \CC$. $\nu \in \CC$ is in $\spec{\pcnpert{z}{b}}$ if and only if 
\begin{equation} \label{eq:condrelabel}
\frac{1}{\Gamma(- \nu)} - z^2 \nuD{\nu} \nuP{\nu} = 0.
\end{equation}
If, in addition, $\nu \in \CC \setminus \Nz$, then $\nu \in \spec{\pcnpert{z}{b}}$ if and only if
\begin{equation} \label{eq:maineqnflip}
\nuM{\nu} =  \frac{1}{z^2}, 
\end{equation}
where 
\begin{equation} \label{eq:maineqnelab}
\nuM{\nu} =  \Gamma(-\nu) [\nuD{\nu}]^2 \nuP{\nu}.
\end{equation}
The pole of $\mathsf{M}$ at $0$ is never removable, so the function $\nuM{\nu}$ is not constant.  Also, $\mathsf{M}$ is real-valued for $\nu$ real.  
\end{cor}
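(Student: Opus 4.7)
The plan is to prove Corollary~\ref{cor:Deminob} as a direct relabeling of Proposition~\ref{prop:eigencondbasic} and Corollary~\ref{cor:Demiwithb} in the notation \eqref{eq:removeb}. First, I would take the condition \eqref{eq:condbasic} from Proposition~\ref{prop:eigencondbasic}, namely
\[
\frac{\sqrt{\pi}}{\sqrt{2}\,\Gamma(-\nu)} - z^2 D_\nu^2(b) \evensol{\nu}{b} \oddsol{\nu}{b} = 0,
\]
and multiply through by $\sqrt{2}/\sqrt{\pi}$. Recognizing the factor $\sqrt{2/\pi}\, \evensol{\nu}{b} \oddsol{\nu}{b}$ as $\nuP{\nu}$ and $D_\nu(b)$ as $\nuD{\nu}$ gives precisely \eqref{eq:condrelabel}, valid for all $\nu \in \CC$ (since both sides are entire in $\nu$). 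This establishes the first biconditional.

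Next, assuming $\nu \in \CC \setminus \Nz$, I would divide \eqref{eq:condrelabel} by $\Gamma(-\nu)^{-1} \cdot z^2 \nuD{\nu}\nuP{\nu}$ (using that $\Gamma(-\nu)$ is finite and nonzero away from $-\Nz$, which holds since $-\nu \notin -\Nz$ when $\nu \notin \Nz$ --- actually one needs $\nuD{\nu} \nuP{\nu}$ to be nonzero to rearrange directly; instead it is cleaner to multiply \eqref{eq:condrelabel} by $\Gamma(-\nu)$ to get $1 = z^2 \Gamma(-\nu) \nuD{\nu} \nuP{\nu}$, then divide by $z^2$), obtaining $\nuM{\nu} = 1/z^2$ as in \eqref{eq:maineqnflip}, with $\nuM{\nu} = \Gamma(-\nu)[\nuD{\nu}]^2 \nuP{\nu}$ as in \eqref{eq:maineqnelab}. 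This is Corollary~\ref{cor:Demiwithb} with the suppressed-$b$ notation.

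Finally, for the non-removability of the pole at $\nu = 0$: since $\nuD{\nu}$ and $\nuP{\nu}$ are holomorphic in $\nu$ (from the power-series descriptions \eqref{eq:yParityDirect} and the holomorphy of $D_{\nu}(x)$ in $\nu$ noted in Section~\ref{subsect:ourSoln}), the only source of poles in $\nuM{\nu}$ is the simple pole of $\Gamma(-\nu)$ at $\nu = 0$. The proof of Corollary~\ref{cor:Demiwithb} showed $D_0(b) = e^{-b^2/4} \neq 0$, $\evensol{0}{b} = e^{-b^2/4} \neq 0$, and $\oddsol{0}{b} > 0$; hence $[\nuD{0}]^2 \nuP{0} \neq 0$, so the pole of $\nuM{\nu}$ at $0$ persists, and in particular $\nuM{\nu}$ is non-constant. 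Reality of $\nuM{\nu}$ for real $\nu$ follows because the series \eqref{eq:yEvenDirect}--\eqref{eq:yOddDirect} have real coefficients (hence $\evensol{\nu}{b}, \oddsol{\nu}{b} \in \RR$ for $\nu \in \RR$, $b > 0$), the decomposition \eqref{eq:dnudecomp} together with reality of $\Gamma$ on its real domain gives $\nuD{\nu} \in \RR$, and $\Gamma(-\nu)$ is real (or has a real-valued residue at integer poles).

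There is no substantive obstacle here: all the analytic content was done in Proposition~\ref{prop:eigencondbasic} and Corollary~\ref{cor:Demiwithb}. The only minor point to be careful with is the order of operations when passing between \eqref{eq:condrelabel} and \eqref{eq:maineqnflip} --- one should multiply first and only then divide, to avoid dividing by a quantity that could vanish.
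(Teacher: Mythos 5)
Your proposal is correct and takes essentially the paper's own route: Corollary~\ref{cor:Deminob} is stated there without a separate proof, being simply a relabeling of Proposition~\ref{prop:eigencondbasic} and Corollary~\ref{cor:Demiwithb} via the notation \eqref{eq:removeb}, which is exactly what you carry out (including re-using the non-removability and reality arguments already given for Corollary~\ref{cor:Demiwithb}). The only point worth noting is that your multiplication of \eqref{eq:condbasic} by $\sqrt{2}/\sqrt{\pi}$ actually yields $\frac{1}{\Gamma(-\nu)} - z^{2}[\nuD{\nu}]^{2}\nuP{\nu} = 0$, which shows the single power of $\nuD{\nu}$ in \eqref{eq:condrelabel} to be a typographical slip in the paper (the squared form is the one consistent with \eqref{eq:maineqnelab} and with passing to $\nuM{\nu} = \frac{1}{z^{2}}$), not a gap in your argument.
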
 

\section{Zeros of Parabolic Cylinder Functions in the Parameter}  \label{sec:zeroCount}
An observation from \eqref{eq:maineqnflip} is that as $\abs{z} \to \infty$, $\abs{\frac{1}{z^2}} \to 0$, so the zeroes of $\nuM{\nu}$ outside $\Nz$, in particular of its factor $\mathsf{D}(\nu)$ (see \eqref{eq:maineqnelab}), are helpful in discerning the asymptotic behavior of the eigenvalues as $\abs{z}$ grows large.  For numerical confirmation of this idea, see Figure~\ref{fig:zimhints}. 

To ensure that we have zeroes of $\nuD{\nu}$ to work with, we prove:
\begin{prop} \label{prop:zerocount}
$\nuD{\nu}$ has infinitely many distinct zeroes.
\end{prop}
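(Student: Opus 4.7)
The plan is to show that $\nuD{\nu}$ changes sign infinitely often on the positive real axis; since $\nu \mapsto D_\nu(b)$ is entire (hence real-continuous on $\RR$), the intermediate-value theorem will then produce infinitely many distinct real zeros. The first step reduces the problem at integer $\nu$: at $\nu = n \in \Nz$, one of the power series in \eqref{eq:yParityDirect} terminates and the corresponding Gamma factor in \eqref{eq:dnudecomp} vanishes (exactly the mechanism exploited in the proof of Lemma~\ref{lem:zerointerference}(b)), so $D_n(b)$ becomes a nonzero constant multiple of the Hermite function from \eqref{eq:hermitefcndef}. A brief computation with \eqref{eq:dnudecomp} and Fact~\ref{fact:doubler} yields
\[
D_n(b) = 2^{-n/2}\sqrt{n!\sqrt{\pi}}\,h_n(b/\sqrt{2}), \qquad n \in \Nz,
\]
so $D_n(b)$ and $h_n(b/\sqrt{2})$ share signs.

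The second step invokes the classical Plancherel--Rotach (equivalently, Olver's) asymptotic for $h_n$ at fixed real argument. Setting $y \declare b/\sqrt 2 > 0$ and $\Phi_n \declare \sqrt{2n+1}\,y - n\pi/2$,
\[
h_n(y) = \sqrt{\tfrac{2}{\pi}}\,(2n+1)^{-1/4}\cos\Phi_n + \bigo{n^{-3/4}}.
\]
Since $\Phi_{n+1}-\Phi_n = y\bigl(\sqrt{2n+3}-\sqrt{2n+1}\bigr) - \pi/2 \to -\pi/2$, the four residue-class subsequences $\{\Phi_{4j+k}\}_{j \geq 0}$, $k \in \{0,1,2,3\}$, accumulate at four phase values differing by multiples of $\pi/2$, with cosines approximately $\cos\Phi_0,\,\sin\Phi_0,\,-\cos\Phi_0,\,-\sin\Phi_0$. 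Since $\cos\Phi_0$ and $\sin\Phi_0$ cannot both vanish, at least two of these four cosines are bounded away from zero and of opposite sign. Because the amplitude $(2n+1)^{-1/4}$ dominates the $\bigo{n^{-3/4}}$ error for large $n$, I conclude that $D_n(b)$ is strictly positive for infinitely many $n$ and strictly negative for infinitely many $n$.

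The proof finishes by continuity: $\nu \mapsto \nuD{\nu}$ is real-continuous on $\RR$, so every interval $(n, n+1)$ on which the sign of $D_\bullet(b)$ flips contains a real zero of $\nuD{\cdot}$, and infinitely many such intervals yield infinitely many distinct zeros. The main technical input is the Plancherel--Rotach asymptotic together with the mild observation that its leading cosine dominates the error on an infinite subsequence; both are standard. The authors' reference to Olver's work in the introduction suggests that a streamlined alternative proceeds directly via the large-$\nu$ asymptotic expansion for $D_\nu(b)$, bypassing the Hermite-polynomial detour entirely and providing (as hinted) tighter growth-rate estimates needed elsewhere in the paper.
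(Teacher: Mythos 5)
Your route is genuinely different from the paper's. The paper never evaluates at integer $\nu$ or invokes Hermite asymptotics: it bounds the growth of $\nu \mapsto D_{\nu}(b)$ (order at most one in $\nu$, via Olver's uniform estimates, Lemma~\ref{lem:ub}), assumes finitely many zeros, writes $D_{\nu}(b) = P(\nu)e^{c\nu + d}$ by Weierstrass--Hadamard factorization, and contradicts this with the superlinear decay $\ln\abs{D_{-\xi}(b)} = -\tfrac{\xi}{2}\log\xi + \tfrac{\xi}{2} - O(\sqrt{\xi})$ along the negative real axis (Fact~\ref{fact:decay}). Your plan --- identify $D_n(b)$, $n \in \Nz$, with a positive multiple of a Hermite function, use the fixed-argument oscillatory asymptotic to force infinitely many sign changes of $n \mapsto D_n(b)$, and then apply the intermediate value theorem to the real-valued, continuous function $\nu \mapsto D_{\nu}(b)$ on $\RR$ --- is viable, more elementary in the complex-analytic sense, and would even yield for free that infinitely many zeros are real and positive, something the paper only extracts later (Proposition~\ref{prop:realconsequences}).

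There is, however, one step that fails as written: the residue-class phases do not accumulate. Modulo $2\pi$ one has $\Phi_{4j} \equiv \sqrt{8j+1}\,y$, and the increments $y\left(\sqrt{8j+9}-\sqrt{8j+1}\right)$ tend to $0$ but have divergent sum, so each subsequence $\{\Phi_{4j+k}\}_j$ drifts (in fact equidistributes) mod $2\pi$ rather than converging to a limiting phase; in particular its cosine is not bounded away from zero along the whole subsequence, and the values $\cos\Phi_0, \sin\Phi_0$ play no distinguished role. The conclusion you need is still true, and is recovered by a local version of your observation: since $\Phi_{n+1}-\Phi_n \to -\pi/2$, for large $n$ the four consecutive cosines $\cos\Phi_n,\dots,\cos\Phi_{n+3}$ are approximately $\cos\Phi_n,\ \sin\Phi_n,\ -\cos\Phi_n,\ -\sin\Phi_n$, and since $\max(\abs{\cos\Phi_n},\abs{\sin\Phi_n}) \geq 1/\sqrt{2}$, every block of four consecutive large integers contains one index where the cosine exceeds $1/\sqrt{2}-o(1)$ and one where it lies below $-1/\sqrt{2}+o(1)$; on those indices the leading term dominates the $\bigo{n^{-3/4}}$ error, giving infinitely many sign changes of $D_n(b)$ and hence, by the intermediate value theorem, infinitely many distinct real zeros. (A minor point: the identity should read $D_n(b) = \sqrt{n!\sqrt{\pi}}\, h_n(b/\sqrt{2})$ --- the factor $2^{-n/2}$ cancels --- but this is harmless, since only the positivity of the constant matters for the sign argument.)
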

The statement is implied by Figure 2 (p. 280) of Dean's paper \cite{Dean66}, but we prefer to give a proof. Our approach uses the theory of entire functions, in particular the concept of (exponential) order of an entire function.

\begin{defn}
Let $f(z)$ be an entire function.  Then $f$ is of \emph{finite (exponential) order} if there exists $0 < \rho < \infty$ such that for all $r \geq R = R(\rho)$, 
\[
\max_{\abs{z} \leq r} \abs{f(z)} \leq \exp (r^{\rho}).
\]
If such a $\rho$ exists, we call the infimum of such $\rho$ the \emph{exponential order} (or simply \emph{order}) of $f$.

\end{defn}
\begin{landscape}
\begin{figure}
\centerline{\includegraphics[scale=0.8]{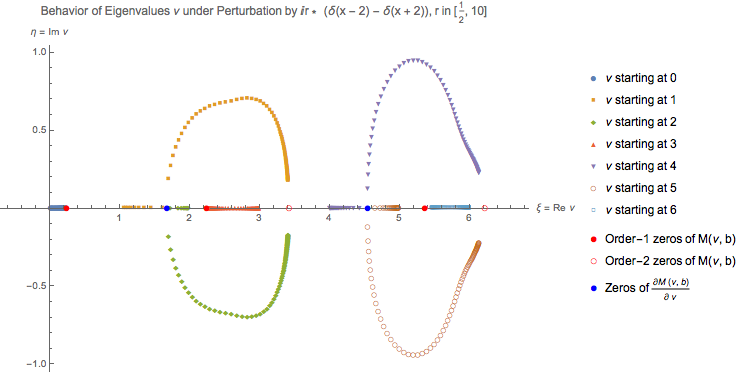}}
\caption{Some small eigenvalues of $\pcnpert{ir}{2}$, $r \in \bracepair*{\frac{1}{2} + \frac{j}{10}: 0 \leq j \leq 95}$.  Zeroes of $M(\nu; 2)$ and its first $\nu$-derivative are marked.  Each zero of $\nuD{\nu}$ eventually has a small neighborhood containing as many eigenvalues as the order of the zero.}
\label{fig:zimhints}
\end{figure}
\end{landscape}
For an entire function $f(\nu)$, define the maximum function $M_f(r)$, $r > 0$, by 
\begin{equation} \label{eq:maxfcndef} 
M_f(r) \declare \sup_{\abs{\nu} = r} \abs{f(\nu)};
\end{equation} 
by the Maximum Modulus Principle, $M_f(r) = \sup_{\abs{\nu} \leq r} \abs{f(\nu)}$.

\begin{smallrem}  It makes little difference to our entire-functions arguments whether the suppressed argument $b$ is real or complex; therefore, \emph{for this section only}, we will let $\beta$ be an arbitrary complex number and redefine $\nuD{\nu} = D_{\nu}(\beta)$; in this more general setting, the function is still at most order-$1$, maximal-type in $\nu$.
\end{smallrem}

For the moment, we assuming the following fact about the growth rate $\nuD{\nu}$, to be proven later. 
\begin{lem} \label{lem:ub}
$\nuD{\nu}$ is of exponential order at most $1$ in $\nu$, though possibly of maximal type; more specifically, we have the estimate, 
\begin{equation}
\log \bracepair*{M_{\mathsf{D}}(r)} \leq \frac{1}{4} r \log r + r \left(2 + \frac{\pi}{4} \right) + O(\log r).
\end{equation}
\end{lem}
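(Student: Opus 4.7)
The plan is to estimate $\mathsf{D}(\nu) = D_\nu(\beta)$ uniformly on $|\nu| = r$ using the integral representations~\eqref{eq:dnuintegrals} in two complementary half-planes. For $\Re\nu \geq -\tfrac12$ I will use the cosine form (valid for $\Re\nu > -1$); for $\Re\nu < -\tfrac12$ I will reduce to the first case via a connection identity.

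On the region $\Re\nu = u \geq -\tfrac12$, the cosine integral together with $|t^\nu| = t^u$ and $|\cos(\pi\nu/2 - \beta t)| \leq \cosh(\pi\Im\nu/2 - t\Im\beta)\leq e^{\pi|\Im\nu|/2 + t|\Im\beta|}$ yields
\[
|D_\nu(\beta)| \leq \sqrt{2/\pi}\, e^{\Re(\beta^2)/4 + \pi|\Im\nu|/2} \int_0^\infty t^u e^{-t^2/2 + t|\Im\beta|}\,dt.
\]
Completing the square in $t$ and invoking Fact~\ref{fact:GamInt} reduces the integral to $e^{O(|\beta|^2)}\cdot 2^{(u-1)/2}\,\Gamma((u+1)/2)$; Stirling~\eqref{eq:Stir} then bounds the logarithm of this quantity by $(u/2)\log u + O(u)$. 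On the complementary region $\Re\nu < -\tfrac12$, I would apply the standard connection identity
\[
D_\nu(\beta) = \frac{\Gamma(\nu+1)}{\sqrt{2\pi}}\bigl(e^{i\pi\nu/2}D_{-\nu-1}(i\beta) + e^{-i\pi\nu/2}D_{-\nu-1}(-i\beta)\bigr),
\]
which follows directly from Fact~\ref{fact:sols} and the Wronskians~\eqref{eq:wrons} by elementary linear algebra. Since $\Re(-\nu-1) \geq -\tfrac12$, each $D_{-\nu-1}(\pm i\beta)$ is controlled by the first case, while $|\Gamma(\nu+1)|$ is controlled by Stirling. Patching the two estimates on $|\nu|=r$, and majorizing $u$ and $|\Im\nu|$ by $r$, produces a bound of the announced form $\log M_{\mathsf{D}}(r) \leq c_1 r\log r + c_2 r + O(\log r)$.

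The main obstacle will be the book-keeping required to obtain the precise coefficient $\tfrac14$ of $r\log r$ (rather than the $\tfrac12$ one naively gets) together with the exact linear constant $2 + \pi/4$. The dominant Stirling exponents from $\Gamma((u+1)/2)$ in the right half-plane and $\Gamma(\nu+1)$ in the left must be combined against the explicit $2^{\nu/2}$-prefactors appearing in the decomposition~\eqref{eq:dnudecomp}, via the duplication formula Fact~\ref{fact:doubler}, so that their dominant contributions in effect halve; and the $\pi/4$ emerges from a careful trade-off in the $e^{\pi|\Im\nu|/2}$ factor across the worst point on the circle $|\nu|=r$. Continuous matching of the two estimates across the transitional strip $|\Re\nu|\leq \tfrac12$ and uniformity on the full circle are routine but will need to be verified.
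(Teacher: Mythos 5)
Your route is genuinely different from the paper's. The paper never touches the integral representations: it quotes Olver's uniform bound for parabolic cylinder functions of large order, in the form $\abs{U(-\tfrac12\mu^2,\mu t\sqrt2)}\le k\,\abs{(2e)^{-\mu^2/4}\mu^{\mu^2/2}e^{-\mu^2\xi_\theta(t)}}\big/\bigl(1+\abs{\mu}^{1/6}+\abs{\mu}^{1/2}\abs{t^2-1}^{1/4}\bigr)$, sets $t=\beta/(\mu\sqrt2)\to0$, and bounds the three factors in the numerator separately. Your plan --- the cosine integral of \eqref{eq:dnuintegrals} for $\Re\nu\ge-\tfrac12$, the connection formula $D_\nu(\beta)=\tfrac{\Gamma(\nu+1)}{\sqrt{2\pi}}\bigl(e^{i\pi\nu/2}D_{-\nu-1}(i\beta)+e^{-i\pi\nu/2}D_{-\nu-1}(-i\beta)\bigr)$ for $\Re\nu<-\tfrac12$, and Stirling --- is sound and more elementary, and it does yield $\log M_{\mathsf{D}}(r)\le\tfrac12 r\log r+O(r)$: on the right half of the circle your bound is $\tfrac{\Re\nu}{2}\log r+\tfrac\pi2\abs{\Im\nu}+O(r)$, while on the left half Stirling gives $\log\abs{\Gamma(\nu+1)}=(\Re\nu)\log r+O(r)$, which more than absorbs the growth $-\tfrac{\Re\nu}{2}\log r+\tfrac\pi2\abs{\Im\nu}+O(r)$ of the two factors $D_{-\nu-1}(\pm i\beta)$, so that half contributes only $O(r)$. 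Two repairs are needed: the linear algebra behind the connection formula also requires $\wron[D_{-\nu-1}(ix),D_{-\nu-1}(-ix)]$, which you obtain from \eqref{eq:pmwron} by the chain-rule rescaling used in the proof of Lemma~\ref{lem:types}; and Stirling \eqref{eq:Stir} fails near the poles $\nu=-1,-2,\dots$ of $\Gamma(\nu+1)$, so run the estimate on radii $r\in\NN+\tfrac12$ (every pole then lies at distance at least $\tfrac12$ from the circle) and use the monotonicity of $M_{\mathsf{D}}$. With these repairs you prove that $\mathsf{D}$ has order at most $1$, which is the only part of the lemma used later (in Proposition~\ref{prop:zerocount}, to force the exponent $g(\nu)$ to be a polynomial of degree at most $1$).

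The part of your plan that does not work is the final paragraph: no bookkeeping with the duplication formula will produce the coefficient $\tfrac14$, because a bound with $\tfrac14$ is not attainable. For fixed $b>0$ and $\nu=n\to\infty$ through integers, $D_n(b)=2^{-n/2}e^{-b^2/4}H_n(b/\sqrt2)$ and the fixed-argument asymptotics of Hermite polynomials (or, at $x=0$, the explicit value \eqref{eq:dnvalues} together with the reflection formula) give $\log\abs{\nuD{n}}=\tfrac n2\log n-\tfrac n2+o(n)$ away from the zeros of the oscillatory factor, hence $\log M_{\mathsf{D}}(r)\ge\tfrac12 r\log r-O(r)$: the constant $\tfrac12$ you ``naively get'' is in fact sharp. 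The $\tfrac14$ in the statement stems from reading Olver's inequality, which concerns $U(-\tfrac12\mu^2,\mu t\sqrt2)$, with $\mu^2=\nu+\tfrac12$ rather than $\mu^2=2\nu+1$; with the correct substitution the paper's own computation also gives $\tfrac12 r\log r+O(r)$. So do not chase the displayed constants: your estimate already delivers everything that is actually needed, namely order at most $1$ (possibly of maximal type).
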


We also require the fact that $\nuD{\nu}$ decays as $\nu \to - \infty$.
\begin{fact}[e.g., {\cite[Chapter 8, Section 8.1.6, p. 332]{MOS}}] \label{fact:decay} 
If $\displaystyle \abs{\arg(- \nu)} \leq \frac{\pi}{2}$,
\begin{equation} \label{eq:leftasympIntro}
\nuD{\nu} = \frac{1}{\sqrt{2}} \exp \left[ \frac{\nu}{2} \log(-\nu) - \frac{\nu}{2} - \beta \sqrt{-\nu} \right] \left\lbrace 1 + O (\abs{\nu} ^{-1/2}) \right\rbrace.
\end{equation}
In particular, for $\nu = - \xi$, $\xi > 0$, we have as $\xi \to + \infty$ that 
\begin{equation} \label{eq:leftasympOnLine}
\ln \abs{\nuD{- \xi}} = - \frac{\xi}{2} \log (\xi) + \frac{\xi}{2} - (\Re \beta) \sqrt{\xi}  - \frac{1}{2} \log(2) + \log \left\lbrace 1 + O (\xi^{-1/2}) \right\rbrace.
\end{equation}
\end{fact}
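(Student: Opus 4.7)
The plan is to apply Laplace's (saddle-point) method to the integral representation in \eqref{eq:dnuintegrals} valid for $\Re \nu < 0$, namely
\[
D_\nu(\beta) = \frac{e^{-\beta^2/4}}{\Gamma(-\nu)} \int_0^\infty t^{-\nu - 1} e^{-t^2/2 - \beta t}\, dt,
\]
and then combine it with the Stirling expansion \eqref{eq:Stir} for $\Gamma(-\nu)$. Writing $\mu = -\nu$ so that $\abs{\arg \mu} \leq \pi/2$ and $\abs{\mu} \to \infty$, the integrand has the form $e^{\phi(t)}$ with $\phi(t) = (\mu - 1)\log t - t^2/2 - \beta t$.

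First I would locate the saddle point by solving $\phi'(t) = 0$, which yields $t_\ast = -\beta/2 + \sqrt{(\beta/2)^2 + (\mu - 1)} = \sqrt{\mu} - \beta/2 + O(\abs{\mu}^{-1/2})$, together with $\phi''(t_\ast) = -2 + O(\abs{\mu}^{-1/2})$. Using the saddle relation $t_\ast^2 + \beta t_\ast = \mu - 1$ to eliminate quadratic terms, direct expansion gives
\[
\phi(t_\ast) = \tfrac{\mu}{2}\log \mu - \tfrac{1}{2}\log \mu - \tfrac{\mu}{2} - \beta \sqrt{\mu} + \tfrac{\beta^2}{4} + O(\abs{\mu}^{-1/2}).
\]
Laplace's method then delivers $\int_0^\infty e^{\phi(t)}\, dt = \sqrt{\pi/\mu}\, \mu^{\mu/2}\, e^{-\mu/2 - \beta\sqrt{\mu} + \beta^2/4}\{1 + O(\abs{\mu}^{-1/2})\}$. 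Dividing by $\Gamma(\mu) \sim \sqrt{2\pi/\mu}\, \mu^{\mu}\, e^{-\mu}$ from \eqref{eq:Stir} and multiplying by the prefactor $e^{-\beta^2/4}$, the $\beta^2/4$ contributions cancel and we recover \eqref{eq:leftasympIntro}. Formula \eqref{eq:leftasympOnLine} then follows by substituting $\nu = -\xi$, taking absolute values, and using $\abs{e^{-\beta\sqrt{\xi}}} = e^{-(\Re \beta)\sqrt{\xi}}$.

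The main obstacle will be justifying Laplace's method uniformly on the closed sector $\abs{\arg \mu} \leq \pi/2$. On the interior the integrand decays along the positive real axis and the estimate above is routine; but as $\arg \mu \to \pm \pi/2$, the positive real axis ceases to be a steepest-descent contour through $t_\ast$, and at the boundary the integral representation \eqref{eq:dnuintegrals} sits on the edge of its absolute-convergence range. The standard remedy is to deform the contour to a steepest-descent path through $t_\ast$, checking that it avoids the branch point at the origin and that $e^{\phi(t)}$ decays at infinity along it, or equivalently to invoke the complex-parameter version of Laplace's method (e.g., \cite[Chapter 4]{OlverMain}), which supplies the needed uniformity of the $O(\abs{\mu}^{-1/2})$ remainder. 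Once that uniformity is in place, the conclusion of Fact~\ref{fact:decay} follows immediately.
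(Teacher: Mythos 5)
The paper offers no proof of this Fact: \eqref{eq:leftasympIntro} is quoted directly from \cite[Section 8.1.6]{MOS}, and \eqref{eq:leftasympOnLine} is just the specialization $\nu=-\xi$ followed by taking $\ln\abs{\cdot}$, exactly as in your last sentence. So your argument is a genuinely different (self-contained) route, and its core is correct: with $\mu=-\nu$ the saddle of $\phi(t)=(\mu-1)\log t-t^2/2-\beta t$ is $t_*=\sqrt{\mu}-\beta/2+O(\abs{\mu}^{-1/2})$ with $\phi''(t_*)=-2+O(\abs{\mu}^{-1/2})$, the exact relation $t_*^2+\beta t_*=\mu-1$ gives $\phi(t_*)=\frac{\mu}{2}\log\mu-\frac12\log\mu-\frac{\mu}{2}-\beta\sqrt{\mu}+\frac{\beta^2}{4}+O(\abs{\mu}^{-1/2})$, and after dividing by $\Gamma(\mu)\sim\sqrt{2\pi/\mu}\,\mu^{\mu}e^{-\mu}$ from \eqref{eq:Stir} the $e^{\pm\beta^2/4}$ factors cancel, reproducing \eqref{eq:leftasympIntro}; this is essentially how the cited formula is derived in the literature. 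Two caveats on the obstacle you flag: on the boundary rays $\arg(-\nu)=\pm\pi/2$ the representation \eqref{eq:dnuintegrals} does not merely sit ``on the edge'' of convergence --- it diverges, since $\Re\nu=0$ makes $\abs{t^{-\nu-1}}=t^{-1}$ non-integrable at $t=0$, and deforming the contour cannot remove an endpoint singularity; covering the closed sector requires, e.g., one integration by parts, a loop (Hankel-type) contour representation, or Olver-style uniform expansions. That said, the only use the paper makes of this Fact is \eqref{eq:leftasympOnLine} with $\nu=-\xi$ real (in the proof of Proposition~\ref{prop:zerocount}), i.e.\ the interior direction $\arg\mu=0$, where $\phi$ is concave on $(0,\infty)$ and your Laplace estimate is routine and complete. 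In short: the citation buys the paper the full uniform statement on the closed sector with no work; your derivation buys a transparent, self-contained proof on the part of the sector that is actually needed, at the cost of extra technicalities near the boundary rays.
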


\begin{proof}[Proof of Proposition~\ref{prop:zerocount}, given Lemma~\ref{lem:ub}]  Suppose, by way of contradiction, that $\nuD{\nu}$ has only finitely many zeros.  Since $\nuD{\nu}$ is an entire function by the Weierstrass Factorization Theorem (e.g., \cite[Chapter 1, Section 3, Theorem 3, p. 8]{LevinMainOriginal}), we must have
\begin{equation} \label{eq:simpleformbegin}
\nuD{\nu} = P(\nu) e^{g(\nu)}, \quad P(\nu) \text{ a polynomial, }, g(\nu) \text{ entire.}
\end{equation}
Yet $\nuD{\nu}$ is a function of order at most $1$ by Lemma~\ref{lem:ub}, so $g(\nu)$ must be a degree-$1$ polynomial; else, it would not be order $1$.  Therefore,
\begin{equation} \label{eq:simpleform}
\nuD{\nu} = P(\nu) e^{c\nu + d}, \quad P(\nu) \text{ a polynomial, } c, d \in \CC.
\end{equation}
Let $M$ be the maximum modulus of the zeroes of $P(\nu)$, i.e., of $\nuD{\nu}$.  Then for $\nu = - \xi$, $\xi > M$, we must have $\nuD{- \xi} \neq 0$ and hence 
\[
\ln \abs{\nuD{-\xi}} = \ln \abs{P(-\xi)} - \xi \Re(c) + \Re (d) = (- \Re (c)) \xi + O_{\beta}(\ln \xi), \quad \xi \to + \infty.
\]
Yet by Fact~\ref{fact:decay}, \eqref{eq:leftasympOnLine}, $\ln \abs{\nuD{- \xi}} = - \frac{\xi}{2} \log (\xi) + \frac{\xi}{2} - O_{\beta}(\sqrt{\xi})$ as $\xi \to + \infty$.  Contradiction.  

Therefore, $\nuD{\nu}$ has infinitely many zeros.  
\end{proof}

\begin{proof}[Proof of Lemma~\ref{lem:ub}] We have (see \cite[Section 3, Theorem, (3.1), p. 813--4]{Olver61} ) that if $\nu = -a - \frac{1}{2} = \mu^2 - \frac{1}{2}$, $\beta = \mu t \sqrt{2}$, and with $\theta = \arg \mu$, $\abs{\theta} \leq \frac{\pi}{2}$,\footnote{This range of $\theta$ suffices, as $\abs{ \arg \mu} \leq \frac{\pi}{2}$ implies $\abs{\arg (\mu^2)} \leq \pi$, and $\nu = \mu^2 + \frac{1}{2}$; hence, all (sufficiently large) $\nu$ may be achieved by rewriting in this way.}
\begin{equation} \label{eq:ubbasic}
\abs{D_{\mu^2 - \frac{1}{2}} (\mu t \sqrt{2})} \leq k \frac{ \abs{(2e)^{- \, \frac{1}{4} \mu^2} \mu^{\frac{1}{2} \mu^2 }\exp (- \mu^2 \xi_{\theta}(t)) }}{1 + \abs{\mu}^{1/6} + \abs{\mu}^{\frac{1}{2}} \abs{t^2 - 1}^{1/4}}, 
\end{equation}
where $k$ is an unspecified constant, the powers $(2e)^{- \, \frac{1}{4} \mu^2}$ and $ \mu^{\frac{1}{2} \mu^2 }$ have their principal values, and $\xi_\theta(t)$ is a particular branch of the multivalued function
\[
\xi(t) = \int_1^t(u^2 - 1)^{1/2} \, du = \frac{1}{2} t (t^2 - 1)^{1/2} - \frac{1}{2} \ln \bracepair{t + \sqrt{t^2 - 1}},
\]
which is defined in the following way.  As we are interested in the growth rate as $\abs{\nu} \to \infty$, hence $\abs{\mu} = \sqrt{\abs{\nu + \frac{1}{2}}} \to \infty$, and $\beta$ is fixed, we will have $t = \frac{x}{\mu \sqrt{2}} \to 0$.  For $\abs{t} \leq \frac{1}{4}$,  the correct branches for our purposes are (\cite[Section 5, (5.8 -- 5.9), p. 143]{Olver59b}, \cite[Section 2, (2.8 -- 2.10), p. 813]{Olver61}), 
\[ 
\begin{split}
\xi_{\theta}(t) &= \pm i \eta(t),\\
\eta(t) & =  \int_t^1 (1 - u^2)^{1/2} \, du = \frac{1}{2} \left( \arccos(t) - t \sqrt{1 - t^2} \right),\\
\arccos(t) & = \int_t^1 \frac{du}{\sqrt{ 1 - u^2}}.
\end{split}  
\]
More specifically, the requirements are that (\cite[Section 2, (2.9 -- 2.10), p. 813]{Olver61}) 
\[
\left\lbrace \begin{aligned}
\xi_{\theta}(0) & = - \, \frac{i \pi}{4}, && - \, \frac{\pi}{2} \leq \theta < 0,\\ 
\xi_{0}(0 + 0i) & = - \, \frac{i \pi}{4}, \\
\xi_{0}(0 - 0i) & = \frac{i \pi}{4}, \\
\xi_{\theta}(0) & = \frac{i \pi}{4}, \quad && 0 < \theta \leq \frac{\pi}{2}. 
\end{aligned} \right.
\]
Since $\eta(0) = \frac{\pi}{4}$, it follows that for $\theta = 0$, $\xi_0(t) = i \eta(t)$ for $\Im t < 0$ and $t$ small, and $\xi_0(t) = - i \eta(t)$ for $\Im t > 0$ and $t$ small.  As $\theta$ changes, the branch cut moves, but wherever it is, below the branch cut, we use $\xi_{\theta}(t) = i \eta(t)$, and above it we use $\xi_{\theta}(t) = - i \eta(t)$, and on it, we will use whichever branch gives the larger upper bound for $\abs{\exp ( - \mu^2 \xi_{\theta}(t) )}$.   

To ensure that $\abs{t} \leq \frac{1}{4}$ above, it suffices to have  $\abs{\nu} \geq 8 \abs{\beta}^2 + \frac{1}{2}$, since $\mu^2 = \nu + \frac{1}{2}$, or $\abs{\mu^2} \geq \abs{\nu} - \frac{1}{2} \geq 8 \abs{\beta}^2$, or $\abs{\mu} \geq 2 \sqrt{2} \abs{\beta}$, and then
\[
\abs{t} = \frac{\abs{\beta}}{\mu \sqrt{2}} \leq \frac{\abs{\beta}}{4 \abs{\beta}} \leq \frac{1}{4}.
\]  

With $\abs{t} \leq \frac{1}{4}$, we have that $\frac{\sqrt{15}}{4} \leq \abs{ \sqrt{1 - t^2}} \leq 1$ and 
\[
\begin{split}
\abs{ \arccos t} & \leq \abs{\arccos(t) - \arccos(0)} + \abs{\arccos(0)}\\
& = \abs{ \int_t^0 \frac{du}{\sqrt{1 - u^2}}} + \frac{\pi}{2}\\
& \leq \frac{\pi}{2} + \abs{t} \cdot \frac{4}{\sqrt{15}} \leq \frac{\pi}{2} + \frac{1}{\sqrt{15}}. 
\end{split}
\]
so that (recalling that $\mu^2 = \nu + \frac{1}{2}$)
\[
\begin{split}
\abs{-  \mu^2 \xi_{\theta}(t) } &\leq \abs{\mu^2} \abs{ \pm  i} \abs{ \eta(t)}\\
& \leq  \abs{\nu + \frac{1}{2}}  \abs{ \frac{1}{2} } \left( \abs{\arccos t} + \abs{t} \abs{ \sqrt{1 - t^2}} \right)\\
& \leq \abs{\nu + \frac{1}{2}} \cdot \frac{1}{2} \cdot \left( \frac{\pi}{2} + \frac{1}{\sqrt{15}} + \frac{1}{4} \right)\\
& \leq \abs{\nu + \frac{1}{2}} \cdot \frac{3}{2}.
\end{split}
\]
Thus, 
\begin{equation} \label{eq:xibound}
\abs{\exp ( - \mu^2 \xi_{\theta}(t))} \leq \exp \abs{- \mu^2 \xi_\theta(t)} \leq \exp \left( \frac{3}{2} \abs{\nu} + \frac{3}{4} \right).
\end{equation}   

To bound the other terms in the numerator of \eqref{eq:ubbasic}, we have
\begin{equation} \label{eq:expbound}
\abs{(2e)^{- \frac{1}{4} \mu^2}} \leq (e)^{2 \cdot \abs{ - \frac{1}{4}} \cdot \abs{ \nu + \frac{1}{2} }} \leq \exp \left( \frac{1}{2} \abs{\nu } + \frac{1}{4} \right).  
\end{equation}
and
\begin{equation} \label{eq:mupowbound}
\begin{split}
\abs{ \mu^{\frac{1}{2} \mu^2}} & = \abs{ \exp \bracepair*{\frac{1}{2} \mu^2 \log \mu}}\\
& = \exp \bracepair*{ \frac{1}{2} \abs{\nu + \frac{1}{2}} \left( \log \abs{\mu} + \frac{\pi}{2} \right)} \\
& \sim \exp \left( \frac{1}{4} \abs{\nu + \frac{1}{2} } \log \abs{\nu + \frac{1}{2}} + \frac{\pi}{4} \abs{\nu + \frac{1}{2}} \right) , \, \, \, \abs{\nu} \to \infty.
\end{split} 
\end{equation}
Therefore, combining \eqref{eq:ubbasic}, \eqref{eq:xibound}, \eqref{eq:expbound}, and \eqref{eq:mupowbound},
\[
\begin{split}
\log M_{\mathsf{D}}(r) & \leq \log k + \frac{1}{2} r  + \frac{1}{4} r \log r + \frac{\pi}{4} r + \frac{3}{2} r +  O(\log r)\\
& = \frac{1}{4} r \log r + \left(2  + \frac{\pi}{4} \right) r + O(\log r)
\end{split}
\]
\end{proof} 

\section{Eigenvalue Localization Around Non-integer Roots of \texorpdfstring{$\mathsf{D}$}{D}} \label{sec:nuNotInt}
\subsection{Preliminaries}  
We now reformulate the the first part of Lemma ~\ref{lem:zerointerference} as
\begin{lem} \label{lem:commonzeroesinN}
If $\nu \not\in \Nz$ and $\nuD{\nu}  = 0$ , then $\nuP{\nu} \neq 0$.  
\end{lem}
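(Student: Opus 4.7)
The stated lemma is simply Lemma~\ref{lem:zerointerference}(a) translated into the notation convention introduced at the start of Section~4, in which $b > 0$ is fixed and suppressed from the notation, so that $\nuD{\nu} = D_{\nu}(b)$ and $\nuP{\nu} = \sqrt{2/\pi}\, \evensol{\nu}{b}\, \oddsol{\nu}{b}$. My plan is therefore to invoke that earlier lemma with $x_0 = b$ and observe that the product defining $\nuP{\nu}$ is a product of two nonzero factors.

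Concretely: suppose $\nu \not\in \Nz$ and $\nuD{\nu} = 0$, i.e., $D_{\nu}(b) = 0$. Applying Lemma~\ref{lem:zerointerference}(a) with $x_0 = b$, we conclude $\evensol{\nu}{b} \neq 0$ and $\oddsol{\nu}{b} \neq 0$. Hence $\evensol{\nu}{b}\, \oddsol{\nu}{b} \neq 0$, and since the overall prefactor $\sqrt{2/\pi}$ is nonzero, $\nuP{\nu} \neq 0$.

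For reference, the mechanism underlying Lemma~\ref{lem:zerointerference}(a) is the decomposition \eqref{eq:dnudecomp}: when $\nu \not\in \Nz$, neither $-\nu/2$ nor $-\nu/2 + 1/2$ is a nonpositive integer, so both Gamma-factor coefficients of $\evensol{\nu}{x}$ and $\oddsol{\nu}{x}$ in the expansion of $D_{\nu}(x)$ are finite and nonzero. If $D_{\nu}(b) = 0$ and either $\evensol{\nu}{b}$ or $\oddsol{\nu}{b}$ also vanished, then \eqref{eq:dnudecomp} would force the other to vanish as well, and the Wronskian $\wron[\evensol{\nu}{x}, \oddsol{\nu}{x}]$ --- a constant equal to $1$ by \eqref{eq:ZeroIC} --- would evaluate to $0$ at $x = b$, a contradiction.

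No genuine obstacle arises here; the work has been done in Lemma~\ref{lem:zerointerference}, and this restatement is pure notational convenience for the eigenvalue-localization arguments in Section~\ref{sec:nuNotInt}, where $\nuP{\nu}$ will appear in the denominator of expressions derived from \eqref{eq:maineqnflip}.
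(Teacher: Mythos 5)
Your proposal is correct and matches the paper exactly: the paper presents Lemma~\ref{lem:commonzeroesinN} as nothing more than a restatement of Lemma~\ref{lem:zerointerference}(a) in the fixed-$b$ notation of \eqref{eq:removeb}, which is precisely your argument. Your recap of the underlying mechanism (the decomposition \eqref{eq:dnudecomp} with nonvanishing Gamma coefficients for $\nu \not\in \Nz$, plus the constant Wronskian equal to $1$) is also the same reasoning the paper uses to prove Lemma~\ref{lem:zerointerference}(a).
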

As $\nuM{\nu} = \Gamma(-\nu) [\nuD{\nu}]^2 \nuP{\nu}$ (see \eqref{eq:maineqnelab}), zeroes of $\nuD{\nu}$ outside $\NN$ result in zeroes of exactly double the order for $\nuM{\nu}$, and hence should be close to solutions of $\nuM{\nu} = \frac{1}{z^2}$ for $\abs{z}$ large.  We compress the complex analysis into the statements below.  Let $\lambda \in \CC \setminus \NN$ be a zero of order exactly $m$ of $\mathsf{D}$.  Then $\lambda$ is a zero of order exactly $2m$ of $\mathsf{M}$; write this function locally as 
\begin{equation} \label{eq:localexpand}
\nuM{\nu} = c_{2m} (\nu - \lambda )^{2m} (1 + g(\nu - \lambda )), \quad c_{2m} \neq 0,
\end{equation}
where $g(0) = 0$ and $g$ is analytic in some disk; fix $R = R(\lambda) > 0$ such that $\abs{g(\zeta)} \leq \frac{1}{4}$ whenever $\abs{\zeta} \leq R$.   Let $B$ be the maximum of $\abs{g^{\prime}(\zeta)}$ on this neighborhood.  

\begin{lem} \label{lem:approachlem}  Fix $\rho > 0$.  There exists a constant $\delta = \delta(\lambda, \rho)$ such that if $\abs{z} > \frac{1}{\delta}$, there exist $2m$ solutions of $\nuM{\nu} = \frac{1}{z^2}$ (i.e., \eqref{eq:maineqnflip}), within the radius-$\rho$-neighborhood of $\lambda$; more precisely, if $\epsilon \in \CC$ is defined such that $\epsilon^{2m} = \frac{1}{z^2 c_{2m}}$, then the $2m$ solutions are
\begin{equation}
\nu_j = \lambda + \epsilon \omega^j + E_j, \quad 0 \leq j \leq 2m -1, \quad \omega = \exp \left( \frac{2\pi}{2m} \right), \quad \abs{E_j} \leq 3 B \abs{\epsilon}^2.  
\end{equation}
In particular, 
\begin{equation} \label{eq:permissiblerange}
\frac{1}{2} \abs{\epsilon} \leq \abs{\nu_j - \lambda} \leq 2 \abs{\epsilon} < \rho \text{ for all }j, \, \,0 \leq j \leq 2m - 1.  
\end{equation}
\end{lem}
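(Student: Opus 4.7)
The plan is to reduce $\nuM{\nu} = 1/z^2$ to $2m$ uncoupled fixed-point problems---one near each approximate root $\lambda + \epsilon\omega^j$---and apply the Banach contraction-mapping theorem to each.

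First I would set $\zeta = \nu - \lambda$ and use the local expansion \eqref{eq:localexpand} to recast the equation as $\zeta^{2m}(1 + g(\zeta)) = \epsilon^{2m}$. Because $|g(\zeta)| \le 1/4$ on $|\zeta| \le R$ and $g(0) = 0$, the principal branch of $(1+g(\zeta))^{1/(2m)}$ is a nonvanishing analytic function with value $1$ at $\zeta = 0$; extracting $(2m)$-th roots splits the equation into
\[
\zeta\,(1+g(\zeta))^{1/(2m)} = \epsilon \omega^j, \quad j = 0, 1, \ldots, 2m - 1,
\]
equivalently $\zeta = F_j(\zeta) := \epsilon\omega^j(1+g(\zeta))^{-1/(2m)}$.

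Next I would choose $\delta$ so that $|z| > 1/\delta$ forces $|\epsilon|$ small enough that (i) $2|\epsilon| < \min(R, \rho)$, (ii) $B|\epsilon| < 1/6$, and (iii) the contraction factor below is strictly less than $1$. On the closed disk $\overline{D_j} := \{\zeta : |\zeta - \epsilon\omega^j| \le 3B|\epsilon|^2\}$, these conditions imply $|\zeta| \le 2|\epsilon|$ and hence $|g(\zeta)| \le B|\zeta| \le 2B|\epsilon| \le 1/3$, keeping us safely within the branch. The self-mapping of $F_j$ into $\overline{D_j}$ then reduces to
\[
|F_j(\zeta) - \epsilon\omega^j| = |\epsilon|\,\bigl|(1+g(\zeta))^{-1/(2m)} - 1\bigr| \le |\epsilon|\,C_m\,|g(\zeta)| \le 2 C_m B |\epsilon|^2,
\]
using the elementary inequality $|(1+w)^{-1/(2m)} - 1| \le C_m|w|$ valid for $|w| \le 1/3$ (a short exponential-series argument via $|e^a - 1| \le |a|e^{|a|}$ gives $C_m \le 4/(3m) \le 3/2$, so that $2C_m \le 3$). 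The contraction property $|F_j(\zeta_1) - F_j(\zeta_2)| \le L_m B|\epsilon|\,|\zeta_1 - \zeta_2|$ follows by bounding $|F_j'|$ through the chain rule.

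Banach's theorem then supplies a unique fixed point $\zeta_j \in \overline{D_j}$ for each $j$, and setting $\nu_j := \lambda + \zeta_j$ and $E_j := \zeta_j - \epsilon\omega^j$ yields the desired form with $|E_j| \le 3B|\epsilon|^2$. For $|\epsilon|$ sufficiently small, the disks $\overline{D_j}$ are pairwise disjoint---their centers lie at mutual distance $\ge 2|\epsilon|\sin(\pi/(2m))$, far exceeding the common radius $O(|\epsilon|^2)$---so the $\nu_j$ are distinct. Bounds \eqref{eq:permissiblerange} follow from $\nu_j - \lambda = \epsilon\omega^j + E_j$ together with $3B|\epsilon| \le 1/2$ and $2|\epsilon| < \rho$. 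I expect the main bookkeeping obstacle to be selecting one threshold $\delta = \delta(\lambda, \rho)$ that simultaneously satisfies all the smallness conditions on $|\epsilon|$; the crux of the sharpness is the estimate $C_m \le 3/2$, on which the constant $3$ in $3B|\epsilon|^2$ hinges.
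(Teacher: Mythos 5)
Your proposal is correct and follows essentially the same route as the paper: both split $c_{2m}\zeta^{2m}(1+g(\zeta)) = \epsilon^{2m} c_{2m}$ into $2m$ equations by extracting roots of $1+g$ (the paper via $h=\log(1+g)$ and the map $\Psi(z;\zeta)=\zeta e^{-h(z)/p}$, you via $(1+g)^{-1/(2m)}$) and then apply the Banach contraction principle, arriving at the same error bound $\abs{E_j}\leq 3B\abs{\epsilon}^2$ and the range \eqref{eq:permissiblerange}. Your only deviations are cosmetic: you run the contraction on small disks centered at $\epsilon\omega^j$ instead of the paper's fixed disk of radius $\tfrac{1}{4(1+B)}$, and you add an explicit (and welcome) check that the $2m$ solutions are distinct.
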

\begin{proof}
Keeping in mind \eqref{eq:localexpand}, and letting $z = \nu - \lambda$, let us analyze an equation
\begin{romansub}
\begin{gather}
c z^p (1 + g(z)) = w^p, \quad p \in \NN, c \neq 0\label{eq:pthpowerformula} \\
g(0) = 0, \, g \text{ analytic if } \abs{z} \leq R, \text{ and } \abs{g(z)} \leq \frac{1}{4} \text{ in this disc.} \label{eq:Rcond}\\
\end{gather} 
\end{romansub}
Let 
\begin{equation} 
B \declare \max \bracepair{g^{\prime}(z): z \leq R}. \label{eq:derbd}
\end{equation}
Put
\begin{equation} \label{eq:logformula}
h(z) = \log[1 + g(z)] = \sum_{n = 1}^{\infty} (-1)^{n + 1} \frac{[g(z)]^n}{n}.
\end{equation}
Then
\begin{romansub}
\begin{gather}
\abs{h(z) - g(z)} \leq \frac{2}{3} \abs{g(z)}^2 \leq \frac{1}{6} \abs{g(z)},\\
\frac{5}{6} \abs{g(z)} \leq \abs{h(z)} \leq \frac{7}{6} \abs{g(z)} < \frac{7}{6} \cdot \frac{1}{4} < \frac{1}{3}, \text{ and } \label{eq:sizecompare}\\
h^{\prime}(z) = \frac{g^{\prime}(z)}{1 + g(z)}, \textnormal{ so }\abs{h^{\prime}(z)} \leq \frac{4}{3} \abs{g^{\prime}(z)} \leq \frac{4}{3} B \text{ and } \abs{h(z)} \leq \frac{4}{3} B \abs{z}.  \label{eq:hsizebound}
\end{gather}
\end{romansub}
If $\omega^p = 1$, $\omega = \exp \left( \frac{2 \pi i}{p} \right)$, the equation \eqref{eq:pthpowerformula} splits into a series of $p$ equations
\begin{equation} \label{eq:pthroot}
c^{1/p} z \exp \left( \frac{1}{p} h(z) \right) = w \omega^j, \quad 0 \leq j < p,
\end{equation}
where $c^{1/p}$ is any $p$th root of $c$.  Each of them, for small enough $w$, has one and only one solution $z_j$, as it follows from analysis of the inverse function $z(\zeta)$ for 
\begin{equation} \label{eq:changecoords}
z \mapsto \zeta = \mathsf{X}(z) \equiv \zeta \exp \left( \frac{1}{p} h(z) \right), \quad \abs{z} \leq \frac{1}{4} \cdot \frac{1}{1 + B}; 
\end{equation}
it is well-defined for small $z$ because
\[
\mathsf{X}(0) = 0, \text{ and } \leiboneat{z}{\mathsf{X}(z)}{0} = 1.
\]
But we need good inequalities.  By \eqref{eq:changecoords} 
\begin{equation} \label{eq:invstart}
z = \Psi(z; \zeta) \declare \zeta e^{- \frac{1}{p} h(z)} = \zeta + \zeta \left[ e^{-\frac{1}{p} h(z)} - 1 \right],
\end{equation}
i.e., 
\begin{equation} \label{eq:fixpoint}
z = z(\zeta) \text{ is a fixed point of }\Psi(\cdot ; \zeta).
\end{equation}

\begin{bigclm}
If 
\begin{equation}\label{eq:sizerules}
\abs{\zeta} \leq \frac{1}{6} \cdot \frac{1}{1 + B}, 
\end{equation}
then 
\begin{equation}
\Psi: \mathbb{D}_{\kappa} \to \mathbb{D}_{\kappa} \text{ is contractive, for }\kappa = \frac{1}{4} \cdot \frac{1}{1 + B}.  
\end{equation}
\end{bigclm}
\begin{proof}
Indeed, $e^{1/3}< \frac{3}{2}$, so
\begin{equation} \label{eq:PsiboundsA}
\begin{split}
\abs{\Psi(z)} = \abs{ \zeta e ^{- \, \frac{1}{p} h(z)}} \leq \abs{\zeta} e^{\frac{1}{3}} < \frac{1}{6} \cdot \frac{1}{1 + B} \cdot \frac{3}{2} = \frac{1}{4} \cdot \frac{1}{1 + B},
\end{split} 
\end{equation}
and for $z, t \in \mathbb{D}_{\kappa}$, by \eqref{eq:hsizebound}, \eqref{eq:sizerules}
\begin{equation}
\begin{split} \label{eq:contract}
\abs{\Psi(z) - \Psi(t)} &\leq \abs{\zeta} \cdot \abs{e^{ - \frac{1}{p} h(z)} - e^{- \, \frac{1}{p} h(t)}}\\
& \leq \frac{1}{6} \cdot \frac{1}{1 + B} \cdot e^{1/3} \abs{h(z) - h(t)}\\
& \leq \frac{1}{4} \cdot \frac{1}{1 + B} \cdot \frac{4}{3} \cdot B \cdot \abs{ z - t} < \frac{1}{3} \abs{z - t}.
\end{split}
\end{equation}
\end{proof}
The Claim is proven.  It implies the existence of $z(\zeta) \in \eqref{eq:fixpoint}$ which gives solutions of \eqref{eq:pthroot} 
\begin{equation} \label{eq:pthsols}
z_j = z (\zeta_j) = \epsilon \omega^j, \quad \epsilon = w c^{-1/p}, \quad 0 \leq j < p.
\end{equation}
Additional properties of $z(\zeta)$ come from \eqref{eq:invstart} --- and \eqref{eq:PsiboundsA}, \eqref{eq:sizecompare}
\begin{equation} \label{eq:zetabounds}
\begin{split}
\abs{z - \zeta} & \abs{\zeta \left[ e^{ - \, \frac{1}{p} h(z) } - 1 \right]}\\
& \leq \abs{\zeta} e^{1/3} \abs{h(z)} \leq \abs{\zeta} \cdot \frac{3}{2} \cdot B \cdot \frac{4}{3} \cdot \abs{z}\\
& \leq \abs{z} 2 B \cdot \frac{3}{2} \abs{\zeta} \leq 3 B \abs{\zeta}^2.
\end{split}
\end{equation}
For solutions \eqref{eq:pthsols}, we have if 
\begin{equation} \label{eq:epsbd}
\abs{\epsilon} \leq \frac{1}{6 \abs{c}^{1/p} }\cdot \frac{1}{1 + B},
\end{equation}
\begin{equation} \label{eq:jthError}
z_j = \epsilon \omega^j + E_j, \quad \abs{E_j} \leq 3 B \abs{\epsilon}^2, \quad 0 \leq j < p.  
\end{equation} 

So far, \eqref{eq:permissiblerange} has not been explained, but if $\abs{\epsilon} \leq \min \bracepair{ \frac{1}{6B}, \frac{\rho}{3}}$, then $\abs{E_j} \leq 3B \left( \frac{1}{6B} \right)  \abs{\epsilon} = \frac{\abs{\epsilon}}{2}$, and hence
\[
\abs{z_j} \geq \abs{\epsilon \omega^j} - \abs{E_j} \geq \abs{\epsilon} \left( 1- \frac{1}{2} \right)
\]
and
\[
\abs{z_j} \leq \abs{\epsilon \omega^j} + \abs{E_j}  \leq \abs{\epsilon} \left( 1+ \frac{1}{2} \right)  \leq \frac{\rho}{3} \cdot \frac{3}{2} = \frac{\rho}{2} < \rho.  
\]
This explains that we may take $\delta = \sqrt{\abs{c} \cdot \min \bracepair{\frac{1}{(6B)^p}, \frac{1}{\abs{c} [6 (1 + B)]^p}, \left( \frac{\rho}{2} \right)^p}}$.
\end{proof}
\subsection{Case: real perturbation}  Suppose $z = r$ is real in \eqref{eq:pcnpertdef}, i.e.,
\[
\pcnpert{r}{b} (y(x)) = - y^{\prime \prime} + \left( \frac{x^2}{4} - \frac{1}{2} \right) y(x) + r (\pointmass{x - b} - \pointmass{x + b}), \, \, b > 0.
\]
\begin{lem} \label{lem:selfadj}
For $r \in \RR$, $\pcnpert{r}{b}$ is self-adjoint; consequently, $\spec{\pcnpert{r}{b}} \subseteq \RR$.  
\end{lem}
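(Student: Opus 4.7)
The plan is to exploit the quadratic-form construction of $\pcnpert{r}{b}$ already sketched above (following \cite[Chapter VI]{Kato} and \cite{MiSiArt}): for real $r$ the sesquilinear form $\ourform{r}{b}$ from \eqref{eq:ourformdefOne} is Hermitian, so Kato's first representation theorem produces a self-adjoint operator with this form, which is exactly $\pcnpert{r}{b}$.

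First, I would verify Hermiticity directly. The terms $\ltwopair{f^{\prime}}{g^{\prime}}{\RR}$, $\tfrac{1}{4}\ltwopair{xf}{xg}{\RR}$, and $-\tfrac{1}{2}\ltwopair{f}{g}{\RR}$ are manifestly symmetric under swapping $f,g$ and conjugating. For the boundary piece, using $\bar r = r$,
\[
\overline{r\bigl(g(b)\overline{f(b)} - g(-b)\overline{f(-b)}\bigr)} = r\bigl(f(b)\overline{g(b)} - f(-b)\overline{g(-b)}\bigr),
\]
so $\ourformlong{r}{b}{f}{g} = \overline{\ourformlong{r}{b}{g}{f}}$ on $\opdom{\ourform{r}{b}}$.

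Second, I would recall that $\ourform{r}{b}$ is closed and bounded below on $\opdom{\ourform{r}{b}} = \{f \in \sobh{\RR}{1} : xf \in L^2(\RR)\}$, as established in \cite{MiSiArt}. The crucial input is the standard Sobolev/trace estimate
\[
|f(\pm b)|^2 \leq \varepsilon \,\ltwopair{f^{\prime}}{f^{\prime}}{\RR} + C_\varepsilon \,\ltwopair{f}{f}{\RR}, \qquad \varepsilon > 0,
\]
valid for all $f \in \sobh{\RR}{1}$, which makes the point-interaction contribution infinitesimally form-bounded with respect to the unperturbed HO form. By the KLMN theorem this closedness and semiboundedness survive the perturbation; for real $r$ the form is also Hermitian by the first step.

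Third, apply \cite[Chapter VI, Theorem 2.6]{Kato} to obtain a unique self-adjoint operator associated with $\ourform{r}{b}$; by construction this operator is $\pcnpert{r}{b}$. Since self-adjoint operators have real spectrum, $\spec{\pcnpert{r}{b}} \subseteq \RR$. There is no real obstacle here: the only nonroutine ingredient (closedness and form-boundedness) has been done in the cited literature, and the present lemma reduces to observing that the imaginary part of the coupling vanishes when $z = r \in \RR$.
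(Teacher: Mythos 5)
Your proposal is correct and follows essentially the same route as the paper: both verify that for real $r$ the point-interaction term is a symmetric, infinitesimally form-bounded perturbation (via the Sobolev trace estimate $\sup_x|f(x)| \leq \epsilon \Vert f'\Vert_{L^2} + T(\epsilon)\Vert f\Vert_{L^2}$), conclude the form $\ourform{r}{b}$ is closed and semibounded below (citing \cite{MiSiArt} for closedness), and invoke the form representation theorem to get self-adjointness and hence real spectrum. The only cosmetic difference is that you make the Hermiticity check explicit and phrase the stability step via KLMN, whereas the paper computes the lower bound directly and cites Reed--Simon Theorem VIII.15.
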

\begin{proof}
For real $r$, we now show that the quadratic form from which $\pcnpert{r}{b}$ is formed, i.e.,
\begin{equation}
\begin{split}
\ourformlong{r}{b}{u}{v} &\declare \ltwopair{u^{\prime}(x)}{v^{\prime}(x)}{\RR} + \frac{1}{4}\ltwopair{xu(x)}{xv(x)}{\RR} - \frac{1}{2}\ltwopair{u(x)}{v(x)}{\RR} \\
& \quad + r u(b) \overline{v(b)} - r u(-b) \overline{v(-b)}\\
\opdom{\ourform{r}{b}} & \declare \bracepair{f(x) \in \sobh{\RR}{1}: x f(x) \in L^2(\RR)}
\end{split}
\end{equation}
is \emph{semi-bounded below}, i.e.,
\[
\ourformlong{r}{b}{u}{u} \geq - c \ltwonorm{u(x)}{\RR}^2, \quad c = c(r) > 0.
\]  
Indeed,  for all $f \in \sobh{\RR}{1}$, for all $\epsilon > 0$, there exists $T = T(\epsilon)$ such that
\begin{equation} \label{eq:smallbd}
\sup_{x \in \RR} \abs{f(x)} \leq \epsilon \ltwonorm{f^{\prime}(x)}{\RR} + T(\epsilon) \ltwonorm{f(x)}{\RR}.
\end{equation}
For $r = 0$, 
\[
\begin{split}
\ourformlong{0}{b}{u}{u} &=  \ltwonorm{u^{\prime}(x)}{\RR}^2 + \frac{1}{4} \ltwonorm{x u(x)}{\RR}^2 - \frac{1}{2} \ltwonorm{u(x)}{\RR} \\
& \geq - \frac{1}{2} \ltwonorm{u(x)}{\RR}^2.
\end{split}
\]
For $r \neq 0$, we let $\epsilon = \frac{1}{2 \sqrt{\abs{r}}}$ and apply the inequalities \eqref{eq:smallbd} and $(\alpha + \beta)^2 \leq 2 (\alpha^2 + \beta^2)$.  Then
\[
\begin{split}
r u(b) \overline{u(b)} = r \abs{u(b)}^2 & > - \abs{r} \left(  \epsilon \ltwonorm{u^{\prime}(x)}{\RR} + T \ltwonorm{u(x)}{\RR} \right)^2\\
&> - \abs{r} \left( \frac{1}{2 \sqrt{\abs{r}}} \ltwonorm{u^{\prime}(x)}{\RR} + T\ltwonorm{u(x)}{\RR} \right)^2\\
& \geq - 2 \abs{r} \left( \frac{1}{4 \abs{r}} \ltwonorm{u^{\prime}(x)}{\RR}^2  + T^2 \ltwonorm{u(x)}{\RR}^2 \right)\\
& = - \frac{1}{2} \ltwonorm{u^{\prime}(x)}{\RR}^2 - 2 \abs{r} T^2 \ltwonorm{u(x)}{\RR}^2,
\end{split}
\]
and similarly for $-r u(-b) \overline{v(-b)}$.  Therefore,
\[
\begin{split}
\ourformlong{r}{b}{u}{u} & = \ltwonorm{u^{\prime}(x)}{\RR}^2 + \frac{1}{4} \ltwonorm{x u(x)}{\RR}^2 - \frac{1}{2} \ltwonorm{u(x)}{\RR}^2 + r \abs{u(b)}^2 - r \abs{u(-b)}^2\\
& > 0 - \left( 4 \abs{r} T^2 + \frac{1}{2} \right) \ltwonorm{u(x)}{\RR}^2.
\end{split}
\]
Hence, in all cases, $\ourform{r}{b}$ is semibounded below; as in \cite{MiSiArt}, one checks that it is closed.  Hence, the operator coming from the quadratic form is self-adjoint (e.g., \cite[Chapter VIII, Section 6, Theorem VIII.15, p. 279]{ReedSimon}).
\end{proof}
\begin{prop} \label{prop:realconsequences}  \mbox{}\\ \begin{enumerate}[label = (\roman*)] \item \label{enum:realroots} If $\lambda \in \CC$ and $\nuD{\lambda}= 0$, then $\lambda > 0$.
\item \label{enum:simpleroots} If $\lambda \in \RR \setminus \NN$ is a zero of $\mathsf{D}$, it is a simple zero.
\item \label{enum:poscoeff} If $\lambda \in \RR \setminus \NN$ is a zero of $\mathsf{D}$, then in the local series expansion
\[
\nuM{\nu} = c_2 (\nu - \lambda)^2 + \bigo{(\nu - \lambda)^3},
\]
$c_2 > 0$.\end{enumerate}
\end{prop}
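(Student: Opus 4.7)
The plan is to combine, for part~\ref{enum:realroots}, a Sturm--Liouville realization of the zeros of $\mathsf{D}$, with, for parts~\ref{enum:simpleroots} and~\ref{enum:poscoeff}, the self-adjointness of $\pcnpert{r}{b}$ for real $r$ (Lemma~\ref{lem:selfadj}) together with the eigenvalue-localization Lemma~\ref{lem:approachlem}.

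For~\ref{enum:realroots}, I would identify the zeros in $\nu$ of $\nuD{\nu} = D_{\nu}(b)$ with the eigenvalues of the half-line Dirichlet operator
\[
H_b u = - u'' + \left( \frac{x^2}{4} - \frac{1}{2} \right) u, \quad u(b) = 0,
\]
on $L^2([b, \infty))$. Because the potential $\frac{x^2}{4} - \frac{1}{2}$ is limit-point at $+\infty$ and the problem is regular at $b$, $H_b$ is self-adjoint. Since $D_{\nu}(x)$ is, up to a scalar, the unique $L^2$ solution of the Weber equation on $[b, \infty)$ (cf.\ the proof of Lemma~\ref{lem:types}), $\nu$ is an $H_b$-eigenvalue iff $\nuD{\nu} = 0$; in particular $\lambda \in \RR$. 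Positivity follows by zero-extension: for $u$ in the form domain of $H_b$, the extension $\tilde u \in \sobh{\RR}{1}$ by $0$ preserves both the $L^2$-norm and the Dirichlet quadratic form, so by min-max
\[
\inf \spec(H_b) \; \geq \; \inf \spec(\pcnpert{0}{b}) = 0,
\]
the right-hand infimum being the ground state of the unperturbed transformed harmonic oscillator (see~\eqref{eq:finalspec}). Finally, $D_0(b) = e^{-b^2/4} \neq 0$ rules out $\lambda = 0$, giving $\lambda > 0$.

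For~\ref{enum:simpleroots}, suppose for contradiction that $\lambda \in \RR \setminus \NN$ is a zero of $\mathsf{D}$ of order $m \geq 2$. By~\ref{enum:realroots}, $\lambda > 0$, and since $\lambda \notin \NN$, $\Gamma(-\lambda)$ is finite and nonzero; by Lemma~\ref{lem:zerointerference}(a), $\nuP{\lambda} \neq 0$. Hence $\nuM{\nu} = \Gamma(-\nu) [\nuD{\nu}]^2 \nuP{\nu}$ has a zero of order exactly $2m \geq 4$ at $\lambda$. For $z = r \in \RR$ large, Lemma~\ref{lem:approachlem} furnishes $2m$ solutions of $\nuM{\nu} = 1/r^2$ of the form
\[
\nu_j = \lambda + \epsilon \omega^j + E_j, \quad 0 \leq j < 2m, \quad \omega = e^{\pi i / m}, \quad |E_j| \leq 3 B |\epsilon|^2,
\]
with $\epsilon^{2m} = 1/(r^2 c_{2m})$, so $|\epsilon| = O(r^{-1/m})$. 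By Corollary~\ref{cor:Deminob} and Lemma~\ref{lem:selfadj}, each $\nu_j \in \spec(\pcnpert{r}{b}) \subset \RR$. However, $\{\epsilon \omega^j\}_{j = 0}^{2m-1}$ are the vertices of a regular $2m$-gon about the origin, and a pigeonhole argument yields some index $j_0$ with $|\Im (\epsilon \omega^{j_0})| \geq |\epsilon| \cos(\pi/(2m))$; then
\[
|\Im \nu_{j_0}| \geq |\epsilon| \cos(\pi/(2m)) - 3 B |\epsilon|^2 > 0
\]
for $r$ sufficiently large, contradicting $\nu_{j_0} \in \RR$. Hence $m = 1$.

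For~\ref{enum:poscoeff}, $\lambda$ is now a simple zero of $\mathsf{D}$ and a double zero of $\mathsf{M}$, so $c_2 \neq 0$. By Corollary~\ref{cor:Deminob}, $\mathsf{M}$ is real on $\RR$, so all Taylor coefficients of $\mathsf{M}$ at the real point $\lambda$ are real; in particular $c_2 \in \RR$. Suppose $c_2 < 0$; then for $z = r \in \RR$ large, Lemma~\ref{lem:approachlem} (with $m = 1$, $\omega = -1$) yields two solutions $\nu_j = \lambda + \epsilon \omega^j + E_j$ ($j = 0, 1$) with $\epsilon^2 = 1/(r^2 c_2) < 0$; hence $\epsilon$ is purely imaginary with $|\epsilon| = r^{-1} |c_2|^{-1/2}$, so $|\Im \nu_j| \geq |\epsilon| - 3 B |\epsilon|^2 > 0$ for large $r$, contradicting Lemma~\ref{lem:selfadj}. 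Therefore $c_2 > 0$. The chief subtlety throughout is the clean self-adjoint setup in~\ref{enum:realroots} (verifying the limit-point condition at $+\infty$ and matching $H_b$-eigenvalues with $\mathsf{D}$-zeros); once that is in place, parts~\ref{enum:simpleroots} and~\ref{enum:poscoeff} reduce to the angular observation that a regular $2m$-gon cannot sit on $\RR$ for $2m \geq 4$, and that the two-vertex configuration lies on $\RR$ only when the squared radius is positive.
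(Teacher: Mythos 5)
Your parts (ii) and (iii) follow essentially the paper's own route: assume a zero of order $m\geq 2$ (resp.\ $c_2<0$), apply Lemma~\ref{lem:approachlem} with real $z=r\gg 1$, and contradict the reality of $\spec(\pcnpert{r}{b})$ guaranteed by Lemma~\ref{lem:selfadj} via Corollary~\ref{cor:Deminob}; your angular bound $\abs{\Im(\epsilon\omega^{j_0})}\geq\abs{\epsilon}\cos(\pi/(2m))$ is just a quantitative version of the paper's remark that at most two of the $2m$ points $\lambda+\epsilon\omega^j+E_j$ can be real. (One detail you gloss over in the phrase ``each $\nu_j\in\spec(\pcnpert{r}{b})$'': Corollary~\ref{cor:Deminob} applies only to $\nu\notin\Nz$; this is harmless, since the solution used for the contradiction is non-real, hence not in $\Nz$, or one may simply take $\rho<\operatorname{dist}(\lambda,\Nz)$.) Part (i) is where you genuinely diverge. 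The paper proves reality by letting real $r\to\infty$: Lemma~\ref{lem:approachlem} places eigenvalues of the self-adjoint operator $\pcnpert{r}{b}$ within an arbitrary $\rho$ of $\lambda$, forcing $\lambda\in\RR$, and then excludes $\lambda\leq 0$ via the integral representation \eqref{eq:dnuintegrals} (positive integrand for $\nu<0$) together with $D_0(b)\neq 0$. You instead realize the zeros of $\nuD{\nu}$ as eigenvalues of the half-line Dirichlet operator $H_b$ on $L^2([b,\infty))$ --- legitimate, because the Weber equation is limit point at $+\infty$, so up to scalars $D_{\nu}$ is the only $L^2$ solution there (the growth asymptotics quoted in the proof of Lemma~\ref{lem:types} give exactly this) --- and you obtain $\lambda\geq 0$ by zero-extension of form-domain elements into $\ourform{0}{b}$, whose bottom is $0$ by \eqref{eq:finalspec}, excluding $\lambda=0$ by $D_0(b)=e^{-b^2/4}\neq 0$. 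Both arguments are correct. The paper's choice buys economy, reusing Lemma~\ref{lem:approachlem} and Lemma~\ref{lem:selfadj}, which are needed elsewhere anyway, at the cost of invoking the integral representation; yours buys a conceptually transparent reason why the zeros are real and positive (they are Dirichlet eigenvalues dominated from below by the whole-line ground state), at the cost of setting up the Sturm--Liouville apparatus (limit-point classification, identification of eigenfunctions with $D_{\nu}$, zero-extension of the form), which must be checked but poses no real difficulty.
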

\begin{proof}[Proof, Part~\ref{enum:realroots}]  Fix $\lambda$ such that $\nuD{\lambda} = 0$. If $\lambda \in \NN$, we know that $\lambda$ is real, and we know that $\nuD{0} = D_0(b) = e^{-b^2/4}$ is nonzero, so we reduce to the case where $\lambda \in \CC \setminus \Nz$.   Fix $\rho > 0$.  By Lemma~\ref{lem:approachlem}, for $r > 0$ large enough such that $\frac{1}{r} < \delta(\lambda, \rho)$, there exist several solutions of $\nuM{\nu} = \frac{1}{r^2}$ within $\rho$ of $\lambda$.  Yet these solutions cannot be nonnegative integers, for $\nuM{\nu}$ is not defined on the nonnegative integers.  Therefore, by Corollary~\ref{cor:Deminob}, these solutions are eigenvalues of $\pcnpert{r}{b}$, and by Lemma~\ref{lem:selfadj}, such eigenvalues must be real numbers.  Therefore, we have that the distance from $\lambda$ to the real line is bounded above by the distance from $\lambda$ to $\spec{\pcnpert{r}{b}}$, which is at most $\rho$ for sufficiently large $r$.  As $\rho > 0$ is arbitrary, we must have $\lambda \in \RR$.  

Moreover, by the integral representation from \eqref{eq:dnuintegrals} (e.g., \cite[Section 8.1.4, p. 328]{MOS}), if $\nu < 0$, then 
\[
\nuD{\nu} = \frac{ e^{ - b^2/4}}{\Gamma(-\nu)} \int_0^{\infty} t^{-\nu - 1} e^{-t^2/2  - bt} \, dt,
\]
which is a nonzero constant times the integral of a positive integrand, and hence is nonzero.  Therefore, the zeroes of $\nuD{\nu}$ avoid the negative real axis, and as already noted, $0$ is not a zero of $\nuD{\nu}$, so $\lambda > 0$.  
\end{proof}
\begin{proof}[Proof, Part~\ref{enum:simpleroots}]  Assume $m > 1$, $2m > 2$.  By Lemma~\ref{lem:approachlem}, see  \eqref{eq:maineqnflip}, even if $z$ is real, with
\begin{equation} \label{eq:powerseriesredo}
\nuM{\nu} = c_{2m} (\nu - \lambda )^{2m} (1 + g(\nu - \lambda )), \quad c_{2m} \neq 0,
\end{equation}
\begin{equation} \label{eq:epsilondef}
\epsilon^{2m} = \frac{1}{z^2 c_{2m}}, \text{ or }\epsilon = \left( \frac{1}{z^2} \right) ^{1/2m} c_{2m}^{-1/2m}, \quad \abs{z} \gg 1,
\end{equation}
($c_{2m}^{-1/2m}$ is \emph{any} $2m$th root of $c_{2m}^{-1}$), we have eigenvalues
\begin{equation} \label{eq:localrootexpansionredo}
\begin{split}
\nu_j &= \lambda + \epsilon \omega^j + E_j,  \quad \abs{E_j} \leq 3 B \abs{\epsilon}^2, \quad 0 \leq j < 2m\\
& = \lambda + \epsilon \omega^j ( 1 + \tau_j \epsilon), \quad \abs{\tau_j} \leq 3 B \abs{\epsilon}, \quad 0 \leq j < 2m
\end{split} 
\end{equation}
Therefore, 
\begin{equation}
\arg \left( \nu_j - \lambda \right) = \arg(\epsilon) +  \frac{2\pi}{2m} j + \sigma_j \quad \abs{\sigma_j} \leq 6 B \abs{\epsilon}, \quad 0 \leq j < 2m.
\end{equation}
and for $\epsilon$, $\abs{\epsilon} < \frac{1}{12B} \cdot \frac{\pi}{m}$, at most $2$ numbers in $\bracepair{\nu_j}_{j = 0}^{2m - 1}$, $\nu_j \in \eqref{eq:localrootexpansionredo} $, are real.  With $2m \geq 4 > 2$ we would get a non-real eigenvalue of the self-adjoint operator $\pcnpert{z}{b}$.  This contradiction implies that $m = 1$.  
\end{proof}
\begin{proof}[Proof, Part~\ref{enum:poscoeff}]  
Moreover, the same analysis shows that for $m = 1$ the coefficient $c_2 \neq 0$ should be positive: $c_2 > 0$.  Otherwise (i.e., with $c_2 < 0$), by \eqref{eq:epsilondef} with positive $z > 0$, $z \gg 1$, 
\begin{equation} \label{eq:epssquared}
\epsilon^2 = \frac{1}{z^2 c_2}, \text{ and } \epsilon = \pm \frac{1}{\sqrt{c_2}} \cdot \frac{1}{z}  \text{ is \emph{not} real.} 
\end{equation} 
 By \eqref{eq:localrootexpansionredo}
\begin{subequations} \label{eq:nupair}
\begin{align} 
\nu_0 & = \lambda + \epsilon + E_0 = \lambda + \epsilon (1 + s_0)\\
\nu_1 & = \lambda - \epsilon + E_1 = \lambda - \epsilon (1 + s_1),
\end{align}
\end{subequations}
where
\begin{equation} \label{eq:relerrorsize}
\abs{s_j} \leq \left( \frac{3B}{\abs{c_2}^{1/2}} \right) \cdot \frac{1}{\abs{z}}.
\end{equation}
For $z > 1$ large enough, both eigenvalues $\nu_0$, $\nu_1$ would not be real.  

\end{proof}
\subsection{Case: Pure Imaginary perturbation}
We now discuss $\pcnpert{ir}{b}$, $r \in \RR$, $b > 0$, as in \eqref{eq:pcnpertdef}.    
\begin{prop} \label{prop:nonrealLocal}
Fix $\lambda \in \RR_{> 0} \setminus \Nz$ such that $\nuD{\lambda} = 0$, and fix $\rho > 0$.  For $r > \frac{1}{\delta(\lambda, \rho)}$, there exists $2$ nonreal eigenvalues of $\pcnpert{ir}{b}$ in a radius-$\rho$ neighborhood of $\lambda$.
\end{prop}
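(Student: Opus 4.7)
The plan is to combine the general localization result, Lemma~\ref{lem:approachlem}, with the sign information about the leading coefficient $c_2$ supplied by Proposition~\ref{prop:realconsequences}~\ref{enum:poscoeff}, and read off that the two solutions produced by the lemma cannot be real.

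First, since $\lambda \in \RR_{>0} \setminus \Nz$ and $\mathsf{D}(\lambda) = 0$, Proposition~\ref{prop:realconsequences}~\ref{enum:simpleroots} forces $\lambda$ to be a \emph{simple} zero of $\mathsf{D}$, i.e.\ $m = 1$. Since $\Gamma(-\lambda) \neq 0, \infty$ (as $\lambda \notin \Nz$) and $\nuP{\lambda} \neq 0$ by Lemma~\ref{lem:commonzeroesinN}, we have that $\lambda$ is a zero of $\mathsf{M}$ of order exactly $2$. Hence the local expansion takes the form
\[
\nuM{\nu} = c_2 (\nu - \lambda)^2 \bigl(1 + g(\nu - \lambda)\bigr), \quad c_2 \neq 0,
\]
and by Proposition~\ref{prop:realconsequences}~\ref{enum:poscoeff}, $c_2 > 0$.

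Now apply Lemma~\ref{lem:approachlem} to this $\lambda$ with $z = ir$ (and the same $\rho$). Provided $r > 1/\delta(\lambda,\rho)$, the lemma produces $2m = 2$ solutions of $\nuM{\nu} = 1/z^2 = -1/r^2$ inside the radius-$\rho$ neighborhood of $\lambda$. Since $\nuM{\nu}$ is not defined at nonnegative integers, these solutions lie in $\CC \setminus \Nz$, and by Corollary~\ref{cor:Deminob} they are eigenvalues of $\pcnpert{ir}{b}$. With $p = 2$, the formula for $\epsilon$ in Lemma~\ref{lem:approachlem} becomes
\[
\epsilon^2 \,=\, \frac{1}{(ir)^2\, c_2} \,=\, - \frac{1}{r^2 c_2},
\]
and because $c_2 > 0$ and $r \in \RR$, every square root $\epsilon$ of $-1/(r^2 c_2)$ is purely imaginary: $\epsilon = \pm\, i/(r\sqrt{c_2})$. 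The two eigenvalues produced by the lemma are
\[
\nu_0 = \lambda + \epsilon + E_0, \qquad \nu_1 = \lambda - \epsilon + E_1, \qquad |E_j| \leq 3B \abs{\epsilon}^2.
\]

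It remains only to check non-reality. Since $\lambda \in \RR$ and $\Re(\epsilon) = 0$, we have $\Im(\nu_0) = \Im(\epsilon) + \Im(E_0)$, and
\[
\bigl|\Im(\nu_0)\bigr| \;\geq\; \bigl|\Im(\epsilon)\bigr| - |E_0| \;\geq\; \frac{1}{r\sqrt{c_2}} \;-\; 3B\cdot \frac{1}{r^2 c_2} \;=\; \frac{1}{r\sqrt{c_2}}\left(1 - \frac{3B}{r\sqrt{c_2}}\right),
\]
and similarly for $\nu_1$. For $r$ sufficiently large (which we may absorb into the existing constraint by shrinking $\delta(\lambda,\rho)$ if necessary), the factor in parentheses is positive, so $\Im(\nu_j) \neq 0$ for $j = 0, 1$. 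Thus both eigenvalues are non-real, completing the proof. The only step requiring real care is the comparison $|\Im(\epsilon)| \gg |E_j|$ as $r \to \infty$, and this is immediate from the orders in $r$ ($1/r$ versus $1/r^2$) already baked into Lemma~\ref{lem:approachlem}; there is no genuine obstacle.
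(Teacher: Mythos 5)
Your proposal is correct and follows essentially the same route as the paper: invoke the real-perturbation results (simplicity of the zero and $c_2>0$ from Proposition~\ref{prop:realconsequences}), apply Lemma~\ref{lem:approachlem} with $z=ir$ so that $\epsilon=\pm i/(r\sqrt{c_2})$ is purely imaginary, and conclude non-reality because the errors $E_j$ are $O(\abs{\epsilon}^2)$, i.e.\ of lower order than $\Im\epsilon$. Your explicit estimate $\abs{\Im\nu_j}\geq\abs{\Im\epsilon}-\abs{E_j}$ just makes quantitative the step the paper states as ``for $r$ large enough, the solutions are non-real.''
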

\begin{proof}
We repeat the above analysis.  Let $z = ir$; then \eqref{eq:epssquared} becomes
\begin{equation}
\epsilon^2 = \frac{1}{z^2 c_{2}} = - \, \frac{1}{r^2 c_{2}}, \text{ and }   \epsilon = \pm \frac{1}{\sqrt{c_2}} \cdot \frac{i}{r} \text{ is \emph{not} real, by }\sqrt{c_2} \text{ real.}
\end{equation}  
Then \eqref{eq:nupair} and \eqref{eq:relerrorsize} still hold; hence, for $r$ large enough, the solutions of $\nuM{\nu} = \frac{1}{(ir)^2}$ near $\lambda$ are non-real.  Yet by Corollary~\ref{cor:Deminob}, these solutions are eigenvalues of $\pcnpert{ir}{b}$.  By Lemma~\ref{lem:approachlem}, for $r$ large enough, the non-real eigenvalues are within  $\rho$ of $\lambda$.    
\end{proof}

\section{Eigenvalue Localization around Integer Zeros of \texorpdfstring{$\mathsf{D}$}{D}}  \label{sec:nuInt}
\subsection{One Guaranteed Zero, and Several Nearby Zeros}   Although $D_0(x) = e^{-x^2/4}$ is never zero, it is certainly possible that $D_{n}(b) = 0$ for $n \in \NN$; indeed, $D_2(1) = 0$.  

By Corollary~\ref{cor:Deminob}, \eqref{eq:condrelabel},  $\nu \in \spec{\pcnpert{z}{b}}$ if and only if
\[
\frac{1}{\Gamma(- \nu)} - z^2 \nuD{\nu} \nuP{\nu} = 0.
\]
If $\nuD{n} = 0$, then $n \in \spec{\pcnpert{z}{b}} = 0$ for all $z$, for the left-hand-side of the above equation becomes 
\[
\frac{1}{\Gamma(-n)} - z^2 \nuD{n} \nuP{n} = 0 - z^2 \cdot 0 = 0.
\]
\begin{lem}
If $n \in \NN$ is a zero of $\mathsf{D}$, then $n \in \spec{\pcnpert{z}{b}}$ for all $z \in \CC$.
\end{lem}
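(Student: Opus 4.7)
The plan is to apply the equivalence \eqref{eq:condrelabel} from Corollary~\ref{cor:Deminob}, which states that for any $z \in \CC$, $\nu \in \spec{\pcnpert{z}{b}}$ if and only if
\[
\frac{1}{\Gamma(-\nu)} - z^2 \nuD{\nu} \nuP{\nu} = 0.
\]
This condition is valid at every $\nu \in \CC$, including at the positive integers; in contrast, the reciprocal form $\nuM{\nu} = 1/z^2$ used in Section~\ref{sec:nuNotInt} is only meaningful on $\CC \setminus \Nz$, because passing to the reciprocal requires dividing by $\Gamma(-\nu)$, which has poles on $\Nz$. For the integer case we must therefore work with \eqref{eq:condrelabel} itself.

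The verification is then a direct substitution. Fix $z \in \CC$ arbitrarily, and plug $\nu = n$ into the left-hand side of \eqref{eq:condrelabel}. First, $1/\Gamma(-n) = 0$ because $\Gamma$ has a simple pole at the nonpositive integer $-n$, as recorded in Section~\ref{sec:prelimGam}. Second, by the hypothesis $\nuD{n} = 0$, the term $z^2 \nuD{n} \nuP{n}$ vanishes regardless of the values of $z$ and $\nuP{n}$. Both summands being zero, the equation holds, so by the ``if'' direction of Corollary~\ref{cor:Deminob} we conclude $n \in \spec{\pcnpert{z}{b}}$. Since $z \in \CC$ was arbitrary, this yields the claim for every $z$.

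There is no serious obstacle; the lemma is essentially immediate from the eigenvalue characterization already developed. The content is conceptual rather than technical: a zero of $\mathsf{D}$ with integer index gives rise to a ``persistent'' eigenvalue of $\pcnpert{z}{b}$, independent of the coupling constant $z$. Intuitively, the corresponding parabolic cylinder function $D_n(x)$ (essentially a rescaled Hermite function) happens to vanish at the perturbation sites $\pm b$, so the $\delta$-interactions cannot couple to this mode, and it remains an eigenfunction of $\pcnpert{z}{b}$ for every choice of $z$. These $z$-independent eigenvalues will thus have to be set aside when we later count \emph{non-real} eigenvalues produced by the perturbation.
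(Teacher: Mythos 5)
Your proof is correct and follows the same route as the paper: evaluating the eigenvalue condition \eqref{eq:condrelabel} at $\nu = n$, where $1/\Gamma(-n) = 0$ and $\nuD{n} = 0$ make the left-hand side vanish for every $z$. The closing remark about $D_n(\pm b) = 0$ decoupling the $\delta$-interactions is accurate intuition but not needed for the argument.
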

We claim, however, that neighborhoods of such integer zeros of $\mathsf{D}$ work in much the same way as non-integer zeros of $\mathsf{D}$ as a starting point for finding non-real eigenvalues of $\spec{\pcnpert{ir}{b}}$.  
\begin{figure}
\centerline{%
\includegraphics[scale = 0.5]{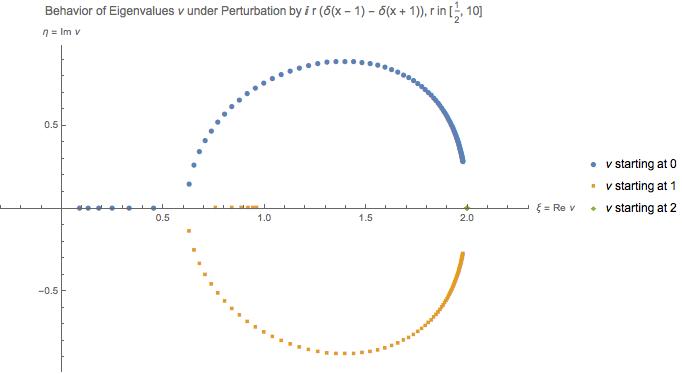}
} 
\caption{A graph of several eigenvalues of $\pcnpert{ir}{1}$ for $r \in \bracepair*{\frac{1}{2} + \frac{j}{10}: 0 \leq j \leq 95}$.  As $r$ increases, two of the eigenvalues are close to $n = 2$, but are non-real.}
\label{fig:intNuRoot} 
\end{figure}
Numerical evidence for this in the case $b = 1$ is given by Figure~\ref{fig:intNuRoot}.

The key observation is the second part of Lemma~\ref{lem:zerointerference}, revised here as
\begin{lem} \label{lem:zerocombineLoc}
If $n \in \NN$, and $\nuD{n} = 0$, then $\nuP{n}= 0$.  
\end{lem}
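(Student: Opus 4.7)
The statement is essentially a reformulation of Lemma~\ref{lem:zerointerference}(b) in the streamlined notation introduced in \eqref{eq:removeb}, so the plan is short. First, I would unpack the definition: by \eqref{eq:removeb},
\[
\nuP{n} = \sqrt{\tfrac{2}{\pi}}\, \evensol{n}{b}\, \oddsol{n}{b},
\]
so to conclude $\nuP{n} = 0$ it is enough to show that at least one of the factors $\evensol{n}{b}$, $\oddsol{n}{b}$ vanishes.

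Second, I would simply apply Lemma~\ref{lem:zerointerference}(b) with $x_0 = b$. The hypothesis $n \in \NN$ of the current lemma gives $n \in \Nz$, and by assumption $D_n(b) = \nuD{n} = 0$. Part (b) of that lemma then tells us that \emph{exactly one} of $\evensol{n}{b}$, $\oddsol{n}{b}$ equals zero, which is stronger than what we need. The product is therefore zero, so $\nuP{n} = 0$.

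There is no real obstacle here; the only thing to verify is that $n = 0$ need not be treated separately, and indeed $D_0(b) = e^{-b^2/4} \neq 0$ (as already observed in the proof of Corollary~\ref{cor:Demiwithb}), so the case $n = 0$ cannot even occur under the hypothesis $\nuD{n} = 0$. The content of this lemma is purely the translation of the earlier ``(non)interference'' result into the $\Phi$-notation, packaged for use in the coming eigenvalue-localization argument around integer zeros of $\mathsf{D}$.
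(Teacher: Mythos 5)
Your proposal is correct and matches the paper's own treatment: the paper presents this lemma precisely as a restatement of Lemma~\ref{lem:zerointerference}(b) at $x_0 = b$ in the notation of \eqref{eq:removeb}, and the conclusion $\nuP{n}=0$ follows because one of the factors $\evensol{n}{b}$, $\oddsol{n}{b}$ vanishes. The side remark about $n=0$ being impossible is accurate but not needed.
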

Thus, if $n \in \NN$ is a zero of $\mathsf{D}$ of order $m \geq 1$, then $n$ is a zero of $\Phi$ of order $q \geq 1$.  Then $[\nuD{\nu}]^2\nuP{\nu}$ has a zero of order at least $2m + q \geq 3$ at $\nu = n$, and thus $\nuM{\nu} = \Gamma(-\nu) [\nuD{\nu}]^2\nuP{\nu} $ (see \eqref{eq:maineqnelab}) has a removable singularity at $n$ and can be analytically continued there, with a zero of order  $2m + q - 1 \geq 2$ at $n$.  We call this extended function $\mathbf{M}(\nu)$ for clarity; write this function locally as
\begin{equation}
\mathbf{M}(\nu) = c_{2m + q - 1} (\nu - n)^{2m + q - 1}(1 + g(\nu - n)), \quad c_{2m + q - 1} \neq 0,
\end{equation}
where $g(0) = 0$ and $g$ is analytic in some disk; fix $R > 0$ such that $\abs{g(\xi)} \leq \frac{1}{4}$ whenever $\abs{\xi} \leq R$, and let $B$ the maximum of $g^{\prime}(\xi)$ on this interval.  We now establish the analogue of Lemma~\ref{lem:approachlem} for $\mathbf{M}(\nu)$. 
\begin{lem} \label{lem:localrootsInteger} 
Fix $\rho$, $0 < \rho < \frac{1}{2}$, and suppose that $\nuD{n} = 0$ for $n \in \NN$.  Then there exists a constant $\delta(n, \rho)$ such that if $\abs{z} > \frac{1}{\delta(n)}$, there exist $2m + q - 1$ solutions of $\nuM{\nu} = \frac{1}{z^2}$ (i.e., \eqref{eq:maineqnflip}), within the radius - $\rho$ - neighborhood of $\lambda$; more precisely, if $\epsilon \in \CC$ is defined such that $\epsilon^{2m + q - 1} = \frac{1}{z^2 c_{2m + q - 1}}$, then the $2m + q - 1$ solutions are
\begin{equation}
\begin{split}
\nu_j &= n + \epsilon \omega^j + E_j, \quad 0 \leq j < 2m + q - 1, \\
 \omega &= \exp \left( \frac{2\pi}{2m + q - 1} \right), \quad \abs{E_j} \leq 3 B \abs{\epsilon}^2.  
 \end{split}
\end{equation}
In particular, 
\begin{equation} \label{eq:permissiblerangeInteger}
\frac{1}{2} \abs{\epsilon} \leq \abs{\nu_j - n} \leq 2 \abs{\epsilon} < \frac{1}{2} \text{ for all }j, \, \,0 \leq j < 2m + q - 1.  
\end{equation}
\end{lem}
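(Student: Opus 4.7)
The proof runs on exactly the same skeleton as Lemma~\ref{lem:approachlem}, with the integer $p := 2m + q - 1$ playing the role of $2m$. Combining Lemma~\ref{lem:zerocombineLoc} with the simple pole of $\Gamma(-\nu)$ at $n$, the product $\Gamma(-\nu)[\nuD{\nu}]^2\nuP{\nu}$ has a removable singularity at $n$, and its analytic continuation $\mathbf{M}(\nu)$ admits the local factorization
$$\mathbf{M}(\nu) = c_p (\nu - n)^p (1 + g(\nu - n)), \quad c_p \neq 0, \quad p \geq 2,$$
with $g$ analytic on $\{|\xi| \leq R\}$, $g(0) = 0$, and $|g| \leq 1/4$, $|g^{\prime}| \leq B$ there.

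For $\nu$ near but not equal to $n$, the equation $\nuM{\nu} = 1/z^2$ is equivalent to $\mathbf{M}(\nu) = 1/z^2$, which is exactly the $p$-th power equation $c_p \zeta^p (1 + g(\zeta)) = w^p$ of \eqref{eq:pthpowerformula} with $\zeta = \nu - n$ and $w^p = 1/z^2$. I then carry over verbatim the analysis from the proof of Lemma~\ref{lem:approachlem}: split into the $p$ branches parametrised by $\omega^j$, $\omega = \exp(2\pi i/p)$; make the change of variables $\mathsf{X}(\zeta) = \zeta \exp(h(\zeta)/p)$ with $h = \log(1 + g)$; and invoke the Banach fixed-point theorem for $\Psi(\zeta; \eta) = \eta\, e^{-h(\zeta)/p}$ on a disk of radius $\kappa = \tfrac{1}{4(1+B)}$ as soon as $|\eta|$ is small enough. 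The bounds \eqref{eq:zetabounds} transfer without change and yield
$$\nu_j = n + \epsilon\,\omega^j + E_j, \quad |E_j| \leq 3B|\epsilon|^2, \quad 0 \leq j < p,$$
where $\epsilon^p = 1/(z^2 c_p)$. Enforcing $|\epsilon| \leq \min\{1/(6B),\, \rho/3\}$ reproduces \eqref{eq:permissiblerangeInteger} and translates into a threshold $|z| > 1/\delta(n,\rho)$, exactly as in Lemma~\ref{lem:approachlem}. Since $|\nu_j - n| \geq |\epsilon|/2 > 0$, each $\nu_j$ lies in the domain of $\nuM$, where $\nuM$ and $\mathbf{M}$ agree, so the $\nu_j$ are genuine solutions of $\nuM{\nu} = 1/z^2$.

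The only step that is qualitatively different from Lemma~\ref{lem:approachlem} --- and hence the main obstacle --- is the order count at $n$. One must verify that the vanishing of $[\nuD{\nu}]^2\nuP{\nu}$ at the integer $n$ is strictly greater than the order of the pole of $\Gamma(-\nu)$ there, so that $\mathbf{M}$ carries an honest zero (of order at least $2$) rather than a pole or a nonzero value. This is precisely the content of Lemma~\ref{lem:zerocombineLoc}, which supplies the extra factor $\nuP{\nu}$ with order $q \geq 1$ on top of the order $2m \geq 2$ already contributed by $[\nuD{\nu}]^2$; without this companion zero the contraction argument would not apply and the localisation would collapse. Once this bookkeeping is in place, the remainder of the proof is a direct transcription of Lemma~\ref{lem:approachlem} with $p = 2m + q - 1$.
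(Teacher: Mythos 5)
Your proposal is correct and takes essentially the same route as the paper: the order count $2m+q-1$ via Lemma~\ref{lem:zerocombineLoc}, the local factorization of $\mathbf{M}$, a verbatim rerun of the contraction argument of Lemma~\ref{lem:approachlem} with $p = 2m+q-1$, and then a return to the unextended equation. One small sharpening: in that last step, cite not only $\abs{\nu_j - n} \geq \tfrac{1}{2}\abs{\epsilon} > 0$ but also $\abs{\nu_j - n} < \rho < \tfrac{1}{2}$, since $\nuM{\nu}$ is undefined at \emph{every} nonnegative integer, not just at $n$; your bound \eqref{eq:permissiblerangeInteger} already supplies this, exactly as in the paper's argument.
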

\begin{proof}
By repeating the proof of Lemma~\ref{lem:approachlem}, for sufficiently large $z$, there exist $2m + q - 1$ solutions of $\mathbf{M}(\nu) = \frac{1}{z^2}$ with the above properties.  However, by the first inequality of \eqref{eq:permissiblerangeInteger}, none of the solutions $\nu_j$ are equal to $n$ and by the latter two inequalities, $\abs{\nu_j - n} < \rho < \frac{1}{2} < 1$, no solution $\nu_j$ can equal any integer other than $n$.  Therefore, no $\nu_j$ is integer, which means that the solutions are solutions of the \emph{unextended} equation $\nuM{\nu} = \frac{1}{z^2}$ as well.  
\end{proof}

\subsection{Case: Real Perturbation} 
We now prove the analogue of Proposition~\ref{prop:realconsequences} for integer zeros of $\mathsf{D}$.  

\begin{prop} \label{prop:realconsequencesN}  
\begin{enumerate}[label = (\roman*)]
\item If $n \in \NN$ is a zero of $\mathsf{D}$ of order $m$, and it is a zero of $\Phi$ of order $q$, then both zeros are simple; i.e., $\mathbf{M}$ has a zero of order exactly $2$ there.  
\item If $n \in \NN$ is a zero of $\mathsf{D}$, then in the series expansion
\[
\mathbf{M}(\nu)= c_2(\nu - n)^2 + \bigo{(\nu - n)^3}, 
\]
$c_2 > 0$.  
\end{enumerate}
\end{prop}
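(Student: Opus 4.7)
The plan is to mirror the proof of Proposition~\ref{prop:realconsequences}, using Lemma~\ref{lem:localrootsInteger} (which produces $2m+q-1$ solutions of $\nuM{\nu} = 1/z^2$ near $n$) in place of Lemma~\ref{lem:approachlem}, and then leveraging self-adjointness of $\pcnpert{r}{b}$ for real $r$ (Lemma~\ref{lem:selfadj}) to force structural constraints on the local expansion of $\mathbf{M}$.

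For Part~(i), I would argue by contradiction: assume $2m + q - 1 \geq 3$. Fix a small $\rho < 1/2$ and apply Lemma~\ref{lem:localrootsInteger} with $z = r$ real and $r > 1/\delta(n,\rho)$ large. This yields $2m+q-1$ solutions
\[
\nu_j = n + \epsilon \omega^j(1 + \tau_j \epsilon), \quad \omega = \exp\!\left(\tfrac{2\pi i}{2m+q-1}\right), \quad |\tau_j| \leq 3B|\epsilon|,
\]
of $\nuM{\nu} = 1/r^2$; by the strict inequalities in \eqref{eq:permissiblerangeInteger} none of them is an integer, so by Corollary~\ref{cor:Deminob} they are eigenvalues of $\pcnpert{r}{b}$, hence real. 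But
\[
\arg(\nu_j - n) = \arg \epsilon + \tfrac{2\pi j}{2m+q-1} + \sigma_j, \quad |\sigma_j| \leq 6B|\epsilon|,
\]
and once $|\epsilon|$ is small enough (i.e. $r$ is large enough) at most two of these arguments can lie near $0$ or $\pi$, so at least one $\nu_j$ is non-real. Contradiction. Since $m, q \geq 1$, equality $2m+q-1 = 2$ forces $m = q = 1$, and $\mathbf{M}$ has a zero of order exactly $2$ at $n$.

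For Part~(ii), now that the zero of $\mathbf{M}$ at $n$ has order exactly $2$, write $\mathbf{M}(\nu) = c_2(\nu-n)^2(1+g(\nu-n))$ with $c_2 \neq 0$ and apply Lemma~\ref{lem:localrootsInteger} again. Suppose, toward contradiction, that $c_2 < 0$. For real $z = r > 0$ large, the defining relation $\epsilon^2 = 1/(r^2 c_2)$ gives $\epsilon = \pm i/(r\sqrt{|c_2|})$, which is purely imaginary. The two solutions produced by the lemma,
\[
\nu_0 = n + \epsilon(1+s_0), \quad \nu_1 = n - \epsilon(1+s_1), \quad |s_j| \leq \tfrac{3B}{\sqrt{|c_2|}}\cdot\tfrac{1}{r},
\]
are non-integer and hence are eigenvalues of the self-adjoint operator $\pcnpert{r}{b}$, so they must be real. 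But for $r$ sufficiently large the error terms $s_j$ are too small to compensate for the purely imaginary leading term $\pm \epsilon$, so $\nu_0$ and $\nu_1$ are non-real, a contradiction. Hence $c_2 > 0$.

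The main obstacle, such as it is, is purely bookkeeping: one must verify that the solutions furnished by Lemma~\ref{lem:localrootsInteger} really are eigenvalues of the \emph{unextended} eigenvalue problem (i.e., they avoid $\Nz$), so that self-adjointness can be invoked. This is exactly the content of \eqref{eq:permissiblerangeInteger} combined with the restriction $\rho < 1/2$, which separates the cluster around $n$ from all other integers; no new analytic input beyond the previous section is required.
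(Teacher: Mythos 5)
Your proposal is correct and follows essentially the same route as the paper's own proof: apply Lemma~\ref{lem:localrootsInteger} with real $z$, use Corollary~\ref{cor:Deminob} together with self-adjointness (Lemma~\ref{lem:selfadj}) to force the $2m+q-1$ local solutions to be real, and derive the contradictions via the spacing of arguments (for part (i)) and the non-reality of $\epsilon$ when $c_2<0$ (for part (ii)). The bookkeeping point you flag --- that \eqref{eq:permissiblerangeInteger} with $\rho<\frac{1}{2}$ keeps the solutions off $\Nz$ so the unextended eigenvalue condition applies --- is exactly the role that lemma plays in the paper as well.
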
 
\begin{proof}[Proof, part (i)] If $n \in \NN$ is a zero of $\mathsf{D}$ of order $m$, and $n$ is a zero of $\Phi$ of order $q$, then the order of the zero of $\mathbf{M}$ at $n$ is $2m + q - 1$, since $[\nuD{\nu}]^2$ gives a zero of order $2m$ at $n$, and $\Gamma(- \nu)$ takes away an order of the zero.  Thus, let us write $\mathbf{M}$ locally as
\begin{equation} \label{eq:mextendexpand}
\mathbf{M}(\nu) = c_{2m + q - 1} (\nu - n)^{2m + q - 1}(1 + g(\nu - \lambda)), \quad c_{2m + q - 1} \neq 0.  
\end{equation}
Let
\begin{equation} \label{eq:epsilonIntLocal}
\epsilon^{2m + q - 1} = \frac{1}{z^2 c_{2m + q - 1}}, \text{ or } \epsilon = \left( \frac{1}{z^2} \right)^{1/(2m + q - 1)} c_{2m + q - 1}^{-1/(2m + q - 1)}, \abs{z} \gg 1,
\end{equation}
where $c_{2m + q - 1}^{-1/(2m + q - 1)}$ is any $2m  + q - 1$th root of $c_{2m + q - 1}^{-1}$.  By Lemma~\ref{lem:localrootsInteger},  letting $z$ be real, there exist $2m + q - 1$ solutions of $\mathbf{M}(\nu) = \frac{1}{z^2}$ in a radius-$\frac{1}{2}$-neighborhood of $n$, namely
\begin{equation} \label{eq:localsolInt}
\begin{split}
\nu_j &= n + \epsilon \omega^j + E_j, \abs{E_j} \leq 3 B \abs{\epsilon}^2, \quad 0 \leq j < 2m + q - 1,\\
& = n + \epsilon \omega^j (1 + \tau_j \epsilon), \quad \abs{\tau_j} \leq 3 B \abs{\epsilon}, 0 \leq j < 2m + q - 1,
\end{split}
\end{equation}  where $ \omega = \exp \left( \frac{2\pi}{2m + q - 1} \right)$.  By Corollary~\ref{cor:Deminob}, these solutions are eigenvalues of $\pcnpert{z}{b}$, and by Lemma~\ref{lem:selfadj}, they are real.

Therefore, 
\begin{equation}
\arg(\nu_j - n) = \frac{2\pi}{2m + q - 1} j +  \sigma_j, \, \l, \abs{\sigma_j} \leq 6 B \abs{\epsilon}, \, \, 0 \leq j < 2m + q - 1,
\end{equation}
and for $\epsilon$ such that $\abs{\epsilon} < \frac{1}{12B} \cdot \frac{2\pi}{2m + q - 1}$, at most $2$ numbers in $\bracepair{\nu_j}_{j = 0}^{2m + q - 2}$, $\nu_j \in \eqref{eq:localsolInt}$, are real.  With $2m + q - 1 \geq 3 > 2$, we would get a non-real eigenvalue of the self-adjoint operator $\pcnpert{z}{b}$.    This contradiction implies that $2m + q - 1 \leq 2$, and with $m \geq 1$, $q \geq 1$, the only resolution is $m = q = 1$; i.e., both roots are simple.  
\end{proof} 
\begin{proof}[Proof, part (ii)]  For $m = q = 1$, the coefficient $c_2 \neq 0$ should be positive: i.e., $c_2 > 0$.  Otherwise, by \eqref{eq:epsilonIntLocal}, with positive $z$, $z \gg 1$,
\begin{equation} \label{eq:epsnegInt}
\epsilon^2 = \frac{1}{z^2 c_2}, \, \text{ and } \epsilon = \pm \frac{1}{\sqrt{c_2}} \cdot \frac{1}{z} \text{ is \emph{not} real.}
\end{equation}
By \eqref{eq:localsolInt},
\begin{equation} \label{eq:twosplitInt}
\begin{split}
\nu_0 &= n + \epsilon + E_0 = \lambda + \epsilon (1 + s_0)\\
\nu_1 &= n - \epsilon + E_1 = \lambda - \epsilon (1 + s_1)\\
\end{split}
\end{equation}
where 
\begin{equation} \label{eq:errorsInt}
\abs{s_j} \leq \left( \frac{3B}{\abs{c_2}^{1/2}} \right) \cdot \frac{1}{\abs{z}} .
\end{equation}
For $z > 1$ large enough, both solutions $\nu_0$, $\nu_1$ would not be real, but they are real, again by Corollary~\ref{cor:Deminob} and Lemma~\ref{lem:selfadj}.  
\end{proof}

\subsection{Case: Imaginary Perturbation}
\begin{prop} \label{prop:nonrealLocalInt}
If $n \in \NN$ is such that $\nuD{n} = 0$, and $\rho > 0$, then for $r > \frac{1}{\delta(n, \rho)}$, there exists $2$ nonreal eigenvalues of $\pcnpert{ir}{b}$ (see \eqref{eq:pcnpertdef}) in the radius-$\rho$-neighborhood of $n$.   
\end{prop}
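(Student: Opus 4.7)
The plan is to mirror the structure of the proof of Proposition~\ref{prop:nonrealLocal} (the non-integer case), using as structural input Proposition~\ref{prop:realconsequencesN}: at an integer zero $n$ of $\mathsf{D}$, the extended function $\mathbf{M}$ has a zero of order exactly $2$ at $n$, with local expansion
\[
\mathbf{M}(\nu) = c_2 (\nu - n)^2 \bigl(1 + g(\nu - n)\bigr), \quad c_2 > 0.
\]
This places us in formally the same situation as the non-integer case of Section~\ref{sec:nuNotInt}, except that the eigenvalue equation $\nuM{\nu} = 1/z^2$ is to be replaced near $n$ by its analytic continuation $\mathbf{M}(\nu) = 1/z^2$.

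The steps I would carry out are as follows. First, apply Lemma~\ref{lem:localrootsInteger} with $z = ir$ and $2m + q - 1 = 2$. The defining relation for $\epsilon$ becomes
\[
\epsilon^2 = \frac{1}{(ir)^2 c_2} = -\frac{1}{r^2 c_2},
\]
and because $c_2 > 0$, the quantity $\epsilon = \pm i/(r\sqrt{c_2})$ is pure imaginary. For $r > 1/\delta(n, \rho)$ the lemma then provides two solutions
\[
\nu_0 = n + \epsilon(1 + s_0), \qquad \nu_1 = n - \epsilon(1 + s_1), \qquad \abs{s_j} \leq \frac{3B}{\sqrt{c_2}} \cdot \frac{1}{r},
\]
of $\mathbf{M}(\nu) = 1/z^2$ in the $\rho$-neighborhood of $n$; the same lemma guarantees these $\nu_j$ are non-integer, so by Corollary~\ref{cor:Deminob} they are bona fide eigenvalues of $\pcnpert{ir}{b}$.

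The one point that needs to be checked carefully is that both $\nu_j$ actually have nonzero imaginary part. Since $\Im \epsilon = \pm 1/(r\sqrt{c_2})$ and $\abs{\epsilon s_j} \leq 3B/(r^2 c_2)$, I would estimate
\[
\abs{\Im\bigl(\epsilon(1+s_j)\bigr)} \geq \frac{1}{r\sqrt{c_2}} - \frac{3B}{r^2 c_2},
\]
which is strictly positive for $r$ larger than some threshold $r_0 = r_0(n)$. Shrinking $\delta(n, \rho)$ if necessary to incorporate this additional constraint completes the argument.

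I do not anticipate a genuine obstacle here: the substantive analytic work --- establishing the positivity $c_2 > 0$ at integer zeros, and the existence and localization of the correct number of solutions to the extended equation --- has already been carried out in Proposition~\ref{prop:realconsequencesN} and Lemma~\ref{lem:localrootsInteger}. The statement at hand is essentially the imaginary-$z$ counterpart of the real case treated in Proposition~\ref{prop:realconsequencesN}(ii): the sign flip $1/(ir)^2 = -1/r^2$ rotates $\epsilon$ by a factor of $i$, which moves the two eigenvalues off the real axis rather than keeping them on it.
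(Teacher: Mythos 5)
Your proposal is correct and follows essentially the same route as the paper: repeat the real-case analysis of Proposition~\ref{prop:realconsequencesN} with $z = ir$, note that $c_2 > 0$ forces $\epsilon = \pm i/(r\sqrt{c_2})$ to be pure imaginary, and use the error bounds from Lemma~\ref{lem:localrootsInteger} together with Corollary~\ref{cor:Deminob} to conclude the two nearby solutions are non-real eigenvalues. Your explicit lower bound on $\abs{\Im(\epsilon(1+s_j))}$ just spells out the ``for $r$ large enough'' step that the paper leaves implicit.
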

\begin{proof} We repeat the above analysis.  If $ z = ir$, then \eqref{eq:epsnegInt} becomes
\[
\epsilon^2 = \frac{1}{z^2 c_2} = - \, \frac{1}{r^2 c_2}, \text{ and } \epsilon = \pm \frac{1}{\sqrt{c_2}} \cdot \frac{i}{r} \text{ is \emph{not} real, by } \sqrt{c_2} \text{ real.}
\]
Then \eqref{eq:twosplitInt} and \eqref{eq:errorsInt} still hold; hence, for $r$ large enough, the solutions of $\nuM{\nu} = \frac{1}{(ir)^2}$ near $n$ are non-real.  yet by Corollary~\ref{cor:Deminob}, these solutions are eigenvalues of $\pcnpert{ir}{b}$.  
\end{proof}

\section{Proof of Theorem~\ref{thm:infinityHO}} \label{sec:Unify} Consider the family of operators $\pcnpert{re^{i \pi/2}}{b} = \pcnpert{ir}{b}$.  Consider the first $p$ roots $0 < \lambda_1 < \dotsc < \lambda_p$ of $\nuD{\nu}$.  By Lemma~\ref{lem:approachlem} and Proposition~\ref{prop:realconsequences} (or Lemma~\ref{lem:localrootsInteger} and Proposition~\ref{prop:realconsequencesN}), for each $q$, $1 \leq q \leq p$, the equation $\nuM{\nu} = \frac{1}{z^2}$ (or $\mathbf{M}(\nu) = \frac{1}{z^2}$) is of the form
\begin{equation} \label{eq:localexpandFinal}
c_2(\nu - \lambda_q)^2 [1 + g_q(\nu - \lambda_q)] = \frac{1}{(ir)^2} = - \frac{1}{r^2}, \quad c_2 > 0,
\end{equation}
where $g(z)  \leq C_3$ if $\abs{z} \leq \rho_q$, and we may enforce $\rho_q < \frac{1}{2} \min_{2 \leq q \leq p} \lambda_q - \lambda_{q - 1}$.  By Propositions~\ref{prop:nonrealLocal} and \ref{prop:nonrealLocalInt}, for each $q$, $1 \leq q \leq p$, there exists a constant $\delta(\lambda_q, \rho_q)$ such that for $r > \frac{1}{\delta(\lambda_q, \rho_q)}$, there exist $2$ non-real solutions of \eqref{eq:localexpandFinal} in the radius-$\rho_q$-neighborhood of $\lambda_q$ (call them $\kappa_j^{(q)}(r)$), $j = 0, 1$), and by Corollary~\ref{cor:Deminob}, these solutions are eigenvalues of $\spec{\pcnpert{ir}{b}}$.  Moreover, by $\rho_q < \frac{1}{2} \min_{2 \leq q \leq p} \lambda_q - \lambda_{q - 1}$, these neighborhoods are disjoint, so if $q \neq t$, $\kappa_j^{(q)}(r) \neq \kappa_k^{(t)}(r)$, $j, k$ in $\bracepair{0, 1}$.     Therefore, we see that for some $r^*$,
\begin{equation}
r \geq r^* \Rightarrow \card{\spec \pcnpert{ir}{b} \setminus \RR} \geq 2 p.
\end{equation}
This holds for all $p \in \NN$, so letting 
\[
\mathcal{N}_{\textnormal{PC}}(r) = \mathcal{N}_{\textnormal{PC}}(r; b) \declare \card{\spec \pcnpert{ir}{b} \setminus \RR},
\]
we have the following.
\begin{thm} \label{eq:countlarger}
$\lim_{r \to \infty} \mathcal{N}_{\textnormal{PC}}(r; b) = \infty$.
\end{thm}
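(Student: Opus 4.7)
The plan is to assemble the theorem from pieces already built in Sections~\ref{sec:zeroCount}--\ref{sec:nuInt}. All of the genuinely hard analysis --- the infinitude of zeros of $\nuD{\nu}$ (Proposition~\ref{prop:zerocount}), the real positivity of those zeros (Proposition~\ref{prop:realconsequences}(i)), and the local creation of non-real eigenvalues near each such zero under an imaginary perturbation (Propositions~\ref{prop:nonrealLocal} and \ref{prop:nonrealLocalInt}) --- is already in hand. What remains is essentially a counting and bookkeeping argument.

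First, I would fix an arbitrary $p \in \NN$ and list the $p$ smallest zeros
\[
0 < \lambda_1 < \lambda_2 < \cdots < \lambda_p
\]
of $\nuD{\nu}$; their existence as strictly positive reals follows from Propositions~\ref{prop:zerocount} and \ref{prop:realconsequences}(i) (with the observation that $\nuD{0} = e^{-b^2/4} \neq 0$ already built into the latter). Next, I would set
\[
\rho \declare \tfrac{1}{3} \min_{2 \leq q \leq p} (\lambda_q - \lambda_{q-1}) > 0,
\]
so that the open disks $\bracepair{\nu \in \CC : \abs{\nu - \lambda_q} < \rho}$, $1 \leq q \leq p$, are pairwise disjoint.

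For each $q$ I would then apply either Proposition~\ref{prop:nonrealLocal} (in the case $\lambda_q \notin \Nz$) or Proposition~\ref{prop:nonrealLocalInt} (in the case $\lambda_q \in \NN$) to obtain a threshold $r_q \declare 1/\delta(\lambda_q, \rho)$ beyond which the disk around $\lambda_q$ contains at least two non-real points of $\spec \pcnpert{ir}{b}$. Setting $r^{*}(p) \declare \max_{1 \leq q \leq p} r_q$, I would conclude that
\[
r > r^{*}(p) \quad \Longrightarrow \quad \mathcal{N}_{\textnormal{PC}}(r; b) \geq 2p,
\]
since the $p$ disks are disjoint and the eigenvalues contributed by distinct disks are therefore distinct. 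Because $p \in \NN$ is arbitrary, this forces $\mathcal{N}_{\textnormal{PC}}(r;b) \to \infty$ as $r \to \infty$, and Theorem~\ref{thm:infinityHO} follows via the spectral identity \eqref{eq:finalspec}.

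I do not expect a genuine obstacle at this final stage: the two truly delicate points --- the entire-function / Stirling growth argument that yields infinitely many zeros of $\nuD{\nu}$, and the self-adjointness argument that pins down the sign $c_2 > 0$ in the local expansion (and hence forces $\epsilon$ to be purely imaginary when $z = ir$) --- have already been discharged. The only care required in the assembly is to pick a single $\rho$ small enough to separate the first $p$ chosen zeros simultaneously, and to dispatch the integer and non-integer cases with the matching localization proposition. If I were worried about anything, it would only be making sure the passage from ``at least $2$ non-real eigenvalues per disk'' to the global count does not accidentally overcount; the disjointness of the disks settles this at once.
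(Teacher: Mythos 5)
Your proposal is correct and follows essentially the same route as the paper: fix $p$, take the first $p$ (positive, real) zeros of $\nuD{\nu}$ guaranteed by Propositions~\ref{prop:zerocount} and \ref{prop:realconsequences}, choose neighborhood radii small enough that the disks are disjoint, invoke Proposition~\ref{prop:nonrealLocal} or \ref{prop:nonrealLocalInt} for the non-integer and integer cases respectively to get two non-real eigenvalues per disk for all large $r$, and let $p \to \infty$. The only cosmetic difference is your uniform radius $\rho = \tfrac{1}{3}\min_q(\lambda_q - \lambda_{q-1})$ versus the paper's per-zero $\rho_q < \tfrac{1}{2}\min_q(\lambda_q - \lambda_{q-1})$, which changes nothing.
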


By the change of variables in Section~\ref{subsect:changevar}, we establish Theorem~\ref{thm:infinityHO}, as well; i.e., $\lim_{r \to \infty} \mathcal{N}_{\textnormal{HO}}(r) = \infty$.

\section{Conclusion} \label{sec:Conclude} Two general questions should be clarified.

First, we know that
\begin{align}
\mathcal{N}(r) & \leq c (r \log (er))^2 & & \text{\cite[Thm. 4.4, (4.38), p. 4082]{MitPub2015},}\\
\intertext{and}
\lim_{r \to \infty} \mathcal{N}(r) &= \infty && \text{Theorem~\ref{thm:infinityHO}, \eqref{eq:TheTheorem}, p.~\pageref{thm:infinityHO} above.}
\end{align} 
But the gap between the estimates for $\mathcal{N}$ from above and below is too wide.  

Second, how to the eigenvalues $\lambda(ir)$ move, $0 \leq r < \infty$?  Sections~\ref{sec:nuNotInt}, \ref{sec:nuInt} tell a lot about $\lambda(ir)$, $r \gg 1$, close to zeros of $\mathsf{D}$.  But, for example, could some $\lambda_j(ir)$, $0 \leq r \ll 1$, go to $\infty$ when $r \to \infty$?  Numerics hint that it could not happen, but there is no formal (rigorous) argument to explain this phenomenon.
\nocite{BakMit2018}

\printbibliography

\end{document}